\documentclass[a4paper]{amsart}
\usepackage[utf8]{inputenc}
\usepackage[english]{babel}
\usepackage{fancyhdr}
\usepackage{amsmath}
\usepackage{amsfonts,amstext}
\usepackage{amssymb,amsthm,amscd,amsxtra,wasysym,graphicx}
\usepackage{mathrsfs,esint,comment}
\usepackage{tikz-cd}
\usepackage{enumitem,mathtools,dsfont}
\usepackage{palatino,mathpazo}
\usepackage{charter}
\usepackage{xcolor}

\def\Bibtex{{\rm B\kern-.05em{\sc i\kern-.025em b}\kern-0.08em T\kern-.1667em\lower.7ex\hbox{E}\kern-.125emX}}
 \usepackage[breaklinks=true,bookmarks=false,pagebackref]{hyperref}
 \usepackage{hyperref}
\hypersetup{
     unicode=false,          
    pdftoolbar=true,       
    pdfmenubar=true,       
    pdffitwindow=false,     
    pdfstartview={FitH}, 
    pdftitle={Monge-Amp\`ere equations},   
    pdfauthor={Quang-Tuan Dang},   
    colorlinks=true,  
   linkcolor=purple,         
    citecolor=blue,        
    filecolor=green,      
    urlcolor=blue}

\usepackage{cleveref}
\usepackage{indentfirst}
\theoremstyle{plain}
\newtheorem{theorem}{Theorem}
\newtheorem{lemma}[theorem]{Lemma}

\newtheorem{proposition}[theorem]{Proposition}

\theoremstyle{definition}
\newtheorem{definition}[theorem]{Definition}
\newtheorem{remark}[theorem]{Remark}

\theoremstyle{plain}
\newtheorem{bigthm}{Theorem}

\numberwithin{theorem}{section}
\numberwithin{equation}{section}
 
\DeclareMathOperator \PSH {{\rm PSH}}

\DeclareMathOperator \Vol {{\rm Vol}}

\DeclareMathOperator \exph {{\rm exph}}

\DeclareMathOperator \tr {\textrm{tr}}
\newcommand\om{\omega}

\def\om{\omega}

\def\f{\varphi}
\def\dc{dd^c}

\begin{document}
		\title[Regularity of solutions to Monge--Amp\`ere equations]{Regularity of solutions to Monge--Amp\`ere equations on compact Hermitian manifolds}
	\author{Quang-Tuan Dang}

		\address{Yau Mathmatical Sciences Center, Tsinghua University, Beiing, China 100084}
		\email{dangquangtuan10@gmail.com $\&$ dangqt@mail.tsinghua.edu.cn}
	\date{\today}
	\keywords{Complex Monge-Amp\`ere equations, Hermitian manifolds}
	\subjclass[2020]{32U20, 32W20, 32U05}
	\maketitle
\begin{abstract}
    We study the stability and H\"older continuity of solutions to degenerate complex Monge--Ampère equations associated with a (non-closed) big form on compact Hermitian manifolds. We also show that the solution is globally continuous when the reference form is the pullback of a Hermitian metric. As a consequence, we establish a uniform diameter bound for the twisted Chern–Ricci flow.
\end{abstract}
\tableofcontents
\section{Introduction}\label{sect: intro}
Complex Monge--Amp\`ere equations have played a central role in complex geometry since the celebrated work of Yau~\cite{yau1978ricci} solving the Calabi conjecture. The study of such equations on compact Hermitian manifolds, with a fixed Hermitian background metric, was initiated several decades ago and has seen significant progress in recent years; see, for instance,~\cite{cherrier1987equation,hanani1996equation,hanani1996generalisation,tosatti2010estimates,tosatti2010complex,dinew2012pluripotential,kolodziej2015weak,nguyen2016complex} and the references therein. More recently, the degenerate metric setting has been investigated by Guedj--Lu~\cite{guedj2022quasi,guedj2021quasi} and Boucksom--Guedj--Lu \cite{BoucksomGuedjLu2025-volume}, providing a very general framework that encompasses many geometric applications.
The regularity of solutions to complex degenerate complex Monge-Amp\`ere equations has deep implications in both complex dynamics and complex geometry; see, for instance,~\cite{dinh2014characterization,Fu-Guo-Song2020-geometric,Li_yang_2021,GuoPhongTongWang2021-modulus,guo2022-local,guo2022-diameter,GuedjGuenanciaZeriahi25-diameter,Guo2024-diameter2,do2023log,vu24-diameter,nguyen-vu24-diameter} and references therein.

\medskip
Let $X$ be a compact complex $n$-dimensional manifold equipped with a Hermitian metric $\omega_X$. Let $\theta$ be a smooth real (1,1) form on $X$.
We let $\PSH(X,\theta)$ denote the set of $\theta$-plurisubharmonic ($\theta$-psh for short) functions which are defined as being locally the sum of a plurisubharmonic function and a smooth one and any such  function $\f$ satisfies $\theta+\dc \f\geq 0$ in the weak sense of currents. Here, we put $d^c=\frac{i}{2}(\Bar{\partial}-\partial)$ so that $\dc=i\partial\Bar{\partial}$.

We say that $\theta$ is {\em big} if there exists a $\theta$-psh function $\psi_0$ with analytic singularities (see Definition~\ref{def: analytic}) such that $\theta+\dc\psi_0$ dominates a Hermitian form. We let $\Omega$ denote the Zariski open set where $\psi_0$ is locally bounded and $\psi_0=-\infty$ on $\partial\Omega$.

 Following Bedford-Taylor's theory~\cite{bedford1976dirichlet,bedford1982new}, it was shown (see e.g.~\cite{dinew2012pluripotential,kolodziej2015weak}) that for any $\f\in\PSH(X,\theta)\cap L^\infty(\Omega)$
 the complex Monge-Amp\`ere operator $$(\theta+\dc\f)^n$$ is a well-defined positive Borel measure in $\Omega$. 
It is therefore meaningful to study the complex Monge-Amp\`ere equation
\begin{equation}\label{eq: cmae}
    (\theta+\dc\f)^n=c\mu\quad\text{in}\;\Omega,
\end{equation} for a given positive measure $\mu$ and $c$ a normalization constant.

When $\mu = fdV_X$ is absolutely continuous with respect to the Lebesgue measure $dV_X$, with density $f \in L^p(X)$ for some $p>1$, extending the result of~\cite{boucksom2010monge}, it has been known in~\cite[Theorem D]{BoucksomGuedjLu2025-volume} that there exists a solution $(\varphi,c)\in \PSH(X,\theta)\times \mathbb{R}_{>0}$ to~\eqref{eq: cmae}. Moreover, such a solution is continuous on the Zariski open set $\Omega$ (see~\cite[Theorem 3.7]{guedj2021quasi}). In this setting, H\"older continuity is typically the strongest regularity we can expect.

\begin{bigthm} \label{thmA}
    Let $\mu=fdV_X$ be a measure absolutely continuous with respect to Lebesgue measure with density $0\leq f\in L^p(X,dV_X)$,  $p>1$. 
Let $(\varphi,c)\in \PSH(X,\theta)\times (0,+\infty)$ be such that $\sup_X\varphi=0$,  $$V_\theta-C_0\leq\varphi\leq V_\theta,\qquad(\theta+\dc\f)^n=c\mu\; \text{in}\,\Omega,$$ with a uniform constant $C_0>0$. Then $\f$ is H\"older continuous in $\Omega$.
\end{bigthm}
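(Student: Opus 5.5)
The plan is to run the Demailly--Kołodziej regularization scheme, in the form Kołodziej and Nguyen used on Hermitian manifolds, relative to the big form $\theta$, with $\psi_0$ providing the weights that localize to $\Omega$.

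\textbf{Step 1: a stability estimate.} We first want a weak-type stability estimate of the following shape. Let $u,v\in\PSH(X,\theta)$ satisfy $V_\theta-C\le u,v\le V_\theta$, and suppose $(\theta+\dc u)^n=c\,f\,dV_X$ in $\Omega$ with $f\in L^p$. Then
\[
\sup_\Omega (v-u)_+ \le C'\,\|(v-u)_+\|_{L^1(dV_X)}^{\gamma}
\]
for some $\gamma>0$ depending only on $n,p$. To prove it, we compare with the competitor $(1-\delta)v+\delta\psi_0$. Since $\theta+\dc\psi_0\ge\delta_0\omega_X$ on $\Omega$, this competitor is strictly $\theta$-psh with a definite positive part. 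It also tends to $-\infty$ near $\partial\Omega$, so every sublevel comparison set is relatively compact in $\Omega$. On these sets we apply the domination/comparison principle for the non-closed form $\theta$ in Guedj--Lu's form, which costs only a constant factor controlled by the bound on $dd^c\theta$, $d\theta\wedge d^c\theta$. Combined with the capacity estimate $\mu(K)\le C\,\mathrm{Cap}_\theta(K)^{2}$ for $L^p$ densities and a De Giorgi-type iteration as in Kołodziej's $L^\infty$ estimate, this gives the bound above. The two-sided bound $V_\theta-C_0\le\varphi\le V_\theta$ is what makes every constant uniform.

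\textbf{Step 2: regularization inside $\Omega$.} Next we form the Demailly regularization $\rho_\epsilon\varphi$ using the Chern connection of $\omega_X$, together with the Kiselman--Legendre transform
\[
\Phi_{\epsilon}=\inf_{t\in(0,\epsilon]}\Big(\rho_t\varphi+Kt^2-K t-\kappa\log(t/\epsilon)\Big).
\]
This satisfies $\theta+\dc\Phi_\epsilon\ge-(A\kappa+\text{small})\omega_X$, but only on compact subsets of $\Omega$. Indeed, the Hessian loss involves the Lelong-type quantity of $\varphi$, and this blows up near $\partial\Omega$ where $\theta$ is not semipositive. We correct this by setting
\[
u_\epsilon=(1-B\kappa)\Phi_\epsilon+B\kappa\,\psi_0-C\kappa .
\]
The positivity of $\theta+\dc\psi_0$ absorbs the negative part of $\theta+\dc\Phi_\epsilon$. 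Because $\psi_0$ has analytic singularities, it pushes $u_\epsilon$ below $\varphi$ near $\partial\Omega$, so $u_\epsilon\in\PSH(X,\theta)$ after gluing with $\max$.

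\textbf{Step 3: the $L^1$ estimate and conclusion.} The key estimate is
\[
\int_\Omega \big(\rho_\epsilon\varphi-\varphi\big)\,dV_X\le C\epsilon^{2}
\]
locally, with a weight $|\psi_0|^{N}$ near the boundary. We get it from the Laplacian-of-mean-value formula $\int(\rho_\epsilon\varphi-\varphi)\le C\epsilon^2\int \Delta\varphi\wedge\omega_X^{n-1}$, with the total mass of $\theta+\dc\varphi$ bounded by Stokes' theorem up to torsion terms. Feeding $u_\epsilon$ and the $L^1$ bound into Step 1 yields
\[
\rho_\epsilon\varphi-\varphi\le C\epsilon^{\alpha}\,|\psi_0|^{N}\quad\text{on }\Omega .
\]
Demailly's classical lemma converts this into Hölder continuity on each $\{\psi_0>-k\}$, and hence Hölder continuity in $\Omega$ in the local sense, with exponent independent of $k$.

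\textbf{Main obstacle.} The hardest part is Step 1: proving the comparison and domination principle with uniform constants for non-closed $\theta$ that is merely big, on sets touching $\partial\Omega$. The torsion terms break Stokes' theorem. We would handle them using Guedj--Lu's inequality $\mathrm{vol}\ \text{bounded}$ ("condition (B)") for big $\theta$, dominated by a Hermitian form. We would also perturb by $\delta\psi_0$ and let $\delta\to0$ at a polynomial rate, which is what produces the exponent $\gamma$.
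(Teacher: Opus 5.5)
Your architecture is the paper's: an $L^1$-stability estimate, Demailly's regularization with the Kiselman--Legendre transform corrected by a small multiple of $\psi_0$, an $L^1$ bound on $\rho_\epsilon\varphi-\varphi$, and a pointwise conversion. However, the load-bearing Step~1 is not established. You propose to obtain $\sup(v-u)_+\le C'\|(v-u)_+\|_1^\gamma$ from a capacity $\mathrm{Cap}_\theta$, a volume--capacity inequality $\mu\le C\,\mathrm{Cap}_\theta^2$ and a Ko\l odziej-type iteration. For a big, non-closed, non-semipositive $\theta$, none of these tools is available off the shelf: Ko\l odziej--Nguyen's capacity and modified comparison principle are built for a Hermitian $\omega_X$. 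You yourself leave the comparison principle with uniform constants and the control of torsion terms as the unresolved ``main obstacle'', offering only intentions (condition (B), perturbing by $\delta\psi_0$ at a polynomial rate). As written, the central estimate of the proof is asserted, not proved.

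The missing idea is that no capacity is needed; stability follows from an auxiliary subsolution plus the domination principle. First replace $\psi$ by $\max(\psi,\varphi)$, so that $\psi\ge\varphi\ge V_\theta-C_0$. This also removes your restriction to competitors with minimal singularities, which $u_\epsilon$ violates because of its $B\kappa\psi_0$ term. For $\delta>0$, H\"older's inequality shows that $F=\mathbf{1}_{\{\varphi<\psi-\delta\}}f/\mu(\varphi<\psi-\delta)^{1/q}$ is bounded in $L^r$ with $\frac1r=\frac1p(1-\frac1q)+\frac1q$. Lemma~\ref{lem: subsolution} then gives $v\in\PSH(X,\theta)$ with $V_\theta-1\le v\le V_\theta$ and $(\theta+\dc v)^n\ge a\,F\,dV_X$ on $\Omega$, for a uniform $a>0$. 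Set $u=(1-\lambda)\psi+\lambda v$ with $\lambda=(2c/a)^{1/n}\mu(\varphi<\psi-\delta)^{1/(nq)}$ (the case $\lambda\ge1$ is trivial). Then $\{\varphi<u-\delta-C_0\lambda\}\subset\{\varphi<\psi-\delta\}$, and on this set $(\theta+\dc u)^n\ge\lambda^n(\theta+\dc v)^n\ge 2(\theta+\dc\varphi)^n$. The domination principle (Proposition~\ref{prop: domination}) already absorbs the non-closedness of $\theta$ and needs no localization to compact subsets of $\Omega$. It yields $\varphi\ge u-\delta-C_0\lambda$, i.e. $\sup_X(\psi-\varphi)\le\delta+C\mu(\varphi<\psi-\delta)^{1/(nq)}$. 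Combine this with $\mu(\varphi<\psi-\delta)\le\|f\|_p\|(\psi-\varphi)_+\|_1^{(p-1)/p}\delta^{-(p-1)/p}$ and choose $\delta=\|(\psi-\varphi)_+\|_1^{\alpha}$; this gives stability for every exponent below $\frac{p-1}{np+p-1}$.

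Three smaller corrections follow.

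\emph{Step~2, global positivity.} The bound $\theta+\dc\Phi_\epsilon\ge-(A\kappa+2K\epsilon)\omega_X$ holds on all of $X$: the curvature loss is at most $A\kappa$, whatever the singularities of $\varphi$. So $u_\epsilon$ is $\theta$-psh on $X$ with no gluing. Your gluing justification is unsupported anyway, since $\varphi$ is only bounded below by $V_\theta-C_0$ and may itself tend to $-\infty$ along $\partial\Omega$.

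\emph{The $L^1$ bound.} Likewise, $\int_X(\rho_\epsilon\varphi-\varphi)\,dV_X\le C\epsilon^2$ holds globally, without weights.

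\emph{Step~3, the pointwise conversion.} Stability applied to $u_\epsilon$ only bounds $\Phi_\epsilon-\varphi$ from above. Since $\Phi_\epsilon\le\rho_\epsilon\varphi$ points the wrong way, this does not by itself bound $\rho_\epsilon\varphi-\varphi$. You need the pointwise argument of Demailly--Dinew--Guedj--Pham--Ko\l odziej--Zeriahi. At $z\in\Omega$, the infimum defining $\Phi_\epsilon(z)$ is attained at some $t_0(z)$. The facts $\rho_t\varphi+Kt^2\ge\varphi$ and $\varphi\le0$, together with the stability bound, force $t_0(z)\ge\epsilon\,\tau(z)$ with $\tau(z)=e^{C(\psi_0(z)-C')}$. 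Monotonicity of $t\mapsto\rho_t\varphi+Kt^2$, a second use of stability, and replacing $\epsilon$ by $\epsilon/\tau(z)$ then give $\rho_\epsilon\varphi(z)-\varphi(z)\le C(1+|\psi_0(z)|)\tau(z)^{-2\gamma}\epsilon^{2\gamma}$. The resulting weight is exponential in $-\psi_0$, not $|\psi_0|^N$. It is still locally bounded on $\Omega$, which is all that local H\"older continuity requires.
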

Here $V_\theta=\sup\{u\in\PSH(X,\theta): u\leq 0 \}$ is a $\theta$-psh function with minimal singularities.
We note that the constant $c>0$ is uniformly bounded in terms of $n$, $p$, $\omega_X$, and a lower bound for $\int_X f^{1/n} dV_X$; see~\cite[Theorem 4.7]{BoucksomGuedjLu2025-volume}. The proof of Theorem~\ref{thmA} relies on the stability result (cf. Theorem~\ref{thm :stability-big}) together with the arguments of~\cite[Theorem D]{demailly2014holder}, which are based on Demailly’s regularization technique. Our proof of the stability result is based on the use of auxiliary equations, an approach that can also be applied in the local setting.
We refer interested readers to~\cite{guedj2008holder,eyssidieux2009singular,guedj2012stability,kolodziej2018holder,kolodziej2021continuous,GGZ23-continuity,guedj2021quasi1} for the stability estimate obtained via the pluripotential approach, and to~\cite{WWZ20-estimate,GuoPhongTongWang2021-modulus,WZ24-trace,Cheng-Xu24-m-subharmonic,ChengXu2025-viscosity-hessian} and references therein for the stability estimate established by PDE methods.

The higher regularity of solutions on the Zariski open set $\Omega$, when $f$ is smooth, is an important open problem, which remains largely unresolved even in the K\"ahler case; see~\cite{boucksom2010monge}. Under the additional assumption that $\theta$ is semi-positive, Guedj--Lu~\cite[Theorem 4.1]{guedj2021quasi} showed that the solution is smooth on $\Omega$. However, the question of global regularity is still widely open.

\medskip
Next, we extend the results of Dinew--Zhang~\cite{Dinew-Zhang10-stability} and Cho--Choi~\cite{ChoChoi25-continuity} to the Hermitian setting in order to study the global continuity of solutions when $\theta$ is the pullback of a Hermitian metric. More precisely, we prove the following.


\begin{bigthm}\label{thmB}
    Let $V$ be compact complex variety of dimension $n$
with log terminal singularities, equipped a Hermitian form $\omega_V$. Let $\pi:X\to V$ be a log resolution of singularities.   Set $\theta=\pi^*\om_V$. Let $0\leq f\in L^p(X,dV_X)$ for some $p>1$.
Assume that $\varphi\in \PSH(X,\theta)\cap L^\infty(X)$ and $c\in\mathbb R_+$ solve the following complex Monge-Amp\`ere equation  $$\sup_X\varphi=0,\qquad(\theta+\dc\f)^n=cfdV_X.$$
Then $\varphi$ is continuous on $X$.
\end{bigthm}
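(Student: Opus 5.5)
Theorem~\ref{thmB} reduces to showing continuity of $\varphi$ across the exceptional locus $E=\pi^{-1}(V_{\rm sing})$. Outside $E$ the form $\theta=\pi^*\omega_V$ is Hermitian, so $\varphi$ is continuous on $X\setminus E$; this also follows from the Hölder continuity in Theorem~\ref{thmA}, applied with $\Omega=X\setminus E$. Since $\varphi$ is bounded, it equals $V_\theta$ up to a bounded term, and we may assume this normalization. The strategy follows Dinew--Zhang and Cho--Choi: we show that $\varphi$ is constant along each fiber of $\pi$ and that it descends to a continuous function on $V$.

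\textbf{Step 1: push down to $V$.} Every $\theta$-psh function is constant on the connected fibers $\pi^{-1}(v)$, since $V$ is normal and $\theta$ vanishes on the fibers. Hence $\varphi=u\circ\pi$ for a bounded $\omega_V$-psh function $u$ on $V$. The equation then becomes $(\omega_V+dd^c u)^n = c g\, dV$ on $V_{\rm reg}$. Because $V$ is log terminal, the density $g=f\,|J_\pi|^2$ is $L^{p'}$ with respect to an adapted measure on $V$, for some $p'>1$. Continuity of $\varphi$ on $X$ is therefore equivalent to continuity of $u$ on $V$.

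\textbf{Step 2: approximation by continuous solutions.} Let $\pi_\epsilon$ denote the metric $\theta+\epsilon\omega_X$. We smooth $f$ to $f_j\to f$ in $L^p$, and solve the Hermitian equations
\[
(\theta+\epsilon\omega_X+dd^c\varphi_{\epsilon,j})^n=c_{\epsilon,j}f_j\,dV_X .
\]
These have smooth solutions by the Tosatti--Weinkove and Kołodziej--Nguyen theory on $X$, since $\theta+\epsilon\omega_X$ is a Hermitian metric. Uniform $L^\infty$ bounds in $\epsilon$ and $j$ follow from the big-form estimates of Boucksom--Guedj--Lu and Guedj--Lu, because $\theta$ is big and semipositive. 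Each $\varphi_{\epsilon,j}$ is continuous on $X$.

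\textbf{Step 3: uniform convergence.} I will apply the stability estimate Theorem~\ref{thm :stability-big}, in its form for a family of reference forms $\theta+\epsilon\omega_X$, to obtain a bound of the form
\[
\|\varphi_{\epsilon,j}-\varphi\|_{L^\infty(X)}\le C\left(\|f_j-f\|_{L^p}^{\gamma}+\epsilon^{\gamma}\right).
\]
This requires the constants in the stability estimate to stay uniform as $\epsilon\to 0$. In the pullback case, uniformity should follow because the quantities controlling the constants are uniform: $V_{\theta+\epsilon\omega_X}$ is bounded (in fact $V_\theta$ is bounded), and the normalizing constants $c_{\epsilon,j}$ are bounded above and below. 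A uniform limit of continuous functions is continuous, which proves the theorem.

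\textbf{Main obstacle.} The hard part is the $\epsilon$-uniform stability estimate in Step 3. In the non-Kähler setting the mixed terms $dd^c\omega_X$ and $d\omega_X\wedge d^c\omega_X$ spoil the comparison principle. A uniform constant is also needed in the domination inequality $\mathrm{Cap}\le C\,\mathrm{vol}^{\,\cdot}$ across the degenerating family. I would first try to handle these with the auxiliary-equation method used for Theorem~\ref{thm :stability-big}. The idea is to compare $\varphi_{\epsilon,j}$ with $(1-\delta)\varphi+\delta\rho$, where $\rho$ is a bounded solution of an auxiliary equation with $L^p$ right-hand side; this absorbs the torsion terms into a factor $(1+C\delta)$. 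If the auxiliary approach fails uniformly near $E$, the fallback is to work directly on $V$. There one uses that log terminal singularities give $L^{p'}$ densities with respect to the pushforward of $dV_X$, and applies a Kołodziej-type uniform estimate on $V$ via the Hermitian form $\omega_V$.
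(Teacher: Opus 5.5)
Your Step~3 carries the entire proof, and it is asserted rather than proved. Worse, the tools you point to cannot give the half of it that matters.

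Theorem~\ref{thm :stability-big} bounds $\sup_X(\psi-\varphi)$ only for competitors $\psi\in\PSH(X,\theta)$. Since $\varphi$ is $\theta$-psh, hence $(\theta+\epsilon\omega_X)$-psh, you may insert it as a competitor into the stability estimate for the equation of $\varphi_{\epsilon,j}$. This controls $\sup_X(\varphi-\varphi_{\epsilon,j})$ uniformly in $\epsilon$, because a subsolution for $\theta$ is also one for $\theta+\epsilon\omega_X$. But that is the easy direction. Since $\varphi$ is already usc, the inequality you actually need is the uniform bound $\varphi_{\epsilon,j}\le\varphi+o(1)$; it is what transfers lower semicontinuity from the continuous $\varphi_{\epsilon,j}$ to $\varphi$. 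For it, $\varphi_{\epsilon,j}$, which is only $(\theta+\epsilon\omega_X)$-psh, would have to serve as a competitor for the $\theta$-equation.

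Running the domination principle for $\theta+\epsilon\omega_X$ with $\varphi$ in the other slot does not work either. The measure $(\theta+\epsilon\omega_X+dd^c\varphi)^n$ contains mixed terms $\epsilon^k\omega_X^k\wedge(\theta+dd^c\varphi)^{n-k}$ that are not controlled by $f$.

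To make $\varphi_{\epsilon,j}$ $\theta$-psh you must pass to $(1-\lambda)\varphi_{\epsilon,j}+\lambda\rho$ with $\lambda\sim\epsilon$ and $\theta+dd^c\rho\ge c\,\omega_X$. No such $\rho$ is bounded near the exceptional locus. Let $C$ be a curve in a positive-dimensional fibre of $\pi$; such fibres exist over every singular point by Zariski's main theorem. Then $\theta|_C=0$, so $dd^c(\rho|_C)\ge c\,\omega_X|_C>0$ on a compact curve, which by Stokes forces $\rho|_C\equiv-\infty$. Hence the error $\lambda(-\rho)$ blows up exactly on $E$, where continuity is at stake. This is the same mechanism that confines Theorem~\ref{thmA} to $\Omega$.

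Your competitor $(1-\delta)\varphi+\delta\rho$ with $\rho$ bounded deals with torsion, not with this $\epsilon\omega_X$ defect. Your fallback, a Ko{\l}odziej-type estimate on $V$, yields $L^\infty$ bounds, not continuity. In short, a uniform estimate $\|\varphi_{\epsilon,j}-\varphi\|_{L^\infty(X)}\to0$ already contains the theorem, so Step~3 begs the question.

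The missing idea is to approximate $\varphi$ \emph{inside} the class of $\theta$-psh functions. Your Step~1 makes this possible, but you never exploit it. The paper argues by contradiction in the spirit of Ko{\l}odziej, Dinew--Zhang and Cho--Choi:
\begin{enumerate}
\item Assume $d=\sup_X(\varphi-\varphi_*)>0$, and pick $x_0$ minimizing $\varphi$ on the closed set where the jump equals $d$.
\item Take a Stein neighbourhood $B$ of $\pi(x_0)$ carrying a smooth strictly psh $\rho$ with $\omega_V\ge dd^c\rho$ and a strict minimum at $\pi(x_0)$.
\item Push $\varphi$ down to a bounded $\omega_V$-psh function on $B$. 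Approximate it by a decreasing sequence of smooth $\omega_V$-psh functions (Lemma~\ref{lem: approx}, via Forn{\ae}ss--Narasimhan), whose pullbacks are $\theta$-psh.
\end{enumerate}
Because these approximants have the same reference form and decrease to $\varphi$, the local stability estimate of Proposition~\ref{prop: stability} applies to them directly; it is proved with the auxiliary subsolution of Lemma~\ref{lem: local-subsol}. The choice of $x_0$ and $\rho$, together with a Hartogs-type lemma, keeps the relevant sublevel sets away from the boundary. One then gets a positive lower bound for $\mu(\{u_j>u+\kappa\})$ with $\kappa>0$ fixed, contradicting $u_j\searrow u$. Only a one-sided, monotone approximation is needed, and no degenerating family $\theta+\epsilon\omega_X$ ever enters.
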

The existence of a globally bounded solution was established in~\cite{guedj2021quasi}, where the solution is also shown to be smooth on $\pi^{-1}(V^{\mathrm{reg}})$. The original idea of the proof of Theorem~\ref{thmB} goes back to~\cite{kolodziej1998complex}.

\medskip 
Finally, we apply our continuity results to study diameter bounds along the Chern–Ricci flow, in analogy with~\cite{deruelle2025k} for the K\"ahler–Ricci flow. Generalizing the work of Guedj--Zeriahi~\cite{guedj2017regularizing} and Di Nezza--Lu~\cite{di2017uniqueness}, it was shown in~\cite{to2018regularizing} that there exists a unique twisted Chern–Ricci flow on $X$ with initial condition $T_0$, that is, a smooth family of Hermitian metrics $(\omega_t)_{t\in[0,T_{\max})}$ satisfies 
 \begin{equation}\label{eq: crf0}
     \frac{\partial\omega_t}{\partial t}=-\textrm{Ric}(\omega_t)+\eta ,\quad \omega_t\xrightarrow{t\to 0}T_0\;\text{weakly}.
 \end{equation}
Here $\mathrm{Ric}(\omega_t)$ denotes the Chern–Ricci form of $\omega_t$, $\eta$ is a smooth $(1,1)$-form, and $T_0=\omega_X+\dc \varphi_0$ is a positive $(1,1)$-current with $\varphi_0\in L^\infty(X)$. The maximal existence time is given by
 \[ T_{\max}=\sup\{t>0:\exists\,\hat{\psi}_t\in\mathcal{C}^\infty(X),\omega_X+t(\eta-\textrm{Ric}(\omega_X))+\dc\hat\psi_t>0 \}.\]
 Note that the convergence at $t=0$ holds in the weak sense of currents. We further investigate diameter bounds for solutions to the twisted Chern--Ricci flow~\eqref{eq: crf0} under geometric assumptions on the initial data $T_0$.
   
\begin{bigthm}\label{thmC} Let $(X,\omega_X)$ be a compact Hermitian manifold of dimension $n$.
Assume that $\varphi_0$ is continuous and $T_0^n=e^{\psi^+-\psi^-}\omega_X^n$ where $\psi^{\pm}$ are quasi-psh function on $X$ with $e^{-\psi^-}\in L^p(\omega_X^n)$, for $p>1$.
Let $(\omega_t)_{t\in [0,T_{\max})}$ be a solution to the weak twisted Chern-Ricci flow~\eqref{eq: crf0} starting at $T_0$. 
    Then for any $x,y\in X$,
        \[ \textrm{diam}(X,\omega_t)\leq C\qquad d_{\omega_t}(x,y)\leq Cd_{\omega_X}(x,y)^\alpha,\;\forall\, t\in [0,T_{\max}/2],\]
        for some constants $C,\alpha>0$ that only depend on $X$, $\omega_X$, $p$ and an upper bound for $\|e^{-\psi^-}\|_p$, where $d_{\omega_t}$ denotes the Riemannian distance associated to $\omega_t$.
\end{bigthm}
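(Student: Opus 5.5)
The plan is to reduce the diameter and distance bounds to a uniform H\"older estimate for the flow potentials, combined with a Chern--Laplacian estimate, following the strategy of~\cite{deruelle2025k} and the Kähler diameter results of~\cite{guo2022-diameter,GuedjGuenanciaZeriahi25-diameter,vu24-diameter}.

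First, write $\omega_t=\hat\omega_t+\dc\varphi_t$ with $\hat\omega_t=\omega_X+t(\eta-\mathrm{Ric}(\omega_X))$. On $[0,T_{\max}/2]$ the forms $\hat\omega_t$ (after adding $\dc\hat\psi_t$) are uniformly Hermitian, since $\hat\omega_t$ is a convex combination of $\omega_X$ and $\hat\omega_{T}$ for some fixed $T<T_{\max}$. The flow becomes the parabolic Monge--Amp\`ere equation
\[ (\hat\omega_t+\dc\varphi_t)^n=e^{\dot\varphi_t}\omega_X^n,\qquad \varphi_0 \text{ given}. \]
The results of~\cite{to2018regularizing}, together with the hypothesis $T_0^n=e^{\psi^+-\psi^-}\omega_X^n$, should give uniform bounds $\|\varphi_t\|_{L^\infty}\le C$. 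They should also give a lower bound $\dot\varphi_t\ge \psi^+ - C$, obtained by applying the maximum principle to $\dot\varphi_t-\psi^+$, or more precisely to $t\dot\varphi_t$ combined with $\varphi_t$ and the quasi-psh function $\psi^+$. Similarly one expects $\dot\varphi_t\le C-\psi^-$, from a maximum principle argument on $\dot\varphi_t+\psi^-$ that exploits the quasi-psh property of $\psi^-$. The resulting density is
\[ f_t=e^{\dot\varphi_t}\le Ce^{-\psi^-}\in L^p, \]
with bounds uniform in $t\in[0,T_{\max}/2]$, and each $\varphi_t$ solves an elliptic Monge--Amp\`ere equation with $L^p$ density and uniformly Hermitian reference form.

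Second, apply the stability estimate (Theorem~\ref{thm :stability-big}) and Theorem~\ref{thmA}, which is the case $\theta$ Hermitian, $\Omega=X$, to get $\|\varphi_t\|_{C^{\alpha}(X)}\le C$ uniformly in $t$. For small $t$ this requires care, since the continuity of $\varphi_0$ must propagate. I would handle it by a stability comparison between $\varphi_t$ and $\varphi_0$, using the continuity of $\varphi_0$ and the uniform $L^p$ bound.

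Third, and this is the main obstacle, convert these bounds into metric bounds. The approach I would try first adapts the diameter argument of Guo--Phong--Song--Sturm, or its pluripotential version by Guedj--Guenancia--Zeriahi / Vu, to Hermitian metrics $\omega=\theta+\dc\varphi$ with $\omega^n=f\omega_X^n$, $\|f\|_{L^p}\le C$, and an entropy-type bound. The steps are as follows.
\begin{enumerate}
\item For a ball $B$, compare $d_\omega$ with $\omega_X$ through the Green function of $\omega$, or through auxiliary Monge--Amp\`ere equations with right-hand side $\mathbf 1_B f/\int_B f$.
\item Use the uniform H\"older (or $L^\infty$) stability to show that $\int_B \mathrm{tr}_{\omega_X}\omega\,\omega_X^n\lesssim r^{2n-2+2\alpha}$. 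In the Hermitian case, the torsion terms coming from $\dc\omega_X^{n-1}\ne0$ are handled by an integration by parts against $\varphi$ and are bounded by $C\|\varphi\|_{C^\alpha}$ times lower order terms.
\item Deduce from this Morrey-type estimate a H\"older bound for the length of curves, hence $d_{\omega_t}(x,y)\le Cd_{\omega_X}(x,y)^\alpha$, via a Campanato/segment-averaging argument over families of nearly parallel segments.
\end{enumerate}
The diameter bound then follows, because $X$ is covered by boundedly many $\omega_X$-balls. The delicate points are the torsion terms in the trace integral and the uniformity as $t\to0$.
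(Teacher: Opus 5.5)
Your proposal is correct and follows essentially the paper's route: parabolic maximum-principle bounds give $\|\varphi_t\|_\infty\le C$ and $\dot\varphi_t\le C-\psi^-$, hence a uniform $L^p$ bound on $e^{\dot\varphi_t}$; the stability estimate plus Demailly regularization then gives uniform H\"older continuity of $\varphi_t$; finally, Li's argument bounds $\int_{B_r}|\nabla d_p|^2_{\omega_X}\omega_X^n\le\int\chi\,\mathrm{tr}_{\omega_X}\omega_t\,\omega_X^n$ by integrating by parts against $\varphi_t-\varphi_t(p)$ (the torsion terms are lower order), and Poincar\'e--Wirtinger/Campanato turns this into the H\"older bound on $d_{\omega_t}$. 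The Green-function step and the extra comparison with $\varphi_0$ for small $t$ are unnecessary: the $L^p$ bound on $e^{\dot\varphi_t}$ is uniform down to $t=0$, so the elliptic H\"older estimate applies directly, and your Morrey estimate for the trace combined with $|\nabla d_p|_{\omega_t}\le 1$ already suffices.
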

The diameter bound of the families of K\"ahler metrics has been  studied in~\cite{Fu-Guo-Song2020-geometric,Li_yang_2021,GuoPhongTongWang2021-modulus,guo2022-local,guo2022-diameter,GuedjGuenanciaZeriahi25-diameter,Guo2024-diameter2,Guo-Phong-Sturm24-green,vu24-diameter,nguyen-vu24-diameter,GJSS25-cscK}. We follow the same path as in~\cite{Li_yang_2021} using H\"older regularity
of the Monge--Ampère potentials to establish a uniform upper bound on diameters; cf.~Proposition ~\ref{prop: diameter}.

\medskip 
The paper is organized as follows. In Section~\ref{sect: pre} we recall some necessary materials which come from pluripotential theory in the Hermitian setting. 
Our main Theorem~\ref{thmA} on the local H\"older continuity of solutions is proved in Section~\ref{sect: local}, where we also establish the stability property. While, Theorem~\ref{thmB} on the global continuity of solutions is shown in Section~\ref{sect: global}.
Finally, Section~\ref{sect: application} is devoted to studying the diameter bound along the weak Chern--Ricci flow and contains the proof of Theorem~\ref{thmC}. 

\section{Preliminaries}\label{sect: pre} Throughout the paper, $X$ denotes a compact Hermitian manifold of complex dimension $n$, equipped with a Hermitian form $\omega_X$. 
	Let \(dV_{X}:={\omega_X^n}/n! \)
	denote the volume form associated with $\omega_X$. For any $p\geq 1$, we simply write $\|f\|_p$ for the $L^p(X,dV_X)$-norm of $f$.

\subsection{Quasi-psh functions and Monge-Amp\`ere measures}

Recall that an upper semi-continuous function $ \varphi:X \rightarrow\mathbb{R}\cup\{-\infty\} $
	is called {\it quasi-plurisubharmonic} ({\it quasi-psh} for short) if it is locally the sum of a smooth and a plurisubharmonic (psh for short) function. In particular, $\varphi$ is usc and integrable. Quasi-psh functions are actually in $L^p(X,dV_X)$
for any $p\in[1,\infty)$, and the induced topologies are also equivalent; see, e.g.,~\cite{demaillycomplex,guedj2017degenerate}.
\begin{definition}\label{def: analytic}
  We say that a quasi-psh function $\varphi$ has {\em analytic singularities} if $\varphi$ can be locally written as
\[\varphi=c\log \sum_{j=1}^N|f_j|^2 +g \]
where $c>0$, the $f_j$'s are smooth functions and $g$ is a locally bounded function.   
\end{definition}

Let $\theta$ be a real smooth (1,1) form on $X$. We say that $\varphi$ is {\it $\theta$-plurisubharmonic}  ({\it $\theta$-psh} for short) if it is quasi-psh, and $$\theta_\varphi:=\theta+\dc \varphi\geq 0$$ in the sense of currents, where ${\rm d}=
	\partial+\Bar{\partial}$ and ${\rm d}^c=\frac{i}{2\pi}(\Bar{\partial}-\partial)$ so that $\dc=\frac{i}{\pi}\partial\Bar{\partial}$. Let $\PSH(X,\theta)$ denote the set of all $\theta$-psh functions which are not identically $-\infty$.

    We say that $\theta$ is {\em big} if there exists a $\psi_0\in\PSH(X,\theta)$ such that $\theta+\dc \psi_0\geq \varepsilon_0\omega_X$ for some $0<\varepsilon_0<<1$. As a consequence of Demailly’s regularization~\cite{demailly2004numerical}, we can choose $\psi$ to have analytic singularities. 

    We denote by $$V_\theta=\sup\{u\in\PSH(X,\theta):u\leq 0 \}$$ a $\theta$-psh function with minimal singularities. We observe that $V_\theta\geq \psi_0$, in particular, $V_\theta$ is locally bounded in $\Omega$. 

    Let $U$ be an open subset of $X$. Let $\varphi\in\PSH(X,\theta)\cap L^\infty(U)$.
    Following the construction of Bedford-Taylor~\cite{bedford1976dirichlet,bedford1982new}, it has been shown in~\cite{dinew2012pluripotential,kolodziej2015weak} that the complex Monge-Amp\`ere measure \[\theta^n_\varphi=(\theta+\dc \varphi)^n\] is well-defined on $U$. 

We recall the following domination principle, which will be useful in the sequel.
    \begin{proposition}[{\cite[Lemma 4.2]{BoucksomGuedjLu2025-volume}}]\label{prop: domination}
        Let $\varphi,\psi$ be $\theta$-psh functions on $X$ such that $\min\{\varphi,\psi\}\geq \varphi$ and $\psi\leq \varphi+C$ for some $C>0$. If \[(\theta+\dc \varphi)^n\leq c(\theta+\dc \psi)^n \; \text{on}\;\{\varphi<\psi\}\cap \{\rho>-\infty\}, \] for some $c\in[0,1)$, then $\varphi\geq \psi$. 
    \end{proposition}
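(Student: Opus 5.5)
The plan is to argue by contradiction after a small perturbation of $\psi$ that makes it strictly $\theta$-psh and forces all the relevant sublevel sets to stay inside $\Omega=\{\rho>-\infty\}$. I read the (slightly garbled) hypotheses as follows. The function $\rho=\psi_0$ is the $\theta$-psh function with analytic singularities satisfying $\theta+\dc\rho\geq\varepsilon_0\omega_X$, normalised by $\rho\le 0$. The functions $\varphi,\psi$ are locally bounded on $\Omega$, with $\psi\leq\varphi+C$. I am not sure this reading is what the author intends.

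\textbf{Step 1: perturb $\psi$.} For $\delta\in(0,1)$ set $\psi_\delta:=(1-\delta)\psi+\delta\rho$. Then $\psi_\delta\in\PSH(X,\theta)$ and
\[\theta+\dc\psi_\delta\geq (1-\delta)(\theta+\dc\psi)+\delta\varepsilon_0\omega_X .\]
Expanding the wedge power of this sum of positive forms gives, on $\Omega$,
\[(\theta+\dc\psi_\delta)^n\geq (1-\delta)^n(\theta+\dc\psi)^n, \qquad (\theta+\dc\psi_\delta)^n\geq \delta^n\varepsilon_0^n\omega_X^n .\]
Since $\psi\le\varphi+C$ and $\rho\to-\infty$ at $\partial\Omega$, we have $\psi_\delta-\varphi\le (1-\delta)C+\delta\rho+\delta|\varphi|$. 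I expect (using that $\varphi$ is bounded near $\partial\Omega$ modulo the singularities of $\rho$, which is where the ``minimal singularities'' type hypothesis $\min\{\varphi,\psi\}$ enters) that $\psi_\delta-\varphi\to-\infty$ at $\partial\Omega$. Hence $m_\delta:=\inf_\Omega(\varphi-\psi_\delta)$ is attained on a compact subset of $\Omega$, and for small $s>0$ the set
\[U_s:=\{\varphi<\psi_\delta+m_\delta+s\}\]
is nonempty, open and relatively compact in $\Omega$.

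\textbf{Step 2: the contradiction on $U_s$.} Suppose $m_\delta<0$. Then $U_s\subset\{\varphi<\psi\}\cap\Omega$ for $s$ small, so the hypothesis together with Step 1 gives on $U_s$
\[(\theta+\dc\varphi)^n\leq c(\theta+\dc\psi)^n\leq c(1-\delta)^{-n}(\theta+\dc\psi_\delta)^n .\]
Fix $\delta$ so small that $c':=c(1-\delta)^{-n}<1$. On the relatively compact set $U_s$, where both functions are bounded, I would invoke the Hermitian modified comparison principle of Ko\l{}odziej--Nguyen (the local Bedford--Taylor comparison corrected by the torsion terms $\dc\theta$, $d\theta\wedge d^c\theta$). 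In the form needed, it reads
\[\int_{U_s}(\theta+\dc\psi_\delta)^n\leq (1+Cs)^n\int_{U_s}(\theta+\dc\varphi)^n,\]
with $C$ depending only on bounds for the torsion of $\theta$ and the oscillation of the functions on a fixed neighbourhood of $U_{s_0}$. Combining the two displays gives
\[(1-c'(1+Cs)^n)\int_{U_s}(\theta+\dc\psi_\delta)^n\leq 0 .\]
For $s$ small the coefficient is positive, while $\int_{U_s}(\theta+\dc\psi_\delta)^n\geq\delta^n\varepsilon_0^n\int_{U_s}\omega_X^n>0$, because $U_s$ is a nonempty open set. This is a contradiction, so $m_\delta\geq 0$, that is, $\varphi\geq\psi_\delta$ on $\Omega$.

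\textbf{Step 3: conclude.} Letting $\delta\to0$ gives $\varphi\geq\psi$ on $\Omega$, hence everywhere, since $X\setminus\Omega$ is pluripolar and quasi-psh functions are determined off pluripolar sets.

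\textbf{Main obstacle.} I expect the delicate point to be Step 2, namely applying the comparison principle in the non-closed setting with constants uniform as $s\to0$. The Hermitian comparison principle only holds up to an error of order $s$ coming from the torsion. This is exactly why one needs the strict inequality $c<1$ and the strict positivity $\delta\varepsilon_0\omega_X$: together they absorb that error. A secondary issue is justifying that $U_s\Subset\Omega$, which relies on the singularities of $\rho$ and the boundedness assumptions on $\varphi$ near $\partial\Omega$.
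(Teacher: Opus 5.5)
\textbf{The gap is in Step 1, not Step 2.} The paper does not prove this proposition; it quotes it from \cite[Lemma 4.2]{BoucksomGuedjLu2025-volume}. So your argument has to stand on its own, and it fails at the claim that $\psi_\delta-\varphi\to-\infty$ at $\partial\Omega$.

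\emph{Your reading of the hypotheses is too weak.} The garbled condition must be $\min\{\varphi,\psi\}\geq\rho$; local boundedness on $\Omega$ is not enough. Here is a counterexample on $X=\mathbb{P}^1$ with $\theta=\omega_{FS}=dd^c\log(1+|z|^2)$:
\begin{itemize}
\item Let $\varphi=\log\frac{|z|^2}{1+|z|^2}$, so that $\theta+dd^c\varphi=dd^c\log|z|^2$ is a point mass at $0$.
\item Let $\rho=(1-\varepsilon_0)\varphi$. It has analytic singularities, satisfies $\theta+dd^c\rho\geq\varepsilon_0\omega_{FS}$, and $\Omega=\mathbb{P}^1\setminus\{0\}$.
\item Let $\psi=\varphi+1$.
\end{itemize}
Then $\varphi,\psi$ are locally bounded on $\Omega$ and $\psi\leq\varphi+1$. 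Since $n=1$, $(\theta+dd^c\varphi)^n=0\leq c(\theta+dd^c\psi)^n$ on $\Omega$. Yet $\varphi<\psi$ everywhere. This example even satisfies the literal hypothesis $\min\{\varphi,\psi\}\geq\varphi$.

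\emph{Even granting $\varphi\geq\rho$, your perturbation does not work.} With $\psi_\delta=(1-\delta)\psi+\delta\rho$ you only get $\psi_\delta-\varphi\leq(1-\delta)C+\delta(\rho-\varphi)\leq C$. This stays bounded whenever $\varphi-\rho$ is bounded near $\partial\Omega$, for instance when $\varphi=\rho$. Your bound involving $\delta|\varphi|$ is useless, since $|\varphi|$ blows up there. Consequently $U_s\Subset\Omega$ is not established. Without it you cannot run the integration by parts behind the comparison principle of Step 2, which needs $\max(\varphi,\psi_\delta+m_\delta+s)-\varphi$ to be compactly supported in $\Omega$. The inclusion $U_s\subset\{\varphi<\psi\}$ also silently uses a lower bound by $\rho$.

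\textbf{The fix.} Perturb by a function strictly more singular than $\rho$, paid for by the slack in $\theta+dd^c\rho\geq\varepsilon_0\omega_X$.
\begin{itemize}
\item Normalize $\rho\leq 0$, let $\theta\leq A\omega_X$, and take $0<a\leq\varepsilon_0/(2A)$. Then $\theta+dd^c\big((1+a)\rho\big)=(1+a)(\theta+dd^c\rho)-a\theta\geq\frac{\varepsilon_0}{2}\omega_X$.
\item Set $\psi_\delta:=(1-\delta)\psi+\delta(1+a)\rho$. Using $\varphi\geq\rho$, you get $\psi_\delta-\varphi\leq(1-\delta)C+\delta a\rho\to-\infty$ at $\partial\Omega$. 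So $m_\delta$ is finite, and $U_s\subset\{\rho>-M\}\Subset\Omega$ whenever $m_\delta+s<0$.
\item On $U_s$ one has $(1+a)\rho\leq\varphi<(1-\delta)\psi+\delta(1+a)\rho$. Hence $\psi>(1+a)\rho$, so $\psi_\delta<\psi$ and $U_s\subset\{\varphi<\psi\}\cap\Omega$.
\end{itemize}
With this modification, Step 2 goes through: it is a local Ko\l{}odziej--Nguyen modified comparison principle, where the torsion terms of $\theta$ are absorbed by $\theta+dd^c\psi_\delta\geq\frac{\delta\varepsilon_0}{2}\omega_X$ together with $c(1-\delta)^{-n}<1$. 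Step 3 then goes through as well.

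\textbf{A minor correction.} $U_s$ need not be open, since $\varphi-\psi_\delta$ is a difference of upper semicontinuous functions. It does have positive Lebesgue measure, because a quasi-psh function equals its essential upper limit at every point. That is all you use.
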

\subsection{Demailly's regularization}
Following~\cite{demailly1994regularization}, we consider $\rho_\delta\f$-regularization of the function $\f$ defined by
\[\rho_\delta\f(z)=\frac{1}{\delta^{2n}}\int_{\zeta\in T_zX}\f(\exph_z(\zeta))\rho\left( \frac{|\zeta|^2_{\omega_X}}{\delta^2}\right)dV_{\omega_X}(\zeta), \quad\delta>0,\] where $\zeta\mapsto\exph_z(\zeta)$
 is the (formal) holomorphic part of the Taylor expansion
of the Riemann exponential map of the Chern connection on the tangent bundle of $X$
associated to $\omega$ and the smoothing kernel $\rho:\mathbb{R}_+\to\mathbb{R}_+$ is given by
\[\rho(t)=\begin{cases}
    \frac{\eta}{(1-t)^2}\exp\left(\frac{1}{t-1} \right), & 0\leq t\leq 1    \\
    0, & t>1
\end{cases}\] with a normalizing constant $\eta$ such that $\int_\mathbb{R}\rho(t)dt=1$.  we define the Kiselman-Legendre transform:
\begin{equation}\label{eq: Kiselman}
   \Phi_{c,\delta}:=\inf_{t\in[0,\delta]} \left(\rho_t\f +Kt-K\delta-c\log\frac{t}{\delta}\right),  
\end{equation}
where $c>0$, $\delta\in (0,1)$ and $K$ is a positive (curvature) constant (as in~\cite{demailly1994regularization}) to be chosen $K$ so that $t\to \rho_t\f+Kt^2$
increases in $t$.
Following~\cite[Lemma 4.1]{kolodziej2019stability} we obtain the following
\begin{equation}\label{eq: regularization}
    \theta+\dc\Phi_{c,\delta}\geq -(Ac+2K\delta)\omega_X
\end{equation} where $A$ is a lower bound of the negative part of the Chern curvature of $\omega_X$. 


By~\cite[Lemma 2.3]{demailly2014holder}, there exists a constant $C>0$ depending on $\omega_X$ and $\|\varphi\|_{\infty}$ such that \begin{equation}\label{eq: integral}
    \int_X\frac{|\rho_\delta\varphi-\varphi|}{\delta^2}dV_X\leq C.
\end{equation}
We remark that although the lemma is stated for K\"ahler manifolds,
the same proof works for Hermitian ones after replacing the Riemann curvature tensor
by the Chern curvature tensor used in~\cite{demailly1994regularization}.

\section{Local H\"older continuity of Monge-Amp\`ere potentials}\label{sect: local}

Assume $\theta$ is a real smooth (1,1) form such that there exists a $\theta$-psh function $\psi_0$ with {\em analytic singularities} satisfying $$\theta+\dc\psi_0\geq \varepsilon_0\omega_X$$ in the sense of currents, for some $\varepsilon_0>0$.  
We observe that $\psi$ is locally bounded on an open Zariski subset $\Omega:=X\setminus\{\psi_0=-\infty\}$. By subtracting a positive constant, we may assume that $\psi_0\leq V_\theta$.

Let $0\leq f\in L^p(X,dV_X)$ where $p>1$. It follows from ~\cite[Theorem D]{BoucksomGuedjLu2025-volume} that there exist a unique constant $c=c(\theta,f)>0$ and a  $\theta$-psh function such that $\sup_X\varphi=0$,
\begin{equation}\label{eq: cmae-big}
V_\theta-C\leq\varphi\leq V_\theta\quad\text{and}\quad    (\theta+\dc\varphi)^n=cfdV_X\quad \text{in}\; \Omega,
\end{equation} for $C>0$ depending on $\omega_X$, $\theta$, $p$, and $\|f\|_p$.

\smallskip
In this context, we prove the following.
\begin{theorem}
Let $\mu=fdV_X$ be a measure absolutely continuous with respect to Lebesgue measure with density $0\leq f\in L^p(X,dV_X)$,  $p>1$. 
Let $(\varphi,c)\in \PSH(X,\theta)\times (0,+\infty)$ be such that $\sup_X\varphi=0$,  $$V_\theta-C_0\leq\varphi\leq V_\theta,\qquad(\theta+\dc\f)^n=c\mu\; \text{in}\,\Omega,$$ with a positive constant $C_0>0$ depending only on $\omega_X$, $X$, $\theta$, $p$ and $\|f\|_p$. Then $\f$ is H\"older continuous in $\Omega$.
\end{theorem}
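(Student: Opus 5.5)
We follow the strategy announced in the introduction: first prove a stability estimate of the form $\sup_\Omega(\psi-\varphi)_+ \lesssim \|(\psi-\varphi)_+\|_{L^1}^{\gamma}$ (Theorem~\ref{thm :stability-big}), then feed into it the Kiselman--Legendre regularization \eqref{eq: Kiselman} of $\varphi$, as in \cite[Theorem D]{demailly2014holder}. Since $\varphi$ is only bounded relative to $V_\theta$, all estimates are weighted by $\psi_0$, and the Hölder estimate is local on $\Omega$.

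\textbf{Step 1: stability.} Let $u\in\PSH(X,\theta)$ with $u\le V_\theta+C$. For small $\varepsilon\in(0,1)$ we compare $\varphi$ with the competitor
\[
w:=(1-\varepsilon)u+\varepsilon\psi_0-s,
\]
using an auxiliary equation. By \cite[Theorem D]{BoucksomGuedjLu2025-volume}, solve
\[
(\theta+\dc v)^n = c_g\,g\,dV_X, \qquad g=\mathbf 1_{\{\varphi<w\}}f/\|\mathbf 1_{\{\varphi<w\}}f\|_p ,
\]
with $v$ bounded relative to $V_\theta$. By Hölder, $\|\mathbf 1_{\{\varphi<w\}} f\|_{1}\le \|f\|_p\,\mathrm{Vol}(\{\varphi<w\})^{1-1/p}$. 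Mixing $v$ into the competitor and applying the domination principle (Proposition~\ref{prop: domination}) gives
\[
\varphi\ge w - C\,\mathrm{Vol}(\{\varphi<w\})^{\beta}
\]
for some $\beta>0$. The extra term $\varepsilon\psi_0$ supplies the strict positivity $\varepsilon\varepsilon_0\omega_X$ needed to absorb the torsion terms and to make the factor $c<1$ in the domination principle. Finally, Chebyshev gives $\mathrm{Vol}(\{\varphi<w\})\le s^{-1}\|(u-\varphi)_+\|_1$. Optimizing in $s$ and $\varepsilon$ yields
\[
u-\varphi\le C\varepsilon|\psi_0| + C\varepsilon^{-a}\|(u-\varphi)_+\|_1^{\gamma}
\quad\text{on }\Omega .
\]

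\textbf{Step 2: Hölder bound.} Apply Step 1 to
\[
u=(1-\lambda)\Phi_{c,\delta}+\lambda\psi_0 ,
\]
where $\lambda\approx Ac+2K\delta$ is chosen so that $u$ is $\theta$-psh by \eqref{eq: regularization} together with $\theta+\dc\psi_0\ge\varepsilon_0\omega_X$. Bound $\|(u-\varphi)_+\|_1$ by $C\delta^2$ using \eqref{eq: integral}, extended to $\theta$-psh functions that are only relatively bounded by working on sublevel sets of $\psi_0$ or with $\varphi-\psi_0$ weights. This gives
\[
\Phi_{c,\delta}-\varphi\le C\,\delta^{\alpha}\,(1+|\psi_0|)
\]
for suitable $c=\delta^{\kappa}$. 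The Kiselman argument of \cite{demailly2014holder} then converts this into
\[
\rho_\delta\varphi-\varphi\le C\,\delta^{\alpha'}(1+|\psi_0|)^{N},
\]
and hence into a bound on $\sup_{B(z,\delta)}\varphi-\varphi(z)$. This yields local Hölder continuity on each compact $K\Subset\Omega$, where $\psi_0$ is bounded.

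\textbf{Main obstacle.} The hardest step will be Step 1 in the non-closed setting. There $\int_X\theta_u^n$ is not constant, so the comparison principle carries error terms and the domination principle needs the strict factor $c<1$. We will try to get this by using the auxiliary solution $v$ together with the $\varepsilon\psi_0$ perturbation, bounding the mixed Monge--Ampère measures from below via $\theta_w^n\ge(1-\varepsilon)^n\theta_u^n+\varepsilon^n\varepsilon_0^n\omega_X^n$. A secondary difficulty is making \eqref{eq: integral} valid near $\partial\Omega$, where $\varphi$ is unbounded. There we will replace $\varphi$ by $\max(\varphi,\psi_0/\varepsilon')$-type truncations, or localize the estimate using a cutoff in $\psi_0$.
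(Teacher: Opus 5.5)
Your overall architecture (a stability estimate for $\varphi$, fed with the Kiselman--Legendre regularization as in \cite{demailly2014holder}, with $\psi_0$-weights and a conclusion on compact subsets of $\Omega$) matches the paper's, and Step~2 is essentially its argument. There is, however, a gap in Step~1, which carries all the content: the one quantitative ingredient you specify does not produce the power $\beta$.

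Write $E=\{\varphi<w\}$. With $g=\mathbf 1_E f/\|\mathbf 1_E f\|_p$ and $\theta_v^n=c_g\,g\,dV_X$, the mixed competitor $(1-\lambda)w+\lambda v$ has Monge--Amp\`ere measure at least $\lambda^n c_g\,g\,dV_X$. So it dominates $2\theta_\varphi^n$ on $E$ only if $\lambda^n\gtrsim\|\mathbf 1_E f\|_p/c_g$. From $\|g\|_p=1$ you only get a uniform lower bound on $c_g$. Meanwhile $\|\mathbf 1_E f\|_p$ tends to $0$ with $\mathrm{Vol}(E)$, but at no rate controlled by $\|f\|_p$. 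Your H\"older bound on $\|\mathbf 1_E f\|_1$ would only help if $c_g\int_X g\,dV_X=\int_\Omega\theta_v^n$ were bounded below by a uniform positive constant, i.e.\ some form of mass conservation. That is automatic for closed classes but not available for a non-closed $\theta$, and the paper's argument is designed precisely to avoid it.

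The missing idea is the normalization that makes the subsolution quantitative. The paper sets $F=\mathbf 1_{\{\varphi<\psi-\delta\}}f/\mu(\varphi<\psi-\delta)^{1/q}$. By H\"older, $\|F\|_r\le\|f\|_p^{1-1/q}$ for $\frac1r=\frac1p(1-\frac1q)+\frac1q$. Lemma~\ref{lem: subsolution} then gives $v$ with $V_\theta-1\le v\le V_\theta$ and $\theta_v^n\ge m\,F\,dV_X$, with $m$ uniform. The choice $\lambda\simeq\mu(\varphi<\psi-\delta)^{1/(nq)}$ yields $\theta_u^n\ge2\theta_\varphi^n$ for $u=(1-\lambda)\psi+\lambda v$ on $\{\varphi<u-\delta-C_0\lambda\}$. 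Proposition~\ref{prop: domination} then gives $\sup_X(\psi-\varphi)\le\delta+C\mu(\varphi<\psi-\delta)^{1/(nq)}$, and Chebyshev plus a choice of $\delta$ finishes. Equivalently, as in the Remark following Theorem~\ref{thm :stability-big}, you can normalize $\mathbf 1_E f$ in $L^q$ with $q<p$, using $\|\mathbf 1_E f\|_q\le\|f\|_p\mathrm{Vol}(E)^{1/q-1/p}$.

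Note that the strict factor in the domination principle comes from this choice of $\lambda$ alone. There are no torsion terms to absorb, since Proposition~\ref{prop: domination} is used as a black box, and $\varepsilon^n\varepsilon_0^n\omega_X^n$ could not dominate an unbounded $f$ anyway. So your $\varepsilon\psi_0$ perturbation, and the unexplained factor $\varepsilon^{-a}$, are superfluous. Dropping them gives the uniform bound $\sup_X(\psi-\varphi)_+\le C\|(\psi-\varphi)_+\|_1^\gamma$. Your $|\psi_0|$-weighted version would still suffice for the local conclusion once Step~1 is repaired.
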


\subsection{Stability}
We recall the following lemma, which is used in the proof of the stability result.
\begin{lemma}[{\cite[Lemma 4.5]{BoucksomGuedjLu2025-volume}}]\label{lem: subsolution}
    Let $g$ be a measurable function on $X$ such that $g\in L^q(dV_X)$ for some exponent $q>1$. Then there exists
$v\in\PSH(X,\theta)$ with $V_\theta-1\leq v\leq V_\theta$ such that
\[(\theta+\dc v)^n \geq cgdV_X\;\;\text{on}\,\Omega\] where $c$ is a positive constant only depending on $X$, $\omega_X$, $q$, $\theta$, and a lower bound for $\|g\|_q^{-1}$. 
\end{lemma}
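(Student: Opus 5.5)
We may assume $g\geq 0$ by replacing $g$ with $|g|$, which only strengthens the conclusion and has the same $L^q$-norm. If $\|g\|_q=0$ there is nothing to prove: take $v=V_\theta$. Otherwise we normalize and set $h:=g/\|g\|_q$, so that $\|h\|_q=1$. Instead of solving with density $h$ itself, we use the density $f:=1+h$. This buys two things at once:
\[
\|f\|_q\leq \Vol(X,dV_X)^{1/q}+1
\quad\text{and}\quad
\int_X f^{1/n}\,dV_X\geq \Vol(X,dV_X).
\]
So $f$ has a uniform $L^q$ upper bound and a uniform lower bound on $\int_X f^{1/n}dV_X$, independent of $g$.

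\textbf{Step 1: solve an auxiliary equation.} We apply \cite[Theorem D]{BoucksomGuedjLu2025-volume}, as recalled in \eqref{eq: cmae-big}, with density $f\in L^q$. This gives $u\in\PSH(X,\theta)$ and a constant $c_f>0$ with $\sup_X u=0$ and
\[
V_\theta-C\leq u\leq V_\theta,\qquad (\theta+\dc u)^n=c_f f\,dV_X \;\text{ in }\Omega .
\]
Here $C$ depends only on $X,\omega_X,\theta,q$ and the bound on $\|f\|_q$, hence not on $g$. By \cite[Theorem 4.7]{BoucksomGuedjLu2025-volume}, quoted in the introduction, $c_f$ is bounded below by a constant $c_0>0$. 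This constant depends only on $n,q,\omega_X,\theta$ and the lower bound for $\int_X f^{1/n}dV_X$, which is again uniform.

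Securing this lower bound on $c_f$ is the one genuinely delicate point, and it is exactly why we add $1$ to $h$: with density $h$ alone, $c_h$ could degenerate as $\int_X h^{1/n}\,dV_X$ becomes small.

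\textbf{Step 2: convex combination.} Put $C':=\max(C,1)$ and define
\[
v:=\tfrac{1}{C'}u+\big(1-\tfrac{1}{C'}\big)V_\theta .
\]
Since $\PSH(X,\theta)$ is convex, $v$ is $\theta$-psh. From $u\leq V_\theta$ we get $v\leq V_\theta$. From $u\geq V_\theta-C$ we get $v\geq V_\theta-C/C'\geq V_\theta-1$.

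On $\Omega$ both $u$ and $V_\theta$ are locally bounded, since $V_\theta\geq\psi_0$. We can therefore expand
\[
\theta+\dc v=\tfrac1{C'}(\theta+\dc u)+\big(1-\tfrac1{C'}\big)(\theta+\dc V_\theta)
\]
using Bedford--Taylor multilinearity. All mixed terms are positive measures on $\Omega$, so
\[
(\theta+\dc v)^n\geq C'^{-n}(\theta+\dc u)^n = C'^{-n}c_f(1+h)\,dV_X\geq C'^{-n}c_0\,\frac{g}{\|g\|_q}\,dV_X \quad\text{on }\Omega.
\]
This is the claim with constant $c=C'^{-n}c_0\|g\|_q^{-1}$. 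That constant depends only on $X,\omega_X,q,\theta$ and a lower bound for $\|g\|_q^{-1}$, as required.
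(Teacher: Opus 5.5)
The paper gives no proof of this lemma; it imports it from Boucksom--Guedj--Lu, so your argument has to stand on its own. Step~2 is fine: the convex combination $\frac{1}{C'}u+(1-\frac{1}{C'})V_\theta$ and the expansion of $(\theta+dd^c v)^n$ on $\Omega$ are correct, and they give a clean way to reach minimal singularities. The gap is in Step~1, exactly at the point you single out: the bound $c_f\geq c_0>0$.

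Information on $\int_X f^{1/n}dV_X$ cannot bound $c_f$ from below. Replacing $f$ by $Nf$ replaces $c_f$ by $c_f/N$ while only increasing $\int_X f^{1/n}dV_X$ (take $f\equiv N\to\infty$). A lower bound on $\int_X f^{1/n}dV_X$ controls $c_f$ at best from above, and that is all the paper records about Theorem~4.7. So the dependence you assert for $c_0$ is false. Your explanation of the $+1$ is also reversed: a small $\int_X h^{1/n}dV_X$ threatens to make $c_h$ large, not small. The $+1$ is still useful, but because it bounds $c_f$, and hence the constant $C$, from above. A correct lower bound on $c_f$ has to come from the upper bound on $\|f\|_q$. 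By the domination principle, a bound $c_f\geq m$ is essentially equivalent to having a subsolution $(\theta+dd^c w)^n\geq m f\,dV_X$: domination gives one direction, your Step~2 the other. So it cannot be quoted as a free input; it is the heart of the lemma.

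The missing idea is that the subsolution used to bound $c_f$ from below need not have minimal singularities. It can therefore be built from $\psi_0$ and the Hermitian case.
\begin{itemize}
\item Let $\phi$ be $\omega_X$-psh with $-1\leq\phi\leq 0$ and $(\omega_X+dd^c\phi)^n\geq m f\,dV_X$, where $m>0$ depends only on $n,q,\omega_X$ and an upper bound for $\|f\|_q$. This is the case $\theta=\omega_X$ of the statement, available from the Hermitian theory of Ko\l odziej--Nguyen and Guedj--Lu.
\item Put $w:=\psi_0+\varepsilon_0\phi$. Then $\theta+dd^c w\geq\varepsilon_0(\omega_X+dd^c\phi)$, so $(\theta+dd^c w)^n\geq\varepsilon_0^n m f\,dV_X$ on $\Omega$. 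Also $w\leq\psi_0\leq V_\theta\leq u+C$.
\item Suppose $c_f<\varepsilon_0^n m$. Then $(\theta+dd^c(u-t))^n\leq \frac{c_f}{\varepsilon_0^n m}(\theta+dd^c w)^n$ on $\Omega$ for every $t>0$. Proposition~\ref{prop: domination} gives $u-t\geq w$ on $X$ for all $t>0$, which is absurd at any point of $\Omega$.
\end{itemize}
Hence $c_f\geq\varepsilon_0^n m$, and your Step~2 yields the lemma with $c=C'^{-n}\varepsilon_0^n m\,\|g\|_q^{-1}$. With this addition your proof becomes a genuine reduction of the big case to the Hermitian case plus Theorem~D. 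Your convex combination with $V_\theta$ is then exactly what upgrades the analytic singularities of $\psi_0$ to minimal singularities.
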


We establish the following stability result, which is analog to the one~\cite[Theorem C]{guedj2012stability} for the K\"ahler case.

\begin{theorem}\label{thm :stability-big} Let $\mu=fdV_X$ be  a probability measure absolutely continuous with respect to the Lebesgue measure with density $f\in L^p(X,dV_X)$,  $p>1$.
Let $\f$ be a $\theta$-psh function such that $$\sup_X\f=0,\quad (\theta+\dc\f)^n=c\mu\;\text{on}\,\Omega,$$ for some $c>0$ and $V_\theta-C_0\leq \varphi\leq V_\theta$, with $C_0>0$. 
Assume $\psi\in\PSH(X,\theta)$. Then there exists $\gamma_0$ depending on $n$, $p$ such that for any $0<\gamma<\gamma_0=\frac{p-1}{np+p-1}$,
\[\sup_X(\psi-\f)_+\leq C\|(\psi-\f)_+\|_{1}^\gamma\]
where $C$ depends only on $X$, $c$, $C_0$ and an upper bound $\|f\|_{L^p}$.
\end{theorem}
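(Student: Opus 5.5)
We prove Theorem~\ref{thm :stability-big} with an auxiliary equation built from Lemma~\ref{lem: subsolution}, followed by the domination principle (Proposition~\ref{prop: domination}), in the spirit of the Guedj--Lu approach. We may assume $\psi\le V_\theta$ (since $\sup_X\psi$ can be controlled, or replace $\psi$ by $\max(\psi,\varphi)$, which only changes $(\psi-\varphi)_+$ trivially), so that $\psi-\varphi\le C_0$. Set $m:=\|(\psi-\varphi)_+\|_1$, which we may assume small. Fix $\varepsilon\in(0,1)$ and $s>0$, to be chosen as powers of $m$.

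\textbf{Step 1: auxiliary subsolution.} Put $g:=f\,\mathbf 1_{\{\varphi<\psi-s\}}$. By H\"older and Chebyshev,
\[\|g\|_q\le \|f\|_p\,\mathrm{Vol}(\{\psi-\varphi>s\})^{1/q-1/p}\le \|f\|_p (m/s)^{1/q-1/p}\]
for any $1<q<p$. Apply Lemma~\ref{lem: subsolution} to $g/\|g\|_q$ (rather than to $g$, so that the constant depends only on a fixed lower bound), and rescale. Concretely, we obtain $v\in\PSH(X,\theta)$ with $V_\theta-1\le v\le V_\theta$ and
\[(\theta+\dc v)^n\ge c_1\,\frac{g}{\|g\|_q}\,dV_X\quad\text{on } \Omega.\]

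\textbf{Step 2: comparison.} Consider the competitor
\[u:=(1-\varepsilon)\psi+\varepsilon v-\varepsilon-s,\]
which is $\theta$-psh and satisfies $u\le\varphi+C$. On $\{\varphi<u\}$ we have $\varphi<\psi-s$ (because $v\le V_\theta$, and $\psi\le V_\theta$ up to constants). There, by mixed Monge--Amp\`ere inequalities (multilinearity with positivity),
\[(\theta+\dc u)^n\ge \varepsilon^n(\theta+\dc v)^n\ge \varepsilon^n c_1 f/\|g\|_q \ge \varepsilon^n c_1\|g\|_q^{-1}c^{-1}(\theta+\dc\varphi)^n.\]
If $\varepsilon^n c_1/(c\|g\|_q)>1$, then Proposition~\ref{prop: domination} (with $\min(\varphi,u)$ as needed) gives $\varphi\ge u$. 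This yields
\[\psi-\varphi\le \varepsilon(\psi-v)+\varepsilon+s\le C\varepsilon+s.\]

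\textbf{Step 3: choice of parameters.} The condition in Step 2 reads $\varepsilon^n\gtrsim (m/s)^{1/q-1/p}$. Choosing $s=m^{a}$ and $\varepsilon=C(m/s)^{(1/q-1/p)/n}$ and balancing $\varepsilon\sim s$ gives
\[a=\frac{1/q-1/p}{n+1/q-1/p}.\]
Letting $q\to1$, this exponent tends to $\frac{1-1/p}{n+1-1/p}=\frac{p-1}{np+p-1}=\gamma_0$, so any $\gamma<\gamma_0$ is attained.

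The main obstacle is Step 2. First, the domination principle has to be applied on $\Omega$ only, where the Monge--Amp\`ere measures are defined; this requires the comparable singularities $u\le\varphi+C$ and the exact hypotheses of Proposition~\ref{prop: domination}. Second, the mixed-term inequality $(\theta+\dc u)^n\ge\varepsilon^n(\theta+\dc v)^n$ is delicate in the non-K\"ahler setting, because $\theta_\psi$ is only a positive current and $\psi$ may be unbounded. I would first handle this by replacing $\psi$ with $\max(\psi,V_\theta-C_0)$, which is locally bounded on $\Omega$ and leaves $(\psi-\varphi)_+$ unchanged since $\varphi\ge V_\theta-C_0$, so that Bedford--Taylor theory applies locally.
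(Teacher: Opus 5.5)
Your argument is correct and is essentially the paper's proof. It coincides with the alternative version in the Remark right after the proof, which normalizes $g=\mathbf 1_{\{\varphi<\psi-s\}}f$ by $\|g\|_q$ with $1<q<p$ and uses Chebyshev for Lebesgue measure, giving $\gamma=\frac{p-q}{npq+p-q}\to\gamma_0$; the main proof instead normalizes by $\mu(\varphi<\psi-\delta)^{1/q}$ and applies Chebyshev with respect to $\mu$. One small slip: the inclusion $\{\varphi<u\}\subset\{\varphi<\psi-s\}$ needs $V_\theta\le\psi+C_0$ (which your reduction $\psi\ge\varphi$ supplies), not $\psi\le V_\theta$, so the shift in $u$ should be $C_0\varepsilon+s$ rather than $\varepsilon+s$, and the final bound becomes $(C_0+1)\varepsilon+s$.
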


The proof is based on the use of auxiliary subsolutions and the comparison principle, an approach that arose from discussions with Chinh H. Lu at AIMS, Senegal. We also provide an alternative proof in the K\"ahler case that avoids the use of the relative Monge--Amp\`ere capacity. 
\begin{proof}
We observe that $(\psi-\varphi)_+=\max(\psi,\varphi)-\varphi$. We may assume that $\psi\geq \varphi$.

Fixing any $q>1$, we can find $r>1$ such that $\frac{1}{r}=\frac{1}{p}\big(1-\frac{1}{q}\big)+\frac{1}{q}$. 
We will prove the following stability estimate.
\begin{equation}\label{est: stability}
    \sup_X(\psi-\varphi)\leq \delta+ C\mu(\varphi<\psi-\delta)^{\frac{1}{nq}}
\end{equation} for $C>0$ only depending on $X$, $\omega_X$, $q$, $\theta$, and a lower bound for $\|f\|_q^{-1}$. 
Since the case $\mu(\varphi<\psi-\delta)=0$ is trivial, we assume that $\mu(\varphi<\psi-\delta)>0$.
     If we set $F=\frac{\mathbf{1}_{\{\varphi<\psi-\delta\}}f}{\mu(\varphi<\psi-\delta)^{\frac{1}{q}}}$ then
     the H\"older inequality yields
     \begin{align*}
    \int_X F^rdV_X&=\int_X f^{r-\frac{r}{q}} \bigg(\frac{\mathbf{1}_{\{\varphi<\psi-\delta\}}f}{\mu(\varphi<\psi-\delta)}\bigg)^{\frac{r}{q}} dV_X\\
   & \leq \|f\|^{r-\frac{r}{q}}_p\bigg( \int_X \frac{\mathbf{1}_{\{\varphi<\psi-\delta\}}f}{\mu(\varphi<\psi-\delta)}dV_X \bigg)^{\frac{r}{q}}=\|f\|_p^{r-\frac{r}{q}}.
\end{align*} We apply Lemma~\ref{lem: subsolution} to obtain a constant $c>0$ and $v\in\PSH(X,\theta)$ such that $V_\theta-1\leq v\leq V_\theta$ and 
$$(\theta+\dc v)^n\geq c^n\frac{ \mathbf{1}_{\{\varphi<\delta-\delta\}}f dV_X}{\mu(\varphi<\psi-\delta)^{\frac{1}{q}}},$$
for $c$ only depending on $\omega_X$, $p$, $q$, $\theta$ and $\|f\|_p^{-1}$. 


Set $u:=(1-\lambda)\psi+\lambda v$ for $\lambda\in (0,1)$ to be determined later. Since $V_\theta\leq \psi+C_0$, we infer that
\[\{\varphi<u-\delta -C_0\lambda\} \subset \{\varphi<\psi-\delta \}.\]
We have
\[\mathbf{1}_{\{\varphi<u-\delta -C_0\lambda\}}\omega_u^n\geq \lambda^n\mathbf{1}_{\{\varphi<u-\delta -C_0\lambda\}}\omega_v^n\geq \lambda^n c^n\frac{\mathbf{1}_{\{\varphi<u-\delta -C_0\lambda\}}f dV_X}{\mu(\varphi<\psi-\delta)^{\frac{1}{q}}}.\]
If $\mu(\varphi<\psi-\delta)^{\frac{1}{q}}\geq c^n/2$, then the stability property~\eqref{est: stability} follows trivially.
Otherwise, $\mu(\varphi<\psi-\delta)^{\frac{1}{q}}<  c^n/2$, hence we can choose $$\lambda=2^{1/n}c^{-1}\mu(\varphi<\psi-\delta)^{\frac{1}{qn}}\in (0,1)$$ to obtain that
\[\mathbf{1}_{\{\varphi<u-\delta -C_0\lambda\}}\omega^n_u\geq 2\mathbf{1}_{\{\varphi<u-\delta -C_0\lambda\}}fdV_X=\mathbf{1}_{\{\varphi<u-\delta -C_0\lambda\}}2\omega_\varphi^n. \]
We observe that $\varphi+C_0\geq V_\theta\geq u-\delta-C_0\lambda$.
 Thus, it follows from the domination principle (Proposition~\ref{prop: domination}) that $\varphi\geq u-\delta-C_0\lambda$ on $X$, hence
\[\psi-\varphi\leq \delta+(C_0+1)\lambda \]
This implies the desire estimate~\eqref{est: stability}. By the Chebyshev inequality and the H\"older inequality, we have
\begin{equation}\label{eq: Chebyshev}
    \mu(\varphi<\psi-\delta)\leq\frac{1}{\delta^{\frac{p-1}{p}}} \int_X (\psi-\varphi)_+^{\frac{p-1}{p}}fdV_X\leq \frac{\|f\|_p\cdot\|(\psi-\varphi)\|_1^{\frac{p-1}{p}}}{\delta^{\frac{p-1}{p}}}.
\end{equation} 
Choosing $\delta:=\|(\psi-\varphi)_+\|^\alpha$ for $\alpha=\frac{1}{q}\frac{p-1}{np+p-1}$, it follows from \eqref{est: stability} and \eqref{eq: Chebyshev} that
\[\sup_X(\psi-\varphi)\leq (C+1)\|(\psi-\varphi)_+\|_1^\alpha. \]
Since $q>1$ was arbitrarily chosen, we get the desired estimate.
\end{proof}
\begin{remark}
     We can consider an alternative auxiliary subsolution to obtain the stability result, which is based on the idea of Fang~\cite{Fang25-continuity-hessian}. 
     We set $g_\delta=\mathbf{1}_{\{\varphi<\psi-\delta \}}f$. We see that $g_\delta\in L^q(dV_X)$ for any $1<q<p$ by the H\"older inequality.
It follows from Lemma~\ref{lem: subsolution} that there exists a constant $m>0$ and $v\in\PSH(X,\theta)$ such that $V_\theta-1\leq v\leq V_\theta$ and 
$$(\theta+\dc v)^n\geq c^n\frac{ g_\delta dV_X}{\|g_\delta\|_q},$$
for $c$ only depending on $\omega_X$, $p$, and $\theta$.
We proceed in the same way as above to obtain $$\sup_X(\psi-\varphi)\leq \delta+ C\|g_\delta\|_q^{1/n},$$
  for $C>0$ depending on $C_0$, $\omega_X$, $q$, $\theta$. 
H\"older's inequality yields 
\begin{align*}
    \|g_\delta\|_q\leq \frac{\|f\|_p\|(\psi-\varphi)_+\|_1^{\frac{p-q}{pq}}}{\delta^{\frac{p-q}{pq}}}
\end{align*}
Choosing $\delta:=\|(\psi-\varphi)_+\|_1^\alpha$ {with} $\alpha=\frac{p-q}{npq+p-q}$ we obtain
\[\sup_X(\psi-\varphi)\leq C\|(\psi-\varphi)_+\|^\alpha_1 \]
for $C$ depending on $C_0$, $p$, $q$ and $\|f\|_p$. Since $q\in (1,p)$ was taken arbitrarily 
 the desired estimate holds for any 
$0<\alpha<\alpha_0:=\frac{p-1}{np+p-1}$.

\end{remark}
\subsection{H\"older continuity}
In this section, we study the H\"older continuity for solutions on the Zariski open set $\{\psi_0>-\infty\}$. In the K\"ahler case, the result is due to Demailly, Dinew, Guedj, Kolodziej, Pham and Zeriahi~\cite[Theorem D]{demailly2014holder}. The crucial ingredient in the latter proof was the application of Demailly's regularization approximations for quasi-psh functions which makes use of the holomophic part of the Riemannian exponential mapping~\cite{demailly1994regularization}. This method can be also applied to the non-K\"ahler case. We are going to follow the scheme of their proof with tiny refinements.


Recall that a $\theta$-psh function $\psi_0$ with analytic singularities such that $\theta+\dc\psi_0\geq \varepsilon_0\omega$ and $\sup_X\psi_0\leq 0$.
Set 
\[\f_{c,\delta}:= \frac{Ac+2K\delta}{\varepsilon_0}\psi_0+\left( 1-\frac{Ac+2K\delta}{\varepsilon_0}\right)\Phi_{c,\delta}.\]
 In the following arguments, we choose $c=O(\delta^\gamma)$ so  that $Ac+2K\delta=\delta^{2\gamma}$ and we write $\f_\delta$ instead of $\f_{c,\delta}$.  We observe that $\Phi_{c,\delta}\leq K\delta^2+K\delta$ so $\f_\delta-B\delta^{2\gamma}\leq 0$ for some $B>0$ and $\delta\leq \delta_0$ sufficiently small. By the stability result (Theorem~\ref{thm :stability-big}), we obtain
\begin{align*}
    \sup_X(\f_\delta-\f)&\leq C_0\|(\f_\delta-\f-B\delta^{2\gamma})_+\|^\gamma_{L^1}+B\delta^{2\gamma}\\
    &\leq C_0\|(\rho_\delta\f+K\delta-\f)\|^\gamma_{L^1}+B\delta^{2\gamma}.
\end{align*}
By~\cite[Lemma 2.3]{demailly2014holder} we have
\[\sup_X(\f_\delta-\f)\leq C_1\delta^{2\gamma},\]
where $C_1$ depends on $B$, $C_0$, $K$, $\|\f-V_\theta\|_{L^\infty}$ and the curvature of $\omega$. From this point we can conclude the H\"older continuity of $\f$ as in the proof of~\cite{demailly2014holder}. For the sake of completeness, we give all the details for the reader convenience.

Fix the point $z\in \Omega$. 
Then the minimum in the definition of $\Phi_{c,\delta}$ is realized at $t_0=t_0(z)$. 
Therefore, the last inequality yields at the point $z$,
\[\delta^{2\gamma}(\psi_0-\f)+(1-\delta^{2\gamma})(\rho_\delta\f+K\delta-\f-c\log(t/\delta))\leq C_1\delta^{2\gamma}. \]
Since $\rho_\delta\f+K\delta-\f\geq 0$ we infer that
\[ c(1-\delta^{2\gamma})\log\frac{t_0}{\delta}\geq \delta^{2\gamma}(\psi_0(z)-M_0-C_1).\]
Combining with $c=\varepsilon_0 A^{-1}\delta^{2\gamma}-2KA^{-1}\delta\geq\frac{1}{2} \varepsilon_0 A^{-1}\delta^{2\gamma}$ if $\delta\leq \delta_0$ sufficiently small, one gets that
\[ t_0\geq \delta\kappa\;\; \text{with}\; \kappa(z)=\exp \left( \frac{2A(\psi_0(z)+V_\theta(z)-C_1)}{\varepsilon_0(1-\delta_0^{2\gamma})}\right).\] Since $t\mapsto\rho_t\f+Kt^2$ is increasing and $t_0(z)\delta\kappa(z)$ it follows that
\begin{align*}
    \rho_{\delta\kappa}\f(z)+K\delta\kappa&\leq \rho_{t_0}\f(z)+Kt_0\\&\leq\Phi_{c,\delta}(z)= \frac{1}{1-\delta^{2\gamma}}(\f_\delta(z)-\delta^{2\gamma}\psi_0)
\end{align*}
B the stability result again, we obtain
\[ \f_\delta-\delta^{2\gamma}\psi_0\leq \f+\delta^{2\gamma}(C_1-\psi_0),\]
using that $\f\leq 0$, hence
\[\rho_{\delta\kappa(z)}\f(z)-\f(z)\leq \frac{\delta^{2\gamma}}{1-\delta^{2\gamma}} (C_1-\psi_0(z))\leq (1-\delta_0^{2\gamma})^{-1}\delta^{2\gamma}(C_1-\psi_0(z)).\] Replacing $\delta$ by $\delta\kappa(z)^{-1}$, one gets
\[ \rho_\delta\f(z)-\f(z)\leq (1-\delta_0^{2\gamma})^{-1}(C_1-\psi_0(z))\exp \left( \frac{4A\gamma(\psi_0(z)+V_\theta(z)-C_1)}{\varepsilon_0(1-\delta_0^{2\gamma})}\right)\delta^{2\gamma}. \]
It follows that for any compact set $K\subset\subset \Omega$ there exists a constant $C$ depending on $K$ such that for any $z\in K$,
\[\rho_\delta\f(z)-\f(z)\leq C\delta^{2\gamma} \]
which implies the H\"older continuity of $\f$ on $K$; see e.g.,~\cite[Lemma 4.2]{guedj2008holder} or~\cite{Zeriahi20-continuity}. Since $K$ was taken arbitrarily, one can conclude that $\f$ is H\"older continuous on $\Omega$.

\begin{remark}
    Our argument in this section can also be adapted to establish the H\"older regularity of solutions to the complex Monge–Ampère equation in the local setting, as studied by Guedj--Kołodziej--Zeriahi~\cite{guedj2008holder}. A key ingredient in their approach is the stability result~\cite[Theorem 1.1]{guedj2008holder}. By following similar arguments to those presented above (see also Section~\ref{sect: global}), we can derive a corresponding stability estimate without appealing to the Monge--Ampère capacity. 

    Our approach to the stability estimate can be applied to study the regularity of solutions to complex Hessian equations; see; e.g.,~\cite{KN16-hessian,GuNguyen16-hessian,WZ24-trace, Cheng-Xu24-m-subharmonic,Fang25-continuity-hessian}.
\end{remark}
\section{Global Continuity of Monge-Amp\`ere Potentials on Resolutions of Singularities}

\label{sect: global}
It is natural to ask whether the solution $\varphi$ to the equation~\eqref{eq: cmae-big} is continuous on the whole of $X$. In this section, we provide an affirmative answer to this question when $\theta$ is the pullback of Hermitian form via birational and projective morphisms. 
\subsection{Continuity of potentials of Ricci flat currents}
To state our result, we fix some notation and terminology. Let $(Y,\omega_Y)$ be a compact, locally irreducible, normal variety equipped with a Hermitian form $\omega_Y$. We assume that the canonical bundle $K_Y$ is $\mathbb{Q}$-Cartier  and  $Y$ has log terminal singularities.
Let $\pi:X\to Y$ be a resolution of singularities. We have\[\pi^*K_Y=K_X+\sum_i a_iE_i \] where the $E_i$’s are exceptional divisors with simple normal crossings, and the rational coefficients $a_i$ (the discrepancies) satisfy $a_i>-1$.

Let $\sigma$ be a local non-vanishing holomorphic section of $K_Y^{\otimes r}$ and $h$ be a smooth metric of $K_Y$. We define the ``adapted volume form"
\[ \mu_{Y,h}:=\left( \frac{i^{rn^2}\sigma\wedge\Bar{\sigma}}{|\sigma|^2_{h^r}}\right)^{1/r}.\]
We note that this measure is independent of the choice of $\sigma$, and has finite mass on $X$ since the singularities are log-terminal. 



Let $\omega_X$ be a smooth Hermitian form on $X$. 
We denote by $f$ the density of $\pi^*\mu_{Y,h}$ with respect to the $\omega_X^n$. We observe that $f\in L^p$ since the singularities of $Y$ are log terminal; see e.g.,~\cite{eyssidieux2009singular}. 
Set $\theta:=\pi^*\omega_Y$. It follows from \cite{guedj2021quasi} that there exists a unique constant $c>0$ and a bounded $\theta$-psh function $\varphi$ such that 
\begin{equation}\label{eq: cmae-X}
    (\theta+\dc\varphi)^n=cfdV_X, \qquad\sup_X\varphi=0.
\end{equation}

\begin{theorem}\label{thm: global-cont}
    If $\varphi\in\PSH(X,\theta)\cap L^\infty(X)$ is a solution to \eqref{eq: cmae-X}, then $\varphi$ is continuous on $X$.  
\end{theorem}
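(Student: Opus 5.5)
We will follow the strategy of Kołodziej and of Dinew--Zhang: approximate $\theta=\pi^*\omega_Y$ by genuine Hermitian forms on $X$, solve non-degenerate equations with continuous solutions, and show that these solutions converge uniformly to $\varphi$. Since $\pi$ is a projective morphism, there is a $\pi$-exceptional effective divisor $E=\sum_i b_iE_i$ and a smooth metric $h_E$ on $\mathcal{O}(E)$ such that $\theta-\varepsilon_1\Theta_{h_E}(E)$ is a Hermitian form for $\varepsilon_1>0$ small. Hence $\psi_0:=\varepsilon_1\log|s_E|^2_{h_E}$ is a $\theta$-psh function with analytic singularities, and $\theta+\dc\psi_0\geq\varepsilon_0\omega_X$ on $X\setminus E$. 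Thus $\theta$ is big, $\Omega=X\setminus E$, and the results of Section~\ref{sect: local} apply. In particular, $\varphi$ is continuous (indeed H\"older) on $\Omega$.

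The first step is the approximation. For $t\in(0,1]$, set $\omega_t:=\theta+t\omega_X$, a Hermitian form. By the Hermitian theory of Kołodziej--Nguyen (or~\cite{guedj2021quasi}), there are continuous $\varphi_t\in\PSH(X,\omega_t)$ and constants $c_t>0$ with
\[
(\omega_t+\dc\varphi_t)^n=c_tf\,dV_X,\qquad \sup_X\varphi_t=0.
\]
The constants $c_t$ are uniformly bounded above and below, since $\theta$ is semipositive and big, so the volumes are comparable (\cite[Theorem 4.7]{BoucksomGuedjLu2025-volume}). The functions $\varphi_t$ are uniformly bounded, by the uniform $L^\infty$ estimate for families of forms dominating $\theta$ (\cite{guedj2021quasi}). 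Compactness in $L^1$ then gives a subsequential limit $\varphi_0$. We expect that $\varphi_0$ solves~\eqref{eq: cmae-X}, and uniqueness of bounded solutions (from the domination principle, Proposition~\ref{prop: domination}) should identify $\varphi_0$ with $\varphi$ and $\lim c_t$ with $c$.

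The key step is to upgrade this to uniform convergence $\varphi_t\to\varphi$ on all of $X$. Since each $\varphi_t$ is continuous, uniform convergence would make $\varphi$ continuous. We would run the stability argument of Theorem~\ref{thm :stability-big} in both directions, with auxiliary subsolutions from Lemma~\ref{lem: subsolution}.

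\emph{First direction.} We have $\varphi\in\PSH(X,\omega_t)$, and the comparison is against $\varphi_t$, whose Monge--Amp\`ere measure is $c_tf\,dV_X$ with density bounded in $L^p$ uniformly in $t$. The domination principle (applied with $\omega_t$, where no Zariski set is removed) then gives
\[
\sup_X(\varphi-\varphi_t)\leq C\|(\varphi-\varphi_t)_+\|_1^\gamma+o(1).
\]
The $o(1)$ term absorbs the discrepancy $c_t-c$.

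\emph{Second direction.} Consider $(1-\lambda)\varphi_t+\lambda\psi_0$ adjusted to be $\theta$-psh. The form $\omega_t$ exceeds $\theta$ by $t\omega_X$, and this excess can be absorbed by $\theta+\dc\psi_0\geq\varepsilon_0\omega_X$ once $\lambda\sim t/\varepsilon_0$. So $u_t:=(1-t/\varepsilon_0)\varphi_t+(t/\varepsilon_0)\psi_0\in\PSH(X,\theta)$. Theorem~\ref{thm :stability-big} then yields
\[
\sup_X(u_t-\varphi)\leq C\|(u_t-\varphi)_+\|_1^\gamma\to0.
\]

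The main obstacle is that $\psi_0=-\infty$ on $E$, so this second bound does not control $\varphi-\varphi_t$ from below near $E$. A lower bound of the form $\varphi\geq\varphi_t-o(1)$ on all of $X$ is exactly what continuity at points of $E$ requires. Our first idea is to exploit that $\theta$ is a pullback. The function $\varphi$ is $\pi$-fibrewise constant on exceptional fibres, because it is bounded and psh along compact fibres, where $\theta$ vanishes. So $\varphi$ descends to a bounded $\omega_Y$-psh function on $Y$. The singular set in $Y$ has codimension at least two, hence negligible, and continuity at a point $y$ of $\pi(E)$ reduces to controlling $\limsup$ versus the value at $y$. Our second idea is to adapt Kołodziej's original argument. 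Replace $\psi_0$ by the capacity-free estimate~\eqref{est: stability}, applied on the sublevel sets $\{\varphi<\varphi_t-\delta\}$. The subsolution $v$ from Lemma~\ref{lem: subsolution} satisfies $V_\theta-1\leq v$, and $V_\theta$ is \emph{bounded} here, since $\theta$ is semipositive. Therefore $\lambda v$ costs only $O(\lambda)$ in sup norm, with no $-\infty$ pole, and the $t\omega_X$ excess is handled by taking $\lambda^n\omega_v^n$ large against $t$-small terms. We expect the argument in this boundedness step, namely $V_\theta\geq-C$ uniformly in $t$, to be the crucial point, and it is where the pullback hypothesis is essential.
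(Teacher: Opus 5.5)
\textbf{There is a genuine gap.} Your first direction, $\varphi\le\varphi_t+o(1)$ on $X$, is essentially fine: $\varphi$ is $\omega_t$-psh, and a bounded $\theta$-subsolution makes the stability constants uniform in $t$. You do still owe a proof that $\varphi_t\to\varphi$ in $L^1$. Monge--Amp\`ere measures are not continuous under $L^1$ convergence, so ``we expect $\varphi_0$ solves the equation'' is not an argument.

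The real gap is the reverse inequality $\varphi_t\le\varphi+o(1)$ near $E$. You correctly identify it as the crux, but you do not prove it, and neither of your two ideas supplies it.
\begin{itemize}
\item \emph{Idea 1 only restates the problem.} After descending $\varphi$ to a bounded $\omega_Y$-psh function, $\limsup_{y\to x}\varphi(y)\le\varphi(x)$ is automatic from upper semicontinuity. Continuity at a singular point requires $\liminf_{y\to x}\varphi(y)\ge\varphi(x)$, and codimension-two negligibility says nothing about that.
\item \emph{Idea 2 conflates Monge--Amp\`ere mass with positivity of forms.} With $\theta$ as reference form, the domination principle needs the competitor $(1-\lambda)\varphi_t+\lambda v$ to be $\theta$-psh, i.e.\ $\lambda(\theta+dd^c v)\ge(1-\lambda)t\,\omega_X$ as currents. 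A lower bound on $(\theta+dd^c v)^n$ gives nothing of this kind. In fact no bounded $v$ can satisfy $\theta+dd^c v\ge\varepsilon\,\omega_X$ near an exceptional fibre, since $\pi^*\omega_Y$ vanishes on the positive-dimensional fibres of $\pi$ (restrict to a compact curve in a fibre and integrate). This is exactly why $\psi_0$ must have poles along $E$.
\item \emph{Switching the reference form to $\omega_t$ does not help either.} Then $\varphi$ is no longer a solution: $(\omega_t+dd^c\varphi)^n$ contains the mixed terms $t^{n-k}\omega_X^{n-k}\wedge(\theta+dd^c\varphi)^k$. These are small in mass but have no a priori density bound, so they cannot be dominated on a sublevel set by $\lambda^n(\theta+dd^c v)^n\ge\lambda^n c\,g\,dV_X$.
\item \emph{Boundedness of $V_\theta$ is not the issue.} Since $\theta\ge0$, $V_\theta=0$ trivially.
\end{itemize}

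\textbf{The missing idea.} The continuous approximants must be psh with respect to the degenerate form itself, not with respect to $\theta+t\omega_X$. If you had continuous $\theta$-psh functions $\varphi_j\searrow\varphi$ on $X$, Theorem~\ref{thm :stability-big} with $\psi=\varphi_j$ would immediately give $\sup_X(\varphi_j-\varphi)\le C\|\varphi_j-\varphi\|_1^\gamma\to0$. Such approximants exist only locally, and only after descending to $Y$, where $\omega_Y$ is genuinely positive. This is where the pullback hypothesis enters.

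\textbf{How the paper proceeds.} It argues by contradiction, in the spirit of Ko\l odziej and Cho--Choi:
\begin{itemize}
\item It takes the maximal jump $d=\sup_X(\varphi-\varphi_*)>0$ and a point $x_0$ minimizing $\varphi$ on the set where the jump is attained.
\item It picks a Stein neighbourhood $B$ of $\pi(x_0)$ and a smooth $\rho$ with $dd^c\rho\le\omega_Y$ and a strict minimum at $\pi(x_0)$.
\item It pushes $\varphi$ forward to an $\omega_Y$-psh function on $B$ and regularizes it there by decreasing smooth $\omega_Y$-psh functions (Lemma~\ref{lem: approx}, via Forn\ae ss--Narasimhan).
\item Pulled back, these give continuous $u_j\searrow u=\rho\circ\pi+\varphi$, all psh for the same degenerate form $\eta=\theta-dd^c(\rho\circ\pi)$.
\item A Hartogs-type lemma controls the $u_j$ near $\partial U$. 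The local stability estimate (Proposition~\ref{prop: stability}) then gives a uniform positive lower bound on the $\mu$-measure of a set where $u_j$ exceeds $u$ by a fixed positive amount.
\item Monotone convergence forces that measure to zero, contradicting $d>0$.
\end{itemize}
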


\begin{lemma}\label{lem: local-subsol} Let $g\in L^q(\Omega,dV_X)$ for some $q>1$ such that $\|g\|_q\leq 1$. 
    There exists a constant $c=c(q,\omega_X)$, and $\psi\in\PSH(\Omega,\eta)$ such that $-1\leq \psi\leq 0$ and \[(\eta+\dc\psi)^n\geq c^n{g dV_X}\quad\text{in}\;\Omega. \]
\end{lemma}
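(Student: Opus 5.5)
The plan is to obtain $\psi$ as a rescaled solution of a local Dirichlet problem for the flat Monge--Amp\`ere operator, and then to use that the Hermitian form $\eta$ only increases the Monge--Amp\`ere measure. I read the statement in the local setting in which it will be used: $\Omega$ is a coordinate ball (or a bounded strongly pseudoconvex domain) contained in a holomorphic chart of $X$, and $\eta$ is a Hermitian form there, e.g.\ $\eta=\omega_X$ or $\eta\geq 0$. In the chart, $dV_X$ is comparable to Lebesgue measure $dV$, with constants depending only on $\omega_X$ and the chart. Hence $\|g\|_{L^q(\Omega,dV)}\leq C'\|g\|_{L^q(\Omega,dV_X)}\leq C'$.

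\textbf{Step 1 (Dirichlet problem).} Extend $g$ by zero outside $\Omega$ if necessary, and work on a fixed ball $B\Supset\Omega$ (or on $\Omega$ itself if it is strongly pseudoconvex). By Ko\l odziej's solution of the Dirichlet problem with $L^q$ densities, $q>1$ (Bedford--Taylor theory plus his $L^\infty$ estimate), there is a unique continuous psh function $u$ on $B$ with
\[(\dc u)^n = g\,\frac{dV_X}{dV}\,dV \ \text{ in } B,\qquad u|_{\partial B}=0.\]
Ko\l odziej's estimate gives
\[\|u\|_{L^\infty(B)}\leq C(q,B)\,\big\|g\,dV_X/dV\big\|_{L^q(B)}^{1/n}\leq M,\]
with $M$ depending only on $q$ and $\omega_X$ (through the chart). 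By the maximum principle, $u\leq 0$.

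\textbf{Step 2 (rescaling).} Set $\psi:=u/\max(M,1)$ and $c:=1/\max(M,1)$. Then $-1\leq \psi\leq 0$ and $\psi$ is psh, so $\dc\psi\geq 0$.

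\textbf{Step 3 (comparison of Monge--Amp\`ere measures).} Since $\eta\geq 0$ is smooth and $\dc\psi\geq 0$ with $\psi$ bounded, the Bedford--Taylor expansion
\[(\eta+\dc\psi)^n=\sum_{k=0}^n\binom nk \eta^{n-k}\wedge(\dc\psi)^k\]
is a sum of positive measures; here one uses the Hermitian version of Bedford--Taylor theory from \cite{dinew2012pluripotential,kolodziej2015weak}. Therefore
\[(\eta+\dc\psi)^n\geq(\dc\psi)^n=c^n g\,dV_X \quad\text{in }\Omega,\]
and in particular $\psi\in\PSH(\Omega,\eta)$.

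\textbf{Main obstacle.} The delicate point is the uniform $L^\infty$ bound in Step 1, i.e.\ Ko\l odziej's estimate, which is exactly where $q>1$ is needed. It must be quoted with constants depending only on $q$ and the geometry of the domain, so that $c$ depends only on $q$ and $\omega_X$. If $\Omega$ were not contained in a single chart, I would first try to reduce to finitely many coordinate balls. The statement as used, however, should be local.
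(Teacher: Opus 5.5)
Your Steps 1--3 are correct when $\Omega$ lies inside a single coordinate chart. Step 3 only needs $\eta\ge 0$, so reading $\eta$ as Hermitian is harmless. But the chart restriction is a genuine gap: you assert that the statement as used is local, and it is not.

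\textbf{Why the chart case is not enough.} The lemma is stated in the setting of Proposition~\ref{prop: stability}: $\Omega\Subset X$ is a strictly pseudoconvex domain and $\eta$ is only semipositive and big. It is applied in the proof of Theorem~\ref{thm: global-cont} on a neighbourhood $U$ of $x_0$ obtained by pulling back a Stein neighbourhood of $y_0$. Such a $U$ contains the compact connected fibre $\pi^{-1}(y_0)$. This fibre is positive-dimensional when $y_0$ is a singular point of the normal variety $Y$ (Zariski's main theorem), which is precisely the case that matters. Moreover $\eta$, being pulled back from $Y$, degenerates along it.

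So $U$ lies in no chart and carries no strictly psh function, since every psh function on $U$ is constant on $\pi^{-1}(y_0)$. There is no coordinate ball containing $\Omega$ on which to run Ko\l odziej's flat Dirichlet problem. Your fallback of reducing to finitely many coordinate balls does not help either. The resulting solutions live on different balls, none is defined on all of $\Omega$, and gluing them by maxima needs boundary inequalities you have no control over. Tellingly, your argument never uses that $\eta$ is big.

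\textbf{The missing idea.} This is the paper's idea, following Guedj--Lu. Work with $\omega_X$-psh rather than psh functions: the paper solves the Hermitian Dirichlet problem of Ko\l odziej--Nguyen directly on $\Omega$, and $\omega_X$-psh functions are not forced to be constant on compact subvarieties. Then use the bigness of $\eta$ to transfer positivity from $\omega_X$ to $\eta$.

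Take an $\eta$-psh $\rho$ with $\eta+dd^c\rho\ge\varepsilon_0\omega_X$ and $\sup_X\rho=-1$. Since $\rho$ is quasi-psh, $(-\rho)^{2n}g\in L^{q'}$ for every $1<q'<q$. Solve $(\omega_X+dd^c\phi)^n=(-\rho)^{2n}g\,dV_X$ in $\Omega$ with $\phi=-1$ on $\partial\Omega$. This gives a uniform bound on $\|\phi\|_\infty$; subtract a constant so that $\phi\le 0$.

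Then $v:=\rho+\varepsilon_0\phi\le -1$, and $-1/v$ takes values in $(0,1]$ despite the poles of $\rho$. Since $t\mapsto -1/t$ is convex increasing on $(-\infty,0)$, $\eta\ge 0$ and $v^{-2}\le 1$, we get
\[\eta+dd^c(-1/v)\ \ge\ \eta+v^{-2}dd^c v\ \ge\ v^{-2}(\eta+dd^c v)\ \ge\ \varepsilon_0 v^{-2}(\omega_X+dd^c\phi).\]
Hence $(\eta+dd^c(-1/v))^n\ge\varepsilon_0^n\big(\tfrac{-\rho}{-v}\big)^{2n}g\,dV_X\ge\varepsilon_0^n(1+\varepsilon_0\|\phi\|_\infty)^{-2n}g\,dV_X$. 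The weight $(-\rho)^{2n}$ in the density is there exactly to cancel the degeneration $v^{-2n}$, and $\psi:=-1/v-1$ does the job.

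Where your route applies (a chart, or more generally a domain carrying a strictly psh function near $\bar\Omega$), it is simpler and even yields a genuinely psh subsolution. It just cannot reach the domains for which the lemma is needed.
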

\begin{proof} The proof is almost identical to that of~\cite[Lemma 2.1]{guedj2021quasi}.
Since $\eta$ is big, we can find a $\eta$-psh function $\rho$ on $X$ such that $\eta+\dc\rho\geq \varepsilon_0\omega_X$. We may assume that $\sup_X\rho=-1$. 
By the H\"older inequality, we infer that $(-\rho)^{2n}g\in L^q(\Omega,dV_X)$ for some $1<q<p$.  
    It follows from~\cite[Theorem 4.2]{kolodziej2015weak} that there exists a unique continuous solution $\phi\in\PSH(\Omega,\omega_X)$ to the equation
    \begin{equation*}
        \begin{cases}
            (\omega_X+\dc \phi)^n=(-\rho)^{2n}{gdV_X} \quad \text{in}\;\Omega,\\
            \phi|_{\partial\Omega}=-1.
        \end{cases}
    \end{equation*} We have $\|\phi\|_\infty\leq C\|f\|_p^{1/n}$ where $C$ depends on $p$ and $\omega_X$.
We set $\psi:=-\frac{1}{\rho+\varepsilon_0 \phi}$. Observe that
\begin{align*}
    \eta+\dc\psi&\geq (-\rho-\varepsilon_0\phi)^{-2}\varepsilon_0(\omega_X+\dc\phi)
\end{align*} since $\eta\geq 0$. It follows that $$(\eta+\dc \psi)^n\geq \varepsilon_0^n\frac{(-\rho)^n}{(-\rho-\varepsilon_0\phi)^{2n}}{gdV_X}\geq c^n{gdV_X} $$
for $c>0$ depending on $\varepsilon_0$, $n$ and $\|\phi\|_\infty$. 
\end{proof}
\begin{proposition}\label{prop: stability}
Let $\Omega$ be a relatively compact strictly
pseudoconvex domain in $X$. Let $\eta$ be a semipositive and big (1,1) form on $X$. Assume that $u,v\in\PSH(\Omega,\eta)\cap L^\infty(\Omega)$ such that $\liminf_{\partial\Omega}(u-v)\geq 0$ and 
\[(\eta+\dc u)^n=fdV\qquad\text{on}\;\Omega \]
  where $f\in L^p(\Omega,dV_X)$ for some $p>1$. Let $\mu=fdV_X|_\Omega$. Fix any $q>1$.
  Then, there exists a constant $C=C(n,p,q,\|f\|_p,\|v\|_\infty)$ such that
  for any $\delta>0$,
  \[\sup_{\Omega}(v-u)\leq \delta+C\mu{(u<v-\delta)}^{\frac{1}{nq}}. \]
\end{proposition}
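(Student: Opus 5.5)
We follow the scheme of the proof of Theorem~\ref{thm :stability-big}, with the global subsolution of Lemma~\ref{lem: subsolution} replaced by the local one of Lemma~\ref{lem: local-subsol}, and the domination principle replaced by the comparison principle on $\Omega$.

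\emph{Step 1: a normalized density.} Fix $\delta>0$. If $\mu(u<v-\delta)=0$, the estimate follows from the comparison principle applied directly, as explained in Step 4. So assume $m:=\mu(u<v-\delta)>0$. Choose $r>1$ with $\frac1r=\frac1p\big(1-\frac1q\big)+\frac1q$, and set
\[
F=\frac{\mathbf 1_{\{u<v-\delta\}}f}{m^{1/q}}.
\]
Exactly as in the computation preceding Theorem~\ref{thm :stability-big}, H\"older's inequality gives $\|F\|_r\le \|f\|_p^{1-1/q}$. After dividing $F$ by this constant, Lemma~\ref{lem: local-subsol} applies with exponent $r$. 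It produces $w\in\PSH(\Omega,\eta)$ with $-1\le w\le 0$ and
\[
(\eta+\dc w)^n\ge c^n F\,dV_X \quad\text{on } \Omega,
\]
where $c$ depends only on $n,p,q,\|f\|_p$ and $\omega_X$.

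\emph{Step 2: the competitor.} For $\lambda\in(0,1)$, set $\phi:=(1-\lambda)v+\lambda w-\delta-\lambda\|v\|_\infty$.

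First, since $w\le 0$ and $v\le \|v\|_\infty$, we get $\phi\le v-\delta-\lambda(v-w)-\lambda\|v\|_\infty\le v-\delta$. Hence $\{u<\phi\}\subset\{u<v-\delta\}$. Moreover $\liminf_{\partial\Omega}(u-\phi)\ge\liminf_{\partial\Omega}(u-v)+\delta\ge\delta>0$.

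Second, $\eta$ is semipositive and both $v$ and $w$ are $\eta$-psh, so
\[
(\eta+\dc\phi)^n\ge\lambda^n(\eta+\dc w)^n.
\]
On $\{u<\phi\}$ this yields
\[
(\eta+\dc\phi)^n\ge \lambda^n c^n m^{-1/q} f\,dV_X=\lambda^n c^n m^{-1/q}(\eta+\dc u)^n.
\]

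\emph{Step 3: choice of $\lambda$.} If $m^{1/q}\ge c^n/2$, the estimate is trivial: the left side is at most $\delta+2\|v\|_\infty+\|u\|_\infty$. We must check that $\|u\|_\infty$ is controlled. I would bound it by applying Step 4 to $u$ against a fixed subsolution, or alternatively just use $\sup_\Omega(v-u)\le \sup(v-\phi)+\sup(\phi-u)$ and absorb. Otherwise, take $\lambda=2^{1/n}c^{-1}m^{1/(nq)}<1$. Then $(\eta+\dc\phi)^n\ge 2(\eta+\dc u)^n$ on $\{u<\phi\}$.

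\emph{Step 4: comparison, the main obstacle.} We need a comparison/domination principle for bounded $\eta$-psh functions on the strictly pseudoconvex $\Omega$, for a \emph{non-closed} $\eta$. Namely, if $\liminf_{\partial\Omega}(u-\phi)\ge0$ and $(\eta+\dc\phi)^n\ge 2(\eta+\dc u)^n$ on $\{u<\phi\}$, then $\{u<\phi\}=\emptyset$.

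Our first attempt is to argue by contradiction, in the style of Kolodziej--Nguyen. Perturb to $\phi_\epsilon=\phi+\epsilon\rho_\Omega$, where $\rho_\Omega$ is a strictly psh defining function; this makes the mass of $(\eta+\dc\phi_\epsilon)^n$ strictly positive near the sublevel set. Then apply the modified comparison principle of \cite{kolodziej2015weak}, which on a small set $\{u<\phi_\epsilon+s\}$ controls the torsion terms by $(1+C s)$ factors, and combine it with the factor $2>1$.

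Once this holds, we get $u\ge\phi$ on $\Omega$, so
\[
v-u\le \delta+\lambda(v-w)+\lambda\|v\|_\infty\le \delta+(2\|v\|_\infty+1)\lambda,
\]
which gives the claim with $C=(2\|v\|_\infty+1)2^{1/n}c^{-1}$.
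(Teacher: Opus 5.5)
Your argument is the paper's proof. It uses the same normalized density $\mathbf 1_{\{u<v-\delta\}}f/\mu(u<v-\delta)^{1/q}\in L^r$, the bounded subsolution of Lemma~\ref{lem: local-subsol}, the competitor $(1-\lambda)v+\lambda w$ lowered by $\delta+C(\|v\|_\infty)\lambda$ with $\lambda\simeq\mu(u<v-\delta)^{1/(nq)}$, and a comparison principle on $\Omega$. Your Step 4 is exactly the local domination principle that the paper simply quotes as \cite[Lemma 4.1]{BoucksomGuedjLu2025-volume}, so you can cite it and drop the Ko\l odziej--Nguyen perturbation sketch. Your choice $\lambda=2^{1/n}c^{-1}m^{1/(nq)}$, which produces the factor $2$, fits a domination principle with constant in $[0,1)$ (as in Proposition~\ref{prop: domination}). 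The paper's $\lambda=c^{-1}\mu(u<v-\delta)^{1/(nq)}$ only gives the factor $1$.

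The one real loose end is the case $m^{1/q}\ge c^n/2$. You rightly flag it, and the paper dismisses it with ``we are done''. Since $C$ may not depend on $\|u\|_\infty$, one needs an a priori bound on $\sup_\Omega(v-u)$.

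Your second fix is circular. It relies on $u\ge\phi$, which needs the factor-$2$ competitor with $\lambda<1$, and that is exactly what fails in this case.

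Your first fix does not work with the $\eta$-psh subsolution of Lemma~\ref{lem: local-subsol}. Its guaranteed mass $c^n g$, with $\|g\|_r\le 1$, does not dominate $2f\,dV_X$ when $\|f\|_p$ is large, and $\eta$-psh functions cannot be multiplied by large constants.

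It does work with a \emph{genuinely} psh subsolution, using $\eta\ge 0$. Let $\tilde w=\phi+A\rho_\Omega$, where $\phi$ solves $(\omega_X+dd^c\phi)^n=2f\,dV_X$ with $\phi|_{\partial\Omega}=0$ \cite[Theorem 4.2]{kolodziej2015weak}, $\rho_\Omega$ is a strictly psh defining function of $\Omega$, and $A\,dd^c\rho_\Omega\ge\omega_X$. Then $\tilde w$ is psh, vanishes on $\partial\Omega$ (so $\tilde w\le 0$), is bounded in terms of $\Omega$ and $\|f\|_p$, and satisfies $(\eta+dd^c\tilde w)^n\ge(dd^c\tilde w)^n\ge 2(\eta+dd^cu)^n$. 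The domination principle applied to $u$ and $\tilde w-\|v\|_\infty-\varepsilon$ gives $u\ge\tilde w-\|v\|_\infty$. Hence $\sup_\Omega(v-u)\le 2\|v\|_\infty+\|\tilde w\|_\infty$, which settles that case.

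Alternatively, allow $C$ to depend on $\|u\|_\infty$. This is harmless in the proof of Theorem~\ref{thm: global-cont}, where the proposition is applied with a fixed function in place of $u$ and only $v=u_j$ varies.
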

\begin{proof} The proof is very close to that of Theorem~\ref{thm :stability-big}, we include it briefly for readers' convenience.
   We assume that $\mu(u<v-\delta)>0$. 
    We set $$g_\delta=\frac{\mathbf{1}_{\{u<v-\delta\}}f}{\mu(u<v-\delta)^{\frac{1}{q}}}.$$ We see that $g_\delta\in L^r(\Omega,dV_X)$ with $\frac{1}{r}=\frac{1}{p}\big(1-\frac{1}{q}\big)+\frac{1}{q}$; following the same arguments as in the proof of Theorem~\ref{thm :stability-big}. 
    
    Let $\psi$ be a $\eta$-psh function defined in Lemma~\ref{lem: local-subsol} for $g_\delta$ and $r>1$.
    We set $\varphi:=(1-\lambda)v+\lambda\psi$, 
    where $\lambda\in (0,1)$ is chosen hereafter. 
   There is a constant $C=C(\|v\|_\infty)>0$ such that $$u\geq \varphi-\delta-C\lambda\; \text{on}\; \partial\Omega$$ and \[ \{\varphi<u-\delta-C\lambda \}\subset\{ u<v-\delta\}.\]
   We compute
    \[\mathbf{1}_{\{\varphi<u-\delta-C\lambda \}}(\eta+\dc \varphi)^n\geq\lambda^n\mathbf{1}_{\{\varphi<u-\delta-C\lambda \}}(\eta+\dc \psi)^n\geq\lambda^nc^n{g_\delta dV_X}. \]
    If $\mu(u<v-\delta)^{\frac{1}{nq}}\geq c$, we are done. Otherwise, if we choose $\lambda=c^{-1}\mu(u<v-\delta)^{\frac{1}{nq}}\in (0,1)$, then 
    \[\mathbf{1}_{\{ u<\varphi-\delta-C\lambda\}}(\eta+\dc \varphi)^n\geq \mathbf{1}_{\{ u<\varphi-\delta-C\lambda\}}fdV_X|_\Omega=\mathbf{1}_{\{ u<\varphi-\delta-C\lambda\}}(\eta+\dc u)^n. \]
    By~\cite[Lemma 4.1]{BoucksomGuedjLu2025-volume}, we get that $u\geq \varphi-\delta-C\lambda$ on $\Omega$. This implies the desired estimate.
\end{proof}

\begin{lemma}\label{lem: approx}
    Let $B$ be a Stein space and $\phi\in\PSH(B,\omega)$ where $\omega$ is a Hermitian form on $B$. Then there exists
    a decreasing sequence of smooth functions $\phi_j\in\PSH(B',\omega)$ such that $\phi_j\searrow\phi$.
\end{lemma}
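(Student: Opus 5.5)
The plan is to reduce to the classical statement for plurisubharmonic functions by writing $\omega$ locally as $\dc$ of a smooth potential. Then we glue local approximations with a partition-of-unity type argument, or use a single global potential on a slightly smaller Stein neighbourhood. Here $B'\Subset B$ denotes a relatively compact Stein open subset, say a sublevel set $\{\rho_B<c\}$ of a smooth strictly psh exhaustion $\rho_B$ of $B$.

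First, since $\omega$ is a smooth real $(1,1)$-form, I claim there is a smooth function $h$ on a neighbourhood of $\overline{B'}$ with $\dc h\geq \omega$ there. Indeed, $\dc\rho_B$ is a Kähler form on $B$. By compactness of $\overline{B'}$ there is $A>0$ with $A\,\dc\rho_B\geq\omega$ near $\overline{B'}$, so we may take $h=A\rho_B$. Then $u:=\phi+h$ satisfies $\dc u=\dc\phi+\dc h\geq \omega+\dc\phi-\omega+\dc h-\omega$; more precisely $\dc u\geq (\omega+\dc\phi)+(\dc h-\omega)\geq 0$. So $u$ is plurisubharmonic near $\overline{B'}$.

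Second, I apply the Fornæss–Narasimhan theorem, that psh functions on Stein spaces are decreasing limits of smooth strictly psh functions. On a Stein manifold this can also be done by Richberg/Demailly-type regularization, by embedding $B$ into $\mathbb{C}^N$ and using a holomorphic retraction of a neighbourhood (Docquier–Grauert/Siu). This gives smooth psh functions $u_j\searrow u$ on $B''$ with $B'\Subset B''\Subset B$. Then I set $\phi_j:=u_j-h$. These are smooth on $B'$, decrease to $\phi$, and satisfy $\omega+\dc\phi_j=\dc u_j+(\omega-\dc h)$.

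The main obstacle is exactly this last identity: $\omega-\dc h\leq 0$, so positivity is not automatic. To fix it I instead choose $h$ with $\dc h=\omega$ exactly whenever possible. When $\omega$ is not closed that is impossible, so the argument must be more careful. My fallback is to avoid the global potential and work as follows. Cover $\overline{B'}$ by finitely many small balls $U_k$ in which $\omega$ is comparable to a constant-coefficient form, with $|\omega-\dc h_k|\leq\varepsilon_k\,\dc|z|^2$ for a quadratic $h_k$. On each ball I regularize $\phi+h_k$ by convolution, obtaining smooth $\phi_{k,j}$ with $\omega+\dc\phi_{k,j}\geq -\varepsilon_k\omega$. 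I then glue these by Richberg's patching (a regularized maximum with bump corrections $\varepsilon(\chi_k-1)$), losing only an arbitrarily small multiple of $\omega$.

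Finally, I absorb the loss $-\varepsilon_j\omega$ with $\varepsilon_j\searrow0$ using the strictly psh exhaustion. Replace $\phi_j$ by $(1-\varepsilon_j')\phi_j+\varepsilon_j'(\text{a bounded smooth }\omega\text{-psh function on }B')$. Such a function exists because $B'$ is Stein: take $C\rho_B$ with $C$ large, which is $\omega$-psh, indeed $\omega+\dc(C\rho_B)\geq\omega>0$. Then I add small constants $2^{-j}$ to restore monotonicity. This yields a decreasing sequence of smooth $\omega$-psh functions on $B'$ converging to $\phi$.
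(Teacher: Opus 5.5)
Your diagnosis of the obstacle is correct: with $h=A\rho_B$ the error $\omega-dd^c h\le 0$ is not small, and an exact potential with $dd^c h=\omega$ cannot exist when $d\omega\neq 0$. The fallback, however, does not prove the lemma, for two concrete reasons.

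First, $B$ is a Stein \emph{space}. In the proof of the global continuity theorem it is a neighbourhood of $y_0=\pi(x_0)$, and the case that matters is $y_0\in Y_{\rm sing}$. At singular points there are no coordinate balls to convolve on. The Docquier--Grauert retraction is not available either, since a holomorphic retract of a manifold is smooth.

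Second, even on a manifold, Richberg's patching needs the local regularizations to be uniformly close on overlaps. That holds for continuous $\phi$ but fails for merely usc $\phi$, and here $\phi=\pi_*\varphi$ is precisely a function not yet known to be continuous. For instance, take $\phi=\log|z|$ and the charts $z$ and $w=2z$: the two convolutions at $0$ differ by $\log 2$ for every $\delta$. So if $0$ lies in the transition zone of $U_k$ and the core of $U_l$, a correction $\varepsilon(\chi_k-1)$ with small $\varepsilon$ cannot stop the $k$-th term from realizing the maximum near $\partial U_k$. The regularized maximum is then not a well-defined $\omega$-psh function there. Handling usc functions is exactly the nontrivial content of Forn{\ae}ss--Narasimhan, or of B{\l}ocki--Ko{\l}odziej on manifolds.

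On a Stein manifold you can avoid gluing altogether by pushing your own retraction idea further: convolve $u=\phi\circ r$ globally in $\mathbb C^N$. Since $dd^c$ commutes with convolution, $r^*\omega+dd^c(u*\rho_\delta)\ge r^*\omega-(r^*\omega)*\rho_\delta\ge -o(1)\,dd^c|z|^2$ on compacts. Because $u+A|z|^2$ is psh, $u*\rho_\delta+Ac_n\delta^2\searrow u$. Your last step then restores exact positivity while keeping monotonicity; it already works with $(1-\varepsilon_j)\phi_j$ and $\phi_j\le 0$, since $\omega>0$. In the singular case an input like Forn{\ae}ss--Narasimhan seems unavoidable.

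The paper uses that input differently. It picks $v_j$ with $\omega\le dd^c v_j\le(1+\varepsilon_j)\omega$ and applies Forn{\ae}ss--Narasimhan to the psh function $v_j+\phi$. It then sets $\phi_j=(1+\varepsilon_j)^{-1}(\psi_{j,j}-v_j)$, so that $\omega+dd^c\phi_j\ge \omega-(1+\varepsilon_j)^{-1}dd^c v_j\ge 0$, with no local regularization or gluing. Your own objection bears on that step, though. On a fixed open set, such $v_j$ with $\varepsilon_j\to 0$ force $dd^c v_j\to\omega$ uniformly, hence $d\omega=0$. So that route is justified only where $\omega$ is closed, or on sets shrinking with $j$.
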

\begin{proof} We consider a sequence $\varepsilon_j\searrow 0$ as $j\to\infty$.
    By the continuity of $\omega$, for each $\varepsilon_j$, we can find $v_j\in\mathcal{C}^\infty(B)$ such that
    \[0\leq \dc v_j-\omega\leq \varepsilon_j\omega. \]
   Since $B$ is Stein it follows from~\cite[Theorem 5.5]{fornaess1980levi} there exists a smooth functions $\psi_{j,k}\in\PSH(B)\cap\mathcal{C}^\infty(B)$ such that $\psi_{j,k}\searrow v_j+\phi$ as $k\to \infty$. We set \[\phi_j=\frac{1}{1+\varepsilon_j}(\psi_{j,j}-v_j) \]
   which belongs to $\PSH(B,\omega)\cap\mathcal{C}^\infty(B)$ and $\phi_j\searrow\phi$ as $j\to\infty$.
\end{proof}

\begin{proof}[Proof of Theorem~\ref{thm: global-cont}] Denote $\varphi_*$ the lower semicontinuous regularization of $\varphi$.
Suppose by contradiction that $\varphi$ is not continuous on $X$, then we have \[d:=\sup_X(\varphi-\varphi_*)>0. \]
Set $F:=\{x\in X: \varphi(x)-\varphi_*(x)=d \}$. We observe that $F$ is a closed nonempty set since $\varphi-\varphi_*$ is a bounded upper semicontinuous function on the compact set $X$. Since $\varphi|_F$ is continuous, we can pick $x_0\in X$ such that \[\varphi(x_0)=\min_F\varphi. \]
Set $y_0=\pi(x_0)$.
It follows from ~\cite[Lemma 3.1]{ChoChoi25-continuity} we can choose an open Stein neighborhood $B$ of $y_0$ and $\rho\in\mathcal{C}^\infty(B)$ such that $\omega_Y\geq \dc \rho>0$ on $B^{\rm reg}$ and \[\rho(y_0)<\rho(y) \;\text{for each}\; y\in B\backslash\{y_0\}.\]
Define
\begin{equation}
    b:=\inf_{S}\rho\circ\pi-\rho(y_0)>0. 
\end{equation}

Moreover, each component of the pre-image of $B$ is birational to it. We choose one $\Tilde{B}$, containing $x_0$. 
We define the push-forward $\pi_*\varphi$ of $\varphi$ on $U$ as follows
\[\pi_*\varphi(y)=\begin{cases}
    \varphi(x), y\in Y^{\rm reg},\pi(x)=y\\
    \limsup_{\zeta\in \Tilde{B}\backslash E,\pi(\zeta)\to y}\varphi(\zeta).
\end{cases} \] We observe that $\phi:=\pi_*\varphi\in \PSH(B,\omega_Y)\cap L^\infty(B)$.

Set $u:=\rho\circ\pi+\varphi$ and $\eta:=\theta-\dc \rho\circ \pi$. We see that $\eta$ is semipositive and big and $u\in\PSH(X,\eta)$ solves
\[(\eta+\dc u)= fdV_X \quad\text{on}\; U.\]

We also have that \[\sup_{\Bar{U}}(u-u_*)=\sup_{\Bar{U}}(\varphi-\varphi_*)=d. \]
For each $a\in [0,d]$, we consider
 \[E(a)=\{z\in \Bar{U}:\varphi(z)-\varphi_*(z)\geq d-a\} \ni z_0,\] and \[c(a):=\varphi(x_0)-\inf_{E(a)}\varphi. \] 
We observe that $E(a) $ is a compact set for any $a\in [0,d]$ such that $E(a)\searrow E(0)$ and $c(a)$ is a nondecreasing function. 

\noindent\textbf{Claim 1.} $\lim_{a\to 0}c(a)=0$.

Since $\liminf_{a\to 0} c(a)\geq 0$, it is enough to show that $\limsup_{a\to 0} c(a)\leq 0$. Suppose by contradiction that  \[ \limsup_{a\to 0} c(a)> 2\varepsilon\] for some $\varepsilon>0$. Then, there exists a sequence $a_j>0$ such that $c(a_j)>\varepsilon$ for every $j\geq 1$, namely,
\[\inf_{E(a_j)}\varphi<\varphi(x_0)-\varepsilon. \]

Since $\varphi_*$ is lower semicontinuous on each compact set $E(a_j)$, there exists a point $x_j\in E(a_j)$ such that $\varphi(x_j)<\varphi(x_0)-\varepsilon$. Let $x$ be a limit point of $x_j$. Then   
\[\varphi(x)\geq \liminf_j\varphi(x_j)\geq \liminf_j(\varphi_*(x_j)+d-a_j)\geq \varphi_*(x)+d  \]
so $x\in E(0)$. Hence,
\[ \limsup_j \varphi(x_j)\leq \varphi(x_0)-\varepsilon.\]
By the upper semicontinuity of $-\varphi_*$, we have 
\[ d=\limsup_j[ \varphi(z_j)-\varphi_*(z_j)]\leq  \varphi(x_0)-\varepsilon-\varphi_*(x_0)=d-\varepsilon\] which gives a contradiction. This completes the Claim 1.

\medskip
 By Lemma~\ref{lem: approx}, we can find a sequences of smooth $\omega_Y$-psh functions $\{\phi_j \}$ decreasing to $\phi$.  The following claim is a variation of Hagtogs' lemma.

 \smallskip

 \noindent\textbf{Claim 2.} Let $c>0$, $t>1$. If $u-tu_*< c$ on  a compact set $K\subset U$, then there exists a positive integer $j_0=j_0(K,t)$ such that \[u_j\leq tu-c, \quad\text{for}\, j\geq j_0.\]

\smallskip
The proof of the claim is almost identical to that of~\cite[Corollary 2.9]{ChoChoi25-continuity}, so we omit the detail here.

\medskip
Up to adding a positive constant, we can assume that $u> 0$ on $U$ and $A:=u(x_0)>d$. 
By the Claim 1, we can choose $a_0\in (0,d)$ such that \[ c(a)<\frac{b}{3}\] for any $a\leq a_0$. Also, we choose $t>1$ so that
\begin{equation}\label{eq: a0t}
    (t-1)(A-d)< a_0<(t-1)\left(A-d+\frac{2b}{3}\right).
\end{equation} 
\textbf{Claim 3.} There exists a open neighborhood $V$ of $S=\partial U$  and  $j_0\in\mathbb Z_{>0}$ such that \[ u_j<tu+d-a_0\; \text{for}\; j\geq j_0. \]

\textit{Case 1.} $z\in S\cap E(a_0)$. It follows that
\begin{align*}
    u_*(z)&\geq \rho(\pi(x_0))+b+ \varphi(z)-d\\&\geq\rho(\pi(x_0))+b+\varphi(x_0)-c(a_0)-d \\&\geq A-d+\frac{2b}{3}.
\end{align*} 
Hence, by \eqref{eq: a0t}, $(t-1)u_*(z)>a_0$, it follows that  \[ u(z)\leq u_*(z)+d<tu_*+d-a_0.\] 
Hartogs' lemma (Claim 2) yields there exists an open neighborhood $V_1$ of the compact set $S\cap E(a_0)$
and $j_1\in\mathbb{Z}_{>0}$ such that
 \[ u_j<tu_*+d-a_0\] on $V_1$, for $j\geq j_1$.

\textit{Case 2.} $E(a_0)\cap (S\setminus V_1)=\varnothing$. We have 
\[ u<u_*+d-a_0\] on $S\setminus V_1$. We apply Hartogs' lemma again to obtain that there exists a neighborhood $V_2$ of $S\setminus V_1$  and $j_2\in\mathbb Z_{>0}$ such that \[ u_j<u_*+d-a_0<tu_*+d-a_0\] on $V_2$, for $j\geq j_2$. Therefore, setting $V=V_1\cup V_2$ and $j_0=\max(j_1,j_2)$, we obtain the claim.

\medskip

Now, we are ready to obtain a contradiction.
Set $w=tu+d-a_0$ and for $c>0$
\[ W(j,c):=\{x\in \Bar U: w(x)+c<u_j(x) \}.\]
By~\eqref{eq: a0t}, we have \[a_0>(t-1)(A-d)=(t-1)u_*(x_0)=tu_*(x_0)+d-u(x_0) \]
hence, there exists a positive constant $a_1>0$ such that
\[  tu_*(x_0)+d<a_0+u(x_0)-a_1<a_0+u_j(x_0)-a_1.\]
This implies that the sets $W(j,c) $ for $c\in (0,a_1)$ contain some points near $x_0$, and so they are non-empty. We see also that by Claim 3, $W(j,c)$ is relatively compact in $U$ for $j\geq j_0$. It follows that \[\sup_U(u_j-w)> \frac{a_1}{2}\quad\text{for},\; j\geq 1. \]  
We apply Proposition~\ref{prop: stability} with $v=u_j$, $u=w\in\PSH(U,t\eta)$ and $\delta=\frac{a_1}{4}$ to obtain  \[\frac{a_1}{4}\leq Ct\mu\left(\left\{w<u_j-\frac{a_1}{4}\right\}\right)^{1/nq} \] for a uniform constant $C>0$. Furthermore, for such a fixed $a_1>0$ we have
\[W\left(j,\frac{a_1}{4}\right)\subset \left\{ x\in \Bar U: u(x)+d-a_0+\frac{a_1}{4}<u_j(x) \right\}\subset \{u<u_j\}. \]
Since $\mu(u<u_j)\to 0$ as $j\to+\infty$, we get the contradiction. This completes the proof.
\end{proof}
\subsection{Continuity of potentials of pluripotential Chern–Ricci flows.} We apply the continuity result for solutions to the elliptic Monge-Amp\`ere equation to prove the continuity of solutions to the parabolic one. 

As in the previous section, we assume that $Y$ is a compact hermitian variety with log-terminal singularities. Let $\pi:X\to Y$ be a log resolution of singularities.  

We consider  the following  parabolic complex Monge–Ampère type equation
	\begin{align*}\label{cmaf} \tag{CMAF}
	dt\wedge(\omega_t+\dc\f_t)^n=e^{\partial_t{\f}_t+F(t,x,\f_t)}f(x)dV_X(x)\wedge dt
	\end{align*}
	in $X_T:=(0,T)\times X$, where 
	\begin{itemize}
		\item $(\theta_t)_{t\in [0,T]}$ is a smooth family of  (1,1) forms such that $ \theta_t=\pi^*\omega_t$ where $\omega_t$ is a family of Hermitian forms on $Y$, and
		\begin{equation}\label{assum: omega}
	    -A\theta_t\leq \partial_t{\theta}_t\leq A\theta_t\;\, \text{and}\;\, \partial^2_{tt}{\theta}_t\leq A\theta_t,\; \forall \, t\in [0,T],
	\end{equation} for some fixed constant $A>0$;
    
		
		\item $(t,x,r)\mapsto F(t,x,r)$ is continuous on $[0,T]\times X\times \mathbb{R}$, quasi-increasing in $r$, uniformly Lipschitz in $(t,r)$, and uniformly convex in $(t,r)$;
		
		\item $0\leq f\in L^p(X)$ for some $p>1$, and $f>0$ almost everywhere.
		
		 \item $\f:[0,T]\times X\rightarrow \mathbb{R}$ is an unknown function, with $\f_t:=\f(t,\cdot)$. 
	\end{itemize}
    When $F\equiv 0$ and $\omega_t=\omega_Y+t\chi+\dc\rho_t>0$ for $\rho_t\in\mathcal{C}^\infty(X)$, then the flow \eqref{cmaf} is called the {\em weak Chern-Ricci} flow.
    Generalizing the work of Guedj-Lu-Zeriahi~\cite{guedj2020pluripotential}, we proved in~\cite{dang24-chern} the existence and uniqueness of this flow.
\begin{theorem}
    { Let $\f_0$ be a bounded $\omega_0$-psh function. Then there exists a unique solution $\f_t\in\PSH(X,\omega_t)\cap L^\infty(x)$ to \eqref{cmaf} such that $\f_t\to\f_0$ as $t\to 0^+$ in $L^1(X)$ and for all $0<T'<T$,
\begin{itemize}
     \item $(t,x)\mapsto\f(t,x)$ is bounded in $[0,T')\times X$,
    \item $t\mapsto\f_t$ is uniformly semi-concave in $(0,T')\times X$,
    \item \[n\log t-C\leq \partial_t\varphi\leq \frac{C}{t},\quad \text{on}\; (0,T')\times X \]
    for a uniform $C>0$.
\end{itemize}}
\end{theorem}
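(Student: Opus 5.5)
Since the paper cites \cite{dang24-chern} for this statement, the plan is to follow the pluripotential parabolic scheme of Guedj--Lu--Zeriahi \cite{guedj2020pluripotential}, adapted to non-closed forms $\theta_t=\pi^*\omega_t$. Solutions are built as envelopes of pluripotential subsolutions. I would define
\[\mathcal S:=\{u:(0,T)\times X\to\mathbb R \text{ bounded},\ u_t\in\PSH(X,\theta_t),\ t\mapsto u_t \text{ locally uniformly Lipschitz},\ dt\wedge(\theta_t+\dc u_t)^n\geq e^{\partial_tu_t+F(t,x,u_t)}f\,dV_X\wedge dt,\ \limsup_{t\to0}u_t\leq\f_0\}\]
and set $\f:=\sup\{u:u\in\mathcal S\}$, taking the upper semicontinuous regularization.

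\textbf{Nonemptiness and barriers.} Using the lower bound in \eqref{assum: omega}, $e^{-At}\theta_0\leq\theta_t$, so a function of the form $u_t=e^{-At}\f_0+n(t\log t-t)-Ct$ should be a subsolution. The point is that $(\theta_t+\dc u_t)^n\geq e^{-nAt}\theta_{0,\f_0}^n$ is small only where necessary, and the term $n\log t$ in $\partial_t u$ gives room. To make sure $e^{\partial_tu_t}f$ is dominated, I would add $\varepsilon$ times a bounded subsolution of the elliptic problem $(\theta+\dc v)^n\geq fdV_X$, which exists by Lemma~\ref{lem: subsolution} or by the results of \cite{guedj2021quasi}. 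An upper barrier comes from comparison with the elliptic solution of \eqref{eq: cmae-X} plus $Ct$. Together these give boundedness on $[0,T')\times X$ and the convergence $\f_t\to\f_0$ in $L^1$.

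\textbf{Regularity in $t$.} By maximality of the envelope, translations in time such as $u^s_t=\frac{t+s}{t}\f_{t}$-type rescalings (or $\f_{t+s}$ corrected by $Cs$ and $e^{\pm As}$ factors) are again subsolutions, using \eqref{assum: omega} and the Lipschitz and convexity assumptions on $F$. This yields local Lipschitz continuity, semi-concavity (using $\partial^2_{tt}\theta_t\leq A\theta_t$ together with the convexity of $F$), and the bounds $n\log t-C\leq\partial_t\f\leq C/t$. These estimates are essentially formal once the envelope is defined.

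\textbf{Solution property and uniqueness.} To show that $\f$ solves \eqref{cmaf}, for a.e.\ fixed $t$ the semi-concavity makes $\partial_t\f_t$ well defined. I would then solve an elliptic problem $(\theta_t+\dc w)^n=e^{\partial_t\f_t+F}fdV_X$ locally (balayage in small Stein balls of $X$, as in Proposition~\ref{prop: stability}) and use maximality to get equality; the continuity from Theorem~\ref{thm: global-cont} helps control boundary behaviour. Uniqueness follows from a parabolic comparison principle, proved via the domination principle (Proposition~\ref{prop: domination}) applied to time slices with a perturbation $-\varepsilon t$.

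\textbf{Main obstacle.} The key difficulty is the non-closedness of $\theta_t$: the Monge--Amp\`ere mass is not constant, so the GLZ mass-based comparison arguments must be replaced by the domination principle and by $L^\infty$ control of masses. The degeneracy of $\theta_t$ on the exceptional locus must likewise be handled through the bigness of $\pi^*\omega_t$.
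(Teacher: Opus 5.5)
\textbf{The step that fails is the proof that the envelope solves \eqref{cmaf}.} The paper gives no argument of its own; it quotes the result from \cite{dang24-chern}. Following \cite{guedj2020pluripotential}, that work obtains the solution as a limit of smooth flows. It approximates the data (non-degenerate forms, smooth positive densities, smooth decreasing initial data), proves the boundedness, the semi-concavity and $n\log t-C\le\partial_t\varphi\le C/t$ uniformly for the approximants, and passes to the limit. Uniqueness comes from a comparison principle. Your Perron route is different, and its decisive step does not go through.

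Suppose, for each fixed $t$, you replace $\varphi_t$ on a ball $B$ by the solution $w_t$ of
$$(\theta_t+dd^cw_t)^n=e^{\partial_t\varphi_t+F(t,x,\varphi_t)}f\,dV_X,\qquad w_t=\varphi_t \text{ on } \partial B.$$
The glued function is not known to be a subsolution, because its own right-hand side is $e^{\partial_tw_t+F(t,x,w_t)}f$.
\begin{itemize}
\item Nothing controls $\partial_tw_t$. You do not even know that $t\mapsto w_t$ is Lipschitz, let alone that $\partial_tw_t\le\partial_t\varphi_t$.
\item Since $w_t\ge\varphi_t$, monotonicity of $F$ in $r$ pushes $F(t,x,w_t)$ in the wrong direction.
\end{itemize}
So maximality of the envelope gives no contradiction. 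A correct local replacement would be a parabolic Cauchy--Dirichlet problem on a cylinder $J\times B$ with lateral data $\varphi$, which is as hard as the existence you are trying to prove.

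What is missing is a genuine existence mechanism. In the approximation scheme, semi-concavity gives a.e.\ convergence of $\partial_t\varphi^j$. For fixed $t>0$, the bound $\partial_t\varphi_t\le C/t$ keeps the slice densities $e^{\partial_t\varphi_t+F}f$ uniformly in $L^p$, so elliptic stability can be applied slice by slice. The envelope characterization then follows a posteriori from the comparison principle.

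Two further steps need repair.
\begin{itemize}
\item \textbf{The upper barrier.} $\phi+Ct$, with $\phi$ the elliptic solution, is not a supersolution. First, $\phi$ need not be $\theta_t$-psh when $\theta_t\not\geq\theta_0$. Even when it is, say $\theta_t=(1+t)\theta_0$, one has $(\theta_t+dd^c\phi)^n\ge t^n\theta_0^n$, which is not dominated by $e^{C}f\,dV_X$ unless $f$ is bounded below. The upper bound on subsolutions should instead come from a uniform bound on the masses $\int_X(\theta_t+dd^cu_t)^n$ combined with Jensen's inequality.
\item \textbf{Time translations.} $\varphi_{t+s}$ is not an admissible competitor, since its initial trace is $\varphi_s$, which need not be $\le\varphi_0$ for merely bounded $\varphi_0$. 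Only time rescalings $t\mapsto\lambda t$ (and combinations of them) preserve the initial datum. This is why the resulting derivative bounds degenerate like $C/t$.
\end{itemize}
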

It follows from ~\cite[Proposition 3.6]{dang24-chern} that $\varphi$ is continuous on $(0,T)\times \Omega$ for some open Zariski set $\Omega\subset X$. The following result improves our later one.
\begin{theorem} Let $\f_0$ be a bounded $\omega_0$-psh function.
    The potential $\varphi:[0,T)\times X\to \mathbb R$ of the unique solution to the flow \eqref{cmaf} on $X$ starting from $\omega_0+\dc\varphi_0$ is continuous on $(0,T)\times X$. In particular, if $\varphi$ is continuous on $X$, then $\varphi$ is continuous on $[0,T)\times X$. 
   
\end{theorem}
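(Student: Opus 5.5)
The plan is to reduce the parabolic statement to the elliptic continuity result, Theorem~\ref{thm: global-cont}, and its proof scheme, treating each time slice separately. We then recover joint continuity in $(t,x)$ from the time regularity given by the preceding theorem.

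\emph{Step 1: slices as elliptic equations.} Fix $t_0\in(0,T)$ and $0<T'<T$ with $t_0<T'$. By the preceding theorem, $\partial_t\varphi$ is bounded on $[t_0/2,T']\times X$, since $n\log t-C\le\partial_t\varphi\le C/t$. Moreover $t\mapsto\varphi_t$ is uniformly semi-concave, so the one-sided time derivatives exist everywhere. Hence each slice solves
\[(\theta_{t_0}+\dc\varphi_{t_0})^n=g_{t_0}\,dV_X,\qquad g_{t_0}:=e^{\partial_t\varphi_{t_0}+F(t_0,x,\varphi_{t_0})}f,\]
with $\|g_{t_0}\|_p\le C\|f\|_p$ uniformly for $t_0\in[t_0/2,T']$, because $F$ is continuous and $\varphi$ is bounded there. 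Here $\theta_{t_0}=\pi^*\omega_{t_0}$ is the pullback of a Hermitian form on $Y$.

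\emph{Step 2: continuity of each slice.} The proof of Theorem~\ref{thm: global-cont} used only three ingredients: that $\theta=\pi^*\omega_Y$, that $\varphi$ is bounded, and that the density lies in $L^p$. The first two enter through the local Stein neighbourhood and strictly psh $\rho$ of \cite[Lemma 3.1]{ChoChoi25-continuity}, Lemma~\ref{lem: approx}, and Proposition~\ref{prop: stability}. The specific form $f=\pi^*\mu_{Y,h}/\omega_X^n$ played no role beyond $f\in L^p$. So I would rerun that argument verbatim with $\omega_Y$ replaced by $\omega_{t_0}$ and $f$ replaced by $g_{t_0}$. This shows $\varphi_{t_0}\in C^0(X)$ for every $t_0\in(0,T)$.

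\emph{Step 3: joint continuity.} Local Lipschitz control in $t$ follows from the bound on $\partial_t\varphi$ on $[t_0/2,T']\times X$:
\[|\varphi(t,x)-\varphi(s,x)|\le C_{t_0}|t-s|\]
uniformly in $x$. Combined with continuity of $x\mapsto\varphi_{t}(x)$ at each fixed $t$, the triangle inequality
\[|\varphi(t,x)-\varphi(t_0,x_0)|\le C|t-t_0|+|\varphi_{t_0}(x)-\varphi_{t_0}(x_0)|\]
gives continuity on $(0,T)\times X$.

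\emph{Step 4: up to $t=0$.} For the last assertion, where $\varphi_0$ is continuous, continuity at $t=0$ requires $\varphi_t\to\varphi_0$ uniformly. I would prove this by comparison with barriers, as in \cite{guedj2020pluripotential,dang24-chern}. Approximate $\varphi_0$ uniformly by smooth $\theta_0$-psh functions $\varphi_0^\varepsilon$ (via Lemma~\ref{lem: approx} locally, or Demailly regularization with the loss corrected by a small multiple of a quasi-psh $\rho$ with $\theta+\dc\rho\ge\varepsilon_0\omega_X$). Then use $\varphi_0^\varepsilon\pm\varepsilon + Ct + n(t\log t - t)$-type sub- and supersolutions together with the parabolic comparison principle of \cite{dang24-chern} to squeeze $\varphi_t$. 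Because $\theta_0$ is only semipositive and big, the approximation must be done on the whole of $X$, not just on a Zariski open set. This is the step I expect to be the main obstacle. If a global uniform approximation is not available, I would instead apply the Step 2 Hartogs-type contradiction argument directly in space-time, using Proposition~\ref{prop: stability} on slices.
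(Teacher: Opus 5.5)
Your Steps 1--3 are the paper's proof. On $J\Subset(0,T)$, the bounds on $\partial_t\varphi$ and $F$ give each slice an $L^p$ density. Theorem~\ref{thm: global-cont}, whose proof indeed uses only $\theta=\pi^*\omega_Y$, boundedness, and $L^p$ integrability of the density, makes each $\varphi_t$ continuous. The uniform Lipschitz bound in $t$ plus the triangle inequality then gives joint continuity. One small point: the space--time equation yields the slice equation only for a.e.\ $t$, so ``each slice solves'' needs a word of justification. The paper avoids this by using only $(\theta_t+\dc\varphi_t)^n\le e^{M}f\,dV_X$, first for a.e.\ $t\in J$ and then for every $t\in J$ by weak continuity of $t\mapsto(\theta_t+\dc\varphi_t)^n$; that upper bound is all the elliptic argument needs.

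The paper's proof stops at $(0,T)\times X$ and gives no argument for the clause at $t=0$, so Step~4 is your own addition. As sketched, it does not work, for two reasons.

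\emph{Approximation.} Correcting a regularization of $\varphi_0$ by $\varepsilon\rho$ with $\theta_0+\dc\rho\ge\varepsilon_0\omega_X$ cannot produce uniform (or even bounded) approximants. Indeed, $\theta_0=\pi^*\omega_0$ vanishes on the positive-dimensional fibres of $\pi$. Integrating $\dc(\rho|_C)\ge\varepsilon_0\omega_X|_C>0$ over a compact curve $C$ in such a fibre forces $\rho|_C\equiv-\infty$.

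\emph{Barriers.} More seriously, no barrier of the form $\varphi_0^\varepsilon+h(t)$ works in either direction. The supersolution inequality $(\theta_t+\dc\varphi_0^\varepsilon)^n\le e^{h'(t)+F}f\,dV_X$ cannot be arranged, since $f\in L^p$ has no positive lower bound. The subsolution inequality fails near the exceptional locus: there the smooth density of $(\theta_t+\dc\varphi_0^\varepsilon)^n$ tends to $0$ (it vanishes on the fibres, where $\varphi_0^\varepsilon$ is constant), while $f$ need not.

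\emph{A route that needs no smoothing.} The upper half, $\limsup_{(t,x)\to(0,x_0)}\varphi_t(x)\le\varphi_0(x_0)$, follows from Hartogs' lemma, because the $\varphi_t$ are uniformly $C\omega_X$-psh and $\varphi_t\to\varphi_0$ in $L^1$. For the lower half, take a bounded $\psi\in\PSH(X,\theta_0)$ with $(\theta_0+\dc\psi)^n=c'f\,dV_X$ \cite{guedj2021quasi}. Using $\theta_t\ge e^{-At}\theta_0$, one checks that
\[u_t=e^{-At}\bigl[(1-t)\varphi_0+t\psi\bigr]+n(t\log t-t)-Ct\]
is a pluripotential subsolution on $(0,\min(1,T'))$ for $C$ large. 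The $t^n$ factor coming from $n\log t$ is matched by $(t\,\theta_{0,\psi})^n$. Since $u_t\to\varphi_0$ uniformly, the comparison principle (as in \cite{guedj2020pluripotential,dang24-chern}) should give $\varphi_t\ge u_t$. Continuity of $\varphi_0$ then concludes.
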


\begin{proof}
     Fix $J\Subset (0,T)$. 
    Since $\varphi$ is locally uniformly Lipschitz in $t$ and $F$ is bounded from above, there exists a constant $M=M(J)>0$ for almost every $t\in J$. We thus obtain
    \begin{equation*}
       (\omega_t+\dc\varphi_t)^n\leq e^M fdV_X 
    \end{equation*} for almost every $t\in J$. This inequality holds for all $t\in J$ due the continuity of the left-hand side.  By Theorem~\ref{thm: global-cont}, $\varphi_t$ is continuous on $X$ for each $t\in J$. 
    Let $\kappa$ 
    be the uniform Lipschitz constant of $\varphi$ on $J$. Then for any $s,t\in J$ and
$x,y\in X$ we have
\begin{align*}
    |\varphi(s,x)-\varphi(t,y)|&\leq |\varphi(s,x)-\varphi(t,x)|+|\varphi(t,x)-\varphi(t,y)|\\
    &\leq \kappa|s-t|+ |\varphi(t,x)-\varphi(t,y)|.
\end{align*}
It follows that $\varphi$ is continuous on $J\times X$. Thus, $\varphi$ is continuous on $(0,T)\times X$.
\end{proof}

\section{Geometric applications}\label{sect: application}
\subsection{Modulus of continuity and diameter bound}

 
Let $\mu=fdV_X$ be a positive Radon measure with $f\in L^p(dV_X)$ for some $p>1$. It follows from ~\cite{dinew2012pluripotential,kolodziej2015weak} that there exists a continuous $\omega_X$-psh function $\varphi$ and $c>0$ such that $$(\omega_X+\dc\varphi)^n=cfdV_X,\quad\sup_X\varphi=0$$ with a uniform a priori bound on $c$ and $\|\varphi\|_\infty$.

\begin{theorem}\label{thm: stability-L1} Let $\mu$ be a positive Radon measure with $f\in L^p(dV_X)$ for some $p>1$.
    Let $\psi$ be a bounded $\omega$-psh function. Assume $\varphi\in \PSH(X,\omega_X)\cap {L}^\infty(X)$ and $c>0$ solve the complex Monge-Amp\`ere equation
    \[(\omega_X+\dc\varphi)^n=cfdV_X,\quad\sup_X\varphi=0. \] Fix $0<\alpha<\frac{p-1}{np+p-1}.$
    Then
    \begin{equation}\label{eq: stable}
        \sup_X(\psi-\varphi)\leq C (\|(\psi-\varphi)_+\|_{1})^\alpha
    \end{equation}
  where $C>0$ depends on $n$, $\omega_X$,  $\alpha$, $p$ and $\|f\|_{p}$.
    
     Moreover, the modulus of continuity of $\varphi$ satisfies $m_\varphi(r)=O(r^{2\alpha})$; in particular $\varphi$ is H\"older continuous with exponent $\alpha$ on $X$.
\end{theorem}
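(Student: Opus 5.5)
The proof follows the scheme of Theorem~\ref{thm :stability-big}, now with $\theta=\omega_X$ Hermitian. In this setting $\Omega=X$, $V_\theta=0$, and $\psi_0$ may be taken to be a constant with $\varepsilon_0=1$. The Hölder estimate is then obtained by the regularization argument of Section~\ref{sect: local}, which now works globally.

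\emph{Step 1: reduction to $\psi\ge\varphi$.} Replacing $\psi$ by $\max(\psi,\varphi)$ changes neither side of \eqref{eq: stable}, so we may assume $\psi\geq\varphi$. We may also assume $\psi\le\varphi+M$ for some $M$, since both functions are bounded. The estimate should be uniform, so $M$ must not enter the constants except through a first crude bound. One gets this bound from the domination principle, or by replacing $\psi$ by $\psi-\sup_X\psi$ and using $\|\varphi\|_\infty\le C$ from \cite{kolodziej2015weak}.

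\emph{Step 2: the level-set estimate.} Fix $q>1$. As in \eqref{est: stability}, we prove
\[
\sup_X(\psi-\varphi)\leq \delta+C\mu(\varphi<\psi-\delta)^{\frac{1}{nq}} .
\]
Put $F=\mathbf 1_{\{\varphi<\psi-\delta\}}f/\mu(\varphi<\psi-\delta)^{1/q}$, which lies in $L^r$ with a controlled norm. By \cite{kolodziej2015weak} or Lemma~\ref{lem: subsolution} with $\theta=\omega_X$, there is $v\in\PSH(X,\omega_X)$ with $-1\le v\le 0$ and $\omega_v^n\geq c^nF\,dV_X$. Set $u=(1-\lambda)\psi+\lambda v$. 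On $\{\varphi<u-\delta-C'\lambda\}$ we have $\omega_u^n\ge 2\omega_\varphi^n$ once $\lambda\simeq \mu(\varphi<\psi-\delta)^{1/(nq)}$. Here $C'$ bounds $\psi-v$; it uses $\|\psi\|_\infty$, which we normalize away via $\sup\psi$ and $\|\varphi\|_\infty$. Proposition~\ref{prop: domination} then gives $\varphi\ge u-\delta-C'\lambda$.

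\emph{Step 3: optimizing in $\delta$.} Combine the level-set estimate with the Chebyshev--Hölder bound \eqref{eq: Chebyshev} and choose $\delta=\|(\psi-\varphi)_+\|_1^{\alpha}$. Letting $q\downarrow1$, as in the Remark following Theorem~\ref{thm :stability-big}, gives \eqref{eq: stable} for every $\alpha<\frac{p-1}{np+p-1}$.

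\emph{Step 4: the modulus of continuity.} This is the main step. Form the Kiselman--Legendre transform $\Phi_{c,\delta}$ of \eqref{eq: Kiselman}. By \eqref{eq: regularization}, the function $\varphi_\delta:=(1+Ac+2K\delta)^{-1}\Phi_{c,\delta}$ (after rescaling) is $\omega_X$-psh. This is where the constant $\psi_0$ and $\varepsilon_0=1$ replace the big-case correction; $\omega_X$ itself is positive, so no analytic singularities appear. Choose $c$ with $Ac+2K\delta=\delta^{2\alpha}$. Apply \eqref{eq: stable} with $\psi=\varphi_\delta$ and use \eqref{eq: integral}, which gives $\|\rho_\delta\varphi-\varphi\|_1\le C\delta^2$. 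This yields
\[
\sup_X(\varphi_\delta-\varphi)\le C\delta^{2\alpha}.
\]
Next, arguing as in Section~\ref{sect: local}, bound below the point $t_0(z)$ where the infimum is attained: $t_0\geq\kappa\delta$, where $\kappa>0$ is now a uniform constant because $\psi_0$ is bounded. Monotonicity of $t\mapsto\rho_t\varphi+Kt^2$ then gives $\rho_{\kappa\delta}\varphi-\varphi\le C\delta^{2\alpha}$ uniformly on $X$. Finally, \cite[Lemma 4.2]{guedj2008holder} (or \cite{Zeriahi20-continuity}) converts the bound $\rho_\delta\varphi-\varphi=O(\delta^{2\alpha})$ into $m_\varphi(r)=O(r^{2\alpha})$. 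The delicate point is checking that every constant, in particular $\kappa$, stays uniform on all of $X$. That uniformity is exactly what fails in the big case.
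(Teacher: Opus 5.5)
Your proposal follows the paper's proof. Steps~1--3 are the proof of Theorem~\ref{thm :stability-big} specialized to $\theta=\omega_X$, $V_\theta=0$. Step~4 is the same Kiselman--Legendre argument, with $\kappa$ uniform because no singular weight $\psi_0$ is present.

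The one point to correct is the normalization of $\psi$ in Steps~1--2.
\begin{itemize}
\item The lower bound $\psi\geq\varphi$ is enough for the inclusion $\{\varphi<u-\delta-C'\lambda\}\subset\{\varphi<\psi-\delta\}$.
\item However, turning $\varphi\geq u-\delta-C'\lambda$ into a bound on $\psi-\varphi$ produces the term $\lambda(\psi-v)$, and this requires an upper bound on $\psi$.
\item That bound cannot be ``normalized away'', because \eqref{eq: stable} is not invariant under $\psi\mapsto\psi-\sup_X\psi$.
\item With $C$ independent of $\sup_X\psi$, the estimate is in fact false. For $\psi=\varphi+M$ it reads $M\leq C\,(M\Vol(X))^{\alpha}$, which fails as $M\to\infty$. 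So no uniform crude bound from the domination principle exists either.
\end{itemize}
You must assume $\sup_X\psi\leq 0$, or let $C$ depend on $\sup_X\psi$. The paper does this tacitly when it writes $\psi-\varphi\leq\delta+(C_0+1)\lambda$, which amounts to $\psi-v\leq 1$.

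This restriction is harmless for Step~4. Since $\varphi\leq 0$, you have $\rho_\delta\varphi\leq 0$, hence $\Phi_{c,\delta}\leq 0$ and $\varphi_\delta\leq 0$.

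There is one more detail in Step~4. Because $\Phi_{c,\delta}\leq 0$, you have $\varphi_\delta\geq\Phi_{c,\delta}$, so $\varphi_\delta\leq\rho_\delta\varphi$ does not hold directly. Apply \eqref{eq: stable} to the $\omega_X$-psh function $\varphi_\delta-\delta^{2\alpha}\|\Phi_{c,\delta}\|_\infty$, which satisfies $\varphi_\delta-\delta^{2\alpha}\|\Phi_{c,\delta}\|_\infty\leq\Phi_{c,\delta}\leq\rho_\delta\varphi$. Do not apply it to $\varphi_\delta$ itself. This keeps the $L^1$ error of order $\delta^2$ rather than $\delta^{2\alpha}$, which would otherwise degrade the exponent to $2\alpha^2$.
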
 

The stability estimates were proved by Ko\l odziej--Nguyen~\cite{kolodziej2018holder}, where they developed the Monge--Amp\`ere capacity in the Hermitian setting.
Ko\l odziej--Nguyen \cite{kolodziej2019stability} demonstrated the H\"older continuous of the solution to the complex Monge--Amp\`ere equation with strictly positive right hand side $f\geq c_0>0$. Lu-Phung-T\^o~\cite{lu2021stability} later removed this  technical assumption.

 \begin{proof}

The stability estimate~\eqref{eq: stable} is a consequence of Theorem~\ref{thm :stability-big} where $\theta=\omega_X$ and $V_{\omega_X}=0$.
 
We now prove the modulus of continuity. Recall that $\Phi_{c,\delta}$ is Kiselman-Legendre transform of $\varphi$. 
We consider 
\[\psi_{\delta}=(1-[Ac+K\delta])\Phi_{c(\delta),\delta} \]
which is $\omega_X$-psh. We observe that
\[\psi_\delta\leq \Phi_{c,\delta}\leq \rho_\delta\varphi. \]
From \cite[Lemma 2.2]{demailly2014holder}, $\|\rho_\delta\varphi-\varphi\|_1=O(\delta^2)$, hence $$\|(\psi_\delta-\varphi)_+\|_1\leq \|\rho_\delta\varphi-\varphi\|_1=O(\delta^2).$$
We apply the stability theorem (Theorem~\ref{thm: stability-L1}) with $\psi=\psi_\delta$ to obtain
$$\psi_\delta\leq\varphi +C\delta^{2\alpha}$$
with $C>0$ under control.  At $z\in X$, we can find $t=t(z)$ achieving the infimum of $\Phi_{c,\delta}$. 
Since $\rho_t\varphi+Kt^2\geq \varphi$ we have
\begin{align*}
    \varphi(z)+C\delta^{2\alpha}&\geq \rho_t\varphi(z)+Kt-K\delta-c\log\frac{t}{\delta}\\
    &\geq \varphi(z)-K\delta-c(\delta)\log\frac{t}{\delta}.
\end{align*}
If we choose $c(\delta)=K\delta+C\delta^{2\alpha}=O(\delta^{2\alpha})$, then $\log\frac{t}{\delta}\geq -C$. Hence $t\geq \kappa \delta$  for some $\kappa\in (0,1)$. This yields at any $z\in X$ and $\delta\in[0,\delta_0)$ we have
$$\rho_{\kappa\delta}\varphi+K\kappa\delta-K\delta-\varphi\leq \rho_{t}\varphi+Kt-K\delta-\varphi=\Phi_{c,\delta}-\varphi.$$ We apply the stability estimate again to have that
$$\rho_{\kappa\delta}\varphi-\varphi\leq K\kappa\delta+C\delta^{2\alpha}\leq C'\delta^{2\alpha}.$$ Replacing $\delta$ by $\kappa^{-1}\delta$ we have
$$\rho_\delta\varphi-\varphi\leq C''\delta^{2\alpha}.$$
This yields the H\"older continuity of $\varphi$, cf. \cite[Theorem 3.4]{Zeriahi20-continuity} or~\cite[Lemma 4.4]{lu2021stability}.\end{proof}
\begin{proposition}\label{prop: diameter}
    Let $(X,\omega_X)$ be a Hermitian manifold of complex dimension $n$. Assume $\varphi\in\PSH(X,\omega_X)$ is continuous in an open set $U\subset X$, with modulus of continuity $m_\varphi$ which satisfies the Dini condition $m_1(r):=\int_0^r\frac{m_\varphi(t)}{t}dt<+\infty$. If $\omega=\omega_X+\dc\varphi$ is a Hermitian form in $U$, then for each
compact set $K\subset U$ there exists $C_K>0$ such that for all $p,q\in K$,
$$d_\omega(p, q) \leq  C_K m_1\circ d_{\omega_X} (p, q).$$
\end{proposition}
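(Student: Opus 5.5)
Since the statement is local in $U$ and $K\subset U$ is compact, I plan to follow Li's argument (as in~\cite{Li_yang_2021}). I would cover $K$ by finitely many coordinate balls $B(x_i,2r_0)\Subset U$ in which $\omega_X$ is uniformly equivalent to the Euclidean metric $\beta=\dc|z|^2$. I would then write $\omega_X=\dc g_i$ with a smooth strictly psh local potential $g_i$ on each ball, which is possible after shrinking since $\omega_X>0$ locally. On $B(x_i,2r_0)$ we then have $\omega=\dc u$ with $u:=g_i+\varphi$ psh, and $u$ is continuous with modulus $m_u\le C(r+m_\varphi(r))\le C'm_\varphi(r)$. Here we may assume $m_\varphi(r)\ge r$, because the Hölder case is the relevant one and a modulus of continuity of a nonconstant function is at least linear up to constants. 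It then suffices to bound $d_\omega(p,q)$ for $p,q$ in a single ball $B(x_i,r_0)$ with $|p-q|$ small. For distant points a chaining argument through the finite cover works, using that $K$ can be enclosed in a connected compact subset of $U$ via paths (or applying the estimate within each component), which gives a uniform bound $C$ for $d_\omega$ on pairs at Euclidean distance $\ge r_0$. Since $m_1(r)\ge m_1(r_0)>0$ there, that case is absorbed into $C_K$.

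The key local step is to estimate the $\omega$-length of the segment (or of a nearby curve) joining $p$ and $q$. Following Li's trick, I would use the mean-value/Laplacian estimate over small balls. For $z$ in the ball and $r>0$ small,
\[
\frac{1}{r^{2n}}\int_{B(z,r)}\omega\wedge\beta^{n-1}=\frac{c_n}{r^{2n}}\int_{B(z,r)}\Delta u\,dV\le C\frac{\fint_{\partial B(z,2r)}u-u(z)}{r^2}\le C\frac{m_u(2r)}{r^2},
\]
by the standard comparison of $\int_{B(z,r)}\Delta u$ with the spherical mean increment (Green's formula / Riesz representation for subharmonic functions). Since $\omega>0$, $|v|^2_\omega\le \mathrm{tr}_\beta\omega\,|v|^2$, so the average of the trace on $B(z,r)$ is at most $Cm_u(2r)/r^2$.

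Then I would run a dyadic construction. Set $r_k=2^{-k}|p-q|$ and average over families of parallel segments: consider segments from $p+w$ to $q+w$ with $w\in B(0,r)$. By Fubini and Cauchy--Schwarz, the average $\omega$-length over such segments is at most $|p-q|\bigl(\fint \mathrm{tr}_\beta\omega\bigr)^{1/2}$ over a tube of width $r$, which is only useful when $r\approx|p-q|$. That yields a curve from a point $p'\in B(p,r_0')$ to $q'$ of length $\lesssim |p-q|\cdot m_u(|p-q|)^{1/2}/|p-q|=m_u(|p-q|)^{1/2}$. This is the crude bound. To get $m_1$, I would instead connect $p$ to a nearby point through a chain of annular scales: from scale $r_k$ to $r_{k+1}$ pick a point via averaging whose connecting segment has length $\lesssim r_k\,(m_u(r_k)/r_k^2)^{1/2}$. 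Summing gives $\sum_k m_u(r_k)^{1/2}$, which is not quite $m_1$. Li's actual argument uses that $\mathrm{tr}_\beta\omega$ is averaged linearly (integrating the trace along the curve rather than its square root), with the weight of a radial-ray family from $p$: $\int$ over rays from $p$ of $\int_0^{r}\mathrm{tr}\,\omega\,ds$ is controlled by $\int_0^r\frac{m_u(s)}{s}ds$ (since the radial kernel $|z-p|^{1-2n}$ integrated against $\Delta u$ produces exactly the Dini integral). Combined with $|\dot\gamma|_\omega\le 1+\mathrm{tr}_\beta\omega$ via AM-GM after rescaling, this gives curves of length $\le C m_1(|p-q|)$ from $p$ and $q$ to a common small sphere on which the points can be joined.

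The main obstacle is this last step: obtaining a genuine pointwise bound from $p$ (not just averaged endpoints), which requires the radial-kernel estimate $\int_{B(p,r)}\frac{\Delta u(z)}{|z-p|^{2n-1}}dV\le C\int_0^{2r}\frac{m_u(s)}{s}ds$. I would prove it by layer-cake in $s=|z-p|$, using $\int_{B(p,s)}\Delta u\le Cs^{2n-2}m_u(2s)$. Then I would use the AM-GM rescaling $|v|_\omega\le \frac{1}{2}(\lambda^{-1}+\lambda\,\mathrm{tr}_\beta\omega)|v|$ with $\lambda$ chosen scale-dependently, so that the linear trace bound rather than its square root controls the length.
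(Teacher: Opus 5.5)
Your proposal has a genuine gap, located exactly at the step you flag as the main obstacle.

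\textbf{The radial-kernel bound is false.} The estimate $\int_{B(p,r)}\Delta u\,|z-p|^{1-2n}\,dV\le C\int_0^{2r}m_u(s)s^{-1}\,ds$ does not hold. Write $\mu(s):=\int_{B(p,s)}\Delta u\,dV\le Cs^{2n-2}m_u(2s)$. The layer-cake formula then gives $r^{1-2n}\mu(r)+(2n-1)\int_0^r s^{-2n}\mu(s)\,ds\lesssim r^{-1}m_u(2r)+\int_0^r m_u(2s)s^{-2}\,ds$. This is one power of $s$ worse than you need, since the left side scales like $u$ divided by a length. For $u=|z|^{2\alpha}$ in $\mathbb{C}$ with $0<2\alpha\le 1$ and $p=0$, the left side is infinite while the right side is finite.

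\textbf{The AM--GM rescaling cannot compensate.} On the shell $\{r_{k+1}\le|z-p|\le r_k\}$, the direction-averaged cost of $\frac12(\lambda_k^{-1}+\lambda_k\tr_\beta\om)$ is $\lesssim\lambda_k^{-1}r_k+\lambda_k r_k^{-1}m_u(2r_k)$. Its minimum over $\lambda_k$ is $\asymp\sqrt{m_u(2r_k)}$, which is just your Cauchy--Schwarz bound again.

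\textbf{No argument can give a bound linear in $m_\varphi$.} This holds as long as $C_K$ depends on $\varphi$ only through $m_\varphi$, which is the version your plan targets and the one Theorem C needs. Already for $n=1$, let $\varphi_\varepsilon=\varepsilon_0\chi(z)(|z|^2+\varepsilon^2)^\alpha$ in a disc chart. Take $2\alpha\le1$, $\chi$ a cutoff equal to $1$ on $\{|z|\le r_0\}$, and $\varepsilon_0$ small. Then $m_{\varphi_\varepsilon}(r)\le Cr^{2\alpha}$ uniformly in $\varepsilon$, so $\int_0^r m_{\varphi_\varepsilon}(t)t^{-1}\,dt\lesssim r^{2\alpha}$. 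On the other hand, $\om_\varepsilon\ge c|z|^{2\alpha-2}\beta$ on $\{\varepsilon\le|z|\le r_0\}$, which forces $d_{\om_\varepsilon}(0,q)\ge c'|q|^\alpha$ whenever $2\varepsilon\le|q|\le r_0$.

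What your scheme actually yields is $d_\om(p,q)\le C\int_0^{d_{\om_X}(p,q)}\sqrt{m_\varphi(t)}\,t^{-1}\,dt$, and that is also exactly what the paper's proof delivers. The paper works with the distance function $d_p=d_\om(p,\cdot)$:
from $|\nabla d_p|_\om\le1$ one gets $|\nabla d_p|^2_{\om_X}\le\tr_{\om_X}\om$;
a cutoff plus integration by parts (torsion terms controlled via $d\om_X$, $\dc\om_X$) gives $\int_{B(x,r)}|\nabla d_p|^2_{\om_X}\om_X^n\le Cm_\varphi(r)r^{2n-2}$;
Poincar\'e--Wirtinger, with its factor $r^2$, then bounds the $L^2$ mean oscillation of $d_p$ by $C\sqrt{m_\varphi(r)}$;
Kovats' Dini--Campanato estimate applied ``with $\sqrt{m_\varphi}$'' produces $\int_0^r\sqrt{m_\varphi(t)}\,t^{-1}\,dt$.
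So $m_1$ in the statement has to be read with $\sqrt{m_\varphi}$. For $m_\varphi=O(r^{2\alpha})$ this still gives a H\"older bound, which is all Theorem C uses.

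Against that corrected target, your route is a valid, more hands-on alternative. Cauchy--Schwarz on dyadic shells bounds the averaged $\om$-length of segments. The pointwise issue at $p$ is settled by averaging over $w\in B(p,r)$ with $r=|p-q|$. The averages of the $\om$-lengths of $[p,w]$ and $[q,w]$ are both $\lesssim\int_{B(x,2r)}\sqrt{\tr_\beta\om}\,|z-x|^{1-2n}\,dV$ for $x=p,q$. So by Chebyshev some $w$ is good for both. The Campanato route avoids constructing curves altogether, because it turns the integral gradient bound for $d_p$ directly into a pointwise modulus.

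Finally, $\om_X$ need not be closed, so $\om_X=\dc g_i$ is impossible in general. All you need is $\om\le\dc u$ with $u=\varphi+A|z|^2$, which is psh and satisfies this once $A\beta\ge\om_X$ on the chart.
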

\begin{proof}

The idea goes back to Y. Li \cite{Li_yang_2021}, but requires slight modifications in the Hermitian context. 
We give a proof here for the reader's convenience. 
Let $d_p(x) := {d}_\omega(p,x)$ denote the distance function with respect to the Hermitian form $\omega$. 
Set $r_p=\inf_{x\in\Omega\setminus K} d_p(x)$ and $B := B_{\om_X}(p, r_p/2)$.
The function $d_p(x)$ defines a distance function on $B$; hence it is $1$-Lipschitz with respect to $d_\omega$.
This implies that $\nabla d_p$ is well defined almost everywhere with $|\nabla d_p|_{\om} \leq  1$.
For any $r \leq r_p/2$, we choose a smooth function $\chi$ such that $\chi \equiv  1$ on $B_{\om_X}(p,r/2)$ and ${\rm supp}(\chi) \subset B_{\om_X}(p, 3r/2)$.
Moreover, $\chi$ can be chosen to satisfy 
\begin{equation}\label{eq:good_cutoff}
    |\nabla^2 \chi|_{\om_X} \leq C r^{-2}, 
    \quad\text{and}\quad
    |\nabla \chi|_{\om_X}^2 \leq C r^{-2}.
\end{equation}
where $C>0$ is a uniform constant.
Then we have
\begin{align*}
    \int_{B_{\om_X}(p,r)} |\nabla d_p|_{\om_X}^2 \om_X^n
    &\leq \int_{B_{\om_X}(p,r)} |\nabla d_p|_{\om}^2 (\tr_{\om_X} \om) \om_X^n\\
    &\leq \int_{B_{\om_X}(p, 3r/2)} \chi |\nabla d_p|_{\om}^2 (\tr_{\om_X} \om) \om_X^n\\
    &\leq  \int_{B_{\om_X}(p, 3r/2)} \chi (\tr_{\om_X} \om) \om_X^n\\
    &= \int_{B_{\om_X}(p, 3r/2)} n \chi \om_X^n + \underbrace{\int_{B_{\om_X}(p, 3r/2)} \chi (\tr_{\om_X} dd^c \varphi) \om_X^n}_{=: I}.
\end{align*}
Note that there is a constant $C > 0$ such that 
\begin{equation}\label{eq:cst_B}
    -C \om_X^2 \leq dd^c \om_X \leq C \om_X^2
    \quad\text{and}\quad
    -C \om_X^3 \leq d \om_X \wedge d^c \om_X \leq C \om_X^3.
\end{equation}
By the Cauchy--Schwarz inequality, there exists a uniform constant $C>0$ such that
\begin{equation}\label{eq:cauchy_schwarz}
    \left|\frac{d f \wedge d^c\om_X^{n-1}}{\om_X^n}\right|
    \leq C\left(\frac{d f \wedge d^c f \wedge \om_X^{n-1}}{\om_X^n} + 1\right)
\end{equation}
for any smooth function $f$.
By  (\ref{eq:good_cutoff}), (\ref{eq:cst_B}) and (\ref{eq:cauchy_schwarz}), one can derive
\begin{align*}
    I &= n \int_{B_{\om_X}(p, 3r/2)} \chi dd^c \varphi \wedge \om_X^{n-1}\\
    &= n \int_{B_{\om_X}(p, 3r/2)} (\varphi - \varphi(p)) dd^c (\chi \om_X^{n-1})\\
    &= n \int_{B_{\om_X}(p, 3r/2)} (\varphi - \varphi(p)) (dd^c \chi \wedge \om_X^{n-1} + 2 d\chi \wedge d^c \om_X^{n-1} + \chi dd^c \om_X^{n-1})\\
    &\leq C \int_{B_{\om_X}(p, 3r/2)} \left|\varphi - \varphi(p)\right| \left( |\Delta_{\om_X} \chi| + |\nabla \chi|_{\om_X}^2 + 1\right) \om_X^n\\
    &\leq C m_\varphi(r)r^{2n-2}.
\end{align*}
This yields
\[
    \int_{B_{\om_X}(p,r)} |\nabla d_p|_{\om_X}^2 \om_X^n \leq C  m_\varphi(r)r^{2n -2}.
\]
By the Poincar\'e--Wirtinger inequality, we have
\begin{align*}
  \int_{B_{\om_X}(p,r)}\left|d_p-\underline{d_p} \right|^2\omega_X^n&\leq C_p\int_{B_{\om_X}(p,r)}|\nabla_{\omega_X} d_p|^2\omega_X^n\\
  &\leq C''m_\varphi(r)r^{2n-2},
\end{align*} where $\underline{d_p}:= \frac{1}{\Vol(B_{\om_X}(p,r))}\int_{B_{\om_X}(p,r)}d_p\omega_X^n$.
It follows that $d_p$ belongs to a generalized Morrey-Campanato space. Applying Campanato--Morey’s estimate with $\sqrt{m_\varphi}$ (see~\cite{Kovats99-elliptic}), we find that $d_p$ is continuous and $m_{d_p}(r)\leq C m_1(r)$. This completes the proof.    
\end{proof}

 \subsection{Estimates along Chern-Ricci flows}
In this section, we study a generalization of the Chern–Ricci flow, namely the {\em twisted Chern–Ricci flow}: a smooth family of Hermitian metrics $(\omega_t)_{t>0}$ satisfies 
 \begin{equation}\label{eq: crf}
     \frac{\partial\omega_t}{\partial t}=-\textrm{Ric}(\omega_t)+\eta ,\quad \omega_t\xrightarrow{t\to 0}T_0\;\text{weakly}
 \end{equation}
 where $\textrm{Ric}(\omega_t)$ is the Chern–Ricci form of $\omega_t$, $\eta$ is a smooth (1, 1) form, and
 $T_0=\omega_X+\dc\varphi_0$ is a positive (1,1) current. Solving the twisted Chern-Ricci flow is equivalent to solving the parabolic Monge-Amp\`ere equation 
 \begin{equation}\label{eq: cmaf}
     (\widehat\omega_t+\dc\varphi_t)^n=e^{\partial_t\varphi}\omega_X^n,\quad\varphi_t\xrightarrow{t\to 0}\varphi_0,\;\text{in}\,L^1(\omega_X^n),
 \end{equation} where $\widehat\omega_t:=\omega_X+t(\eta-\textrm{Ric}(\omega_X))$. 
 In this case, $\omega_t=\widehat\omega_t+\dc\varphi_t$.

 Generalizing the work of Tosatti--Weinkove~\cite{tosatti2015evolution}, T\^o~\cite{to2018regularizing} showed that if $\varphi_0$ is a bounded $\omega_X$-psh function  
 there exists a unique solution of the twisted Chern-Ricci flow ~\eqref{eq: crf}  on $[0,T_{\max})$, where
 \[ T_{\max}=\sup\{t>0:\exists\,\hat{\psi}_t\in\mathcal{C}^\infty(X),\widehat\omega_t+\dc\hat\psi_t>0 \}.\]
 More precisely, we have
 \begin{theorem}
     Let $\varphi_0$ be a bounded $\omega_X$-psh function. Then there exists a unique solution $\varphi\in\mathcal{C}^\infty((0,T_{\max})\times X )$ to~\eqref{eq: cmaf} such that $\|\varphi_t-\varphi_0\|_{L^\infty}\to 0$ as $t\to 0^+$.\end{theorem}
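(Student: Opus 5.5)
The plan follows T\^o's scheme~\cite{to2018regularizing}, which generalizes Tosatti--Weinkove~\cite{tosatti2015evolution}: approximate the initial datum, solve smooth flows, obtain estimates uniform in the approximation, pass to the limit, and prove uniqueness by a parabolic comparison principle.

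\textbf{Approximation.} By Demailly's regularization (or Lemma~\ref{lem: approx} applied chart by chart and glued), choose smooth strictly $\omega_X$-psh functions $\varphi_0^j$ decreasing to $\varphi_0$, with $\|\varphi_0^j\|_\infty$ bounded independently of $j$. For each $j$, the classical Chern--Ricci flow theory of Tosatti--Weinkove gives a smooth solution $\varphi^j$ of \eqref{eq: cmaf} on $[0,T_{\max})\times X$ with $\varphi^j_0=\varphi^j_0$. Fix $T<T_{\max}$ and a smooth $\hat\psi_T$ with $\widehat\omega_T+\dc\hat\psi_T>0$. The maximum principle applied to $\varphi^j-\hat\psi$-type barriers then yields:
\begin{itemize}
\item a uniform bound $\|\varphi^j_t\|_\infty\le C$;
\item the time-derivative bounds $n\log t-C\le\partial_t\varphi^j\le C/t$ on $(0,T]$, obtained by applying the maximum principle to $t\dot\varphi^j-(\varphi^j-\varphi^j_0)-nt$ and to $\dot\varphi^j+A\varphi^j-n\log t$;
\item $t$-weighted $C^2$ bounds from the Tosatti--Weinkove trace estimate (with the Hermitian torsion terms handled as in their work), and then Evans--Krylov and Schauder estimates on compact subsets of $(0,T]\times X$.
\end{itemize}
All constants depend only on $\|\varphi_0^j\|_\infty$, so they are uniform in $j$.

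\textbf{Limit and uniqueness.} The comparison principle shows that $\varphi^j$ is nonincreasing in $j$. Together with the uniform estimates, $\varphi^j$ converges in $\mathcal C^\infty_{loc}((0,T_{\max})\times X)$ to a smooth solution $\varphi$ of \eqref{eq: cmaf}. For uniqueness, if $\varphi,\phi$ are two solutions with $\|\varphi_t-\varphi_0\|_\infty\to0$ and $\|\phi_t-\varphi_0\|_\infty\to0$, then for each $\varepsilon>0$ there is $s>0$ with $\|\varphi_s-\phi_s\|_\infty\le\varepsilon$. The maximum principle for \eqref{eq: cmaf} on $[s,T]$, applied to $\varphi-\phi\pm\varepsilon$ (the equation has no zero-order term, so constants are preserved), gives $\|\varphi_t-\phi_t\|_\infty\le\varepsilon$ for all $t\ge s$. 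Letting $\varepsilon\to0$ gives $\varphi=\phi$.

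\textbf{Main obstacle: convergence at $t=0$ in $L^\infty$.} The lower bound $\dot\varphi^j\ge n\log t-C$ integrates to $\varphi^j_t\ge\varphi^j_0+nt\log t-Ct$. Letting $j\to\infty$ gives $\varphi_t\ge\varphi_0-o(1)$ uniformly. For the upper bound I would compare $\varphi^j$ with $(1-\lambda(t))\varphi^k_0+\lambda(t)\hat\psi+Ct$, using a maximum-principle argument on $\varphi^j-\varphi^k_0$. This gives $\varphi^j_t\le\varphi^k_0+o_k(1)+Ct$ uniformly.

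To conclude, one needs $\|\varphi^k_0-\varphi_0\|_\infty\to0$, which is exactly where this step is delicate. A locally uniform limit of the smooth functions $\varphi_t$ must be continuous, so the stated $L^\infty$ convergence can only hold when $\varphi_0$ is continuous. In that case Dini's theorem makes the convergence $\varphi_0^k\searrow\varphi_0$ uniform, and the argument closes.

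For merely bounded $\varphi_0$, the same estimates still give monotone convergence $\varphi_t\to\varphi_0$ in $L^1$, together with the one-sided uniform bound $\varphi_t\ge\varphi_0-o(1)$. I would state the sup-norm convergence under the continuity hypothesis, as in the setting of Theorem~\ref{thmC}, where $\varphi_0$ is continuous.
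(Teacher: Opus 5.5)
Your proposal is correct and follows the same scheme as T\^o~\cite{to2018regularizing}: decreasing smooth approximation of $\varphi_0$, a priori estimates uniform in $j$, monotone passage to the limit, and uniqueness by the maximum principle; the paper itself quotes this theorem from T\^o without giving a proof. Your objection to the statement is also right: if $\|\varphi_t-\varphi_0\|_{L^\infty}\to0$, the smooth $\varphi_t$ are uniformly Cauchy, so they converge uniformly to a continuous $g$ with $g=\varphi_0$ almost everywhere, and since a quasi-psh function is recovered pointwise from its ball averages, $\varphi_0=g$ everywhere; hence the theorem as written holds only for continuous $\varphi_0$ (exactly the hypothesis under which it is used for Theorem~\ref{thmC}), while for merely bounded $\varphi_0$ the correct conclusion is the $L^1$ convergence already written in \eqref{eq: cmaf}.
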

As in~\cite{deruelle2025k}, we consider the case where $\eta=\textrm{Ric}(\omega_X)$, then $T_{\max}=+\infty$ and the equation~\eqref{eq: cmaf} becomes 
\begin{equation}
     (\omega_X+\dc\varphi_t)^n=e^{\partial_t\varphi}\omega_X^n,\quad\varphi_t\xrightarrow{t\to 0}\varphi_0,\;\text{in}\,L^1(\omega_X^n).
     \end{equation}
\begin{remark} In general, since we are interested in the behavior of the flow near $0$, we can assume that for $0<S<T_{\max}$, there is a constant $C_S>0$ such that 
\[ C_S^{-1}\omega_X\leq \widehat\omega_t\leq C_S\omega_X,\; \, \forall\, t\in[0,S].\]
   The uniform bounds in the sequel are independent of $t$.  
\end{remark}

     We further assume that $\varphi_0$ is continuous and $T_0^n=e^{\psi^+-\psi^-}\omega_X^n$ where $\psi^{\pm}$ are quasi-psh function on $X$ with $e^{-\psi^-}\in L^p(\omega_X^n)$, for $p>1$.
     \begin{proposition}
    \label{prop: estimates}
    
         There exists a uniform constant $C>0$ such that for all $0\leq t\leq 1$ and $x\in X$, 
         \begin{enumerate}
             \item $-C\leq \varphi\leq C$;
             \item $-C+\psi^+\leq  \partial_t\varphi_t\leq C-\psi^-$;
             \item $\omega_X+\dc\varphi_t\leq Ce^{-\delta\psi^-}\omega_X$ for some $\delta>0$.
         \end{enumerate}
     \end{proposition}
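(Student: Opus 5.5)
We prove the three estimates in order, working first with the smooth flow on $(0,1]$ and a smooth approximation of the initial data, and passing to the limit at the end; all constants will depend only on $X$, $\omega_X$, $p$ and an upper bound for $\|e^{-\psi^-}\|_p$. The main tool is the parabolic maximum principle applied to suitable auxiliary quantities. We use the comparison $C_S^{-1}\omega_X\leq\widehat\omega_t\leq C_S\omega_X$ from the preceding remark with $S=1$.

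For (1), the upper bound comes from the maximum principle. At a spatial maximum of $\varphi_t$ we have $\widehat\omega_t+\dc\varphi_t\leq\widehat\omega_t\leq C\omega_X$, hence $\partial_t\varphi\leq C$ there, and so $\sup\varphi_t\leq\sup\varphi_0+Ct$. For the lower bound we compare with a subsolution. Since $T_0^n=e^{\psi^+-\psi^-}\omega_X^n$ and $\psi^+$ is bounded above, the density lies in $L^p$. By \cite{kolodziej2015weak} there is a continuous $\omega_X$-psh function $u$ with $(\omega_X+\dc u)^n=c\, e^{\psi^+-\psi^-}\omega_X^n$ and a uniform bound on $\|u\|_\infty$; alternatively one can use $\varphi_0$ itself. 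We then take $\underline\varphi_t=(1-Ct)\varphi_0+Ct\,u-C't+n t\log t$ (or a similar expression). Its Monge--Amp\`ere measure dominates $(Ct)^n(\omega_X+\dc u)^n$, and the $n\log t$ term absorbs the resulting factor. The parabolic comparison principle then gives $\varphi\geq-C$.

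For (2), the upper bound $\partial_t\varphi\leq C-\psi^-$ is the delicate one. We consider $H=t\,\partial_t\varphi-\varphi-nt$ (the usual Guedj--Lu--Zeriahi / T\^o quantity), or better $\partial_t\varphi+\psi^-$ together with a correction $-A\varphi$. Differentiating the equation gives $(\partial_t-\Delta_{\omega_t})\partial_t\varphi=\tr_{\omega_t}\partial_t\widehat\omega_t$. Since $\psi^-$ is quasi-psh, $-\Delta_{\omega_t}\psi^-\leq C\tr_{\omega_t}\omega_X$, and this term is absorbed using $-A\Delta_{\omega_t}\varphi=-An+A\tr_{\omega_t}\widehat\omega_t$ with $A$ large. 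The key point is to control the initial value at $t=0$: there $\partial_t\varphi=\psi^+-\psi^-$ formally, so $\partial_t\varphi+\psi^-\leq\psi^+\leq C$. To make this rigorous we approximate $\varphi_0$, $\psi^\pm$ by smooth data with $\partial_t\varphi|_{t=0}=\log(T_{0,j}^n/\omega_X^n)$. The lower bound $\partial_t\varphi-\psi^+\geq -C$ is obtained symmetrically using $\partial_t\varphi-\psi^+ + A\varphi$, since $\psi^+$ is quasi-psh, so $\Delta_{\omega_t}\psi^+\geq -C\tr_{\omega_t}\omega_X$. In both cases, the Hermitian torsion terms only add bounded multiples of $\tr_{\omega_t}\omega_X$, which are again absorbed by the $A\varphi$ term.

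For (3), we run a Chern--Ricci flow version of the Aubin--Yau/Tosatti--Weinkove $C^2$ estimate. We apply the maximum principle to $Q=\log\tr_{\omega_X}\omega_t+\delta\psi^- - A\varphi+ \frac{1}{\tilde C+\varphi}$-type terms (the last term handles the torsion, as in Tosatti--Weinkove). We use the lower bound on the bisectional curvature of $\omega_X$, and the relation $\partial_t\varphi\leq C-\psi^-$ from (2) to control $\log\det$. The bound $-\Delta_{\omega_t}\psi^-\leq C\tr_{\omega_t}\omega_X$ is absorbed with $\delta$ small relative to $A$. At a maximum point we obtain $\tr_{\omega_t}\omega_X\leq C$, and then $\tr_{\omega_X}\omega_t\leq C e^{\partial_t\varphi}\,(\tr_{\omega_t}\omega_X)^{n-1}\leq Ce^{-\psi^-}$, which gives the bound $Ce^{-\delta\psi^-}$ after renaming $\delta$.

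I expect this last step to be the main obstacle. The torsion terms of $\omega_X$ in the Hermitian $C^2$ computation must be controlled, and they only allow a gradient-type cancellation at the maximum point. At the same time, the weight $e^{-\delta\psi^-}$ must be kept compatible with the unbounded $\psi^-$. One also has to check that the maximum cannot occur at $t=0$, which again requires working with the approximating smooth data and choosing a weight that is $-\infty$ where $\psi^-=-\infty$.
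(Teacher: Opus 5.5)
Your overall scheme follows the paper: the maximum principle for $\partial_t\varphi+\psi^--A\varphi$, a Tosatti--Weinkove type quantity for (3), and smooth approximation of $\psi^\pm$ with compatible smooth initial data. Your upper bound in (2), including the value $\psi^+\le C$ at $t=0$, is exactly the paper's argument. For (1) no subsolution is needed: at a spatial minimum $\omega_t\ge\widehat\omega_t$, so $\partial_t\varphi\ge -C$ there, and the approximating initial potentials are uniformly bounded by Ko\l odziej--Nguyen. Your subsolution built on $u$ with $(\omega_X+\dc u)^n=ce^{\psi^+-\psi^-}\omega_X^n$ would in fact fail where $e^{\psi^+}$ is small, since after cancelling the $t^n$ factors the required right-hand side is bounded below by a positive constant; a $u$ with constant density works.

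The first genuine gap is the lower bound in (2), which is \emph{not} symmetric. For $G=\partial_t\varphi-\psi^++A\varphi$ with $\psi^+\in\PSH(X,A\omega_X)$ you get $(\partial_t-\Delta_t)G=A(\partial_t\varphi-n)+\tr_{\omega_t}(A\omega_X+\dc\psi^+)\ge A(\partial_t\varphi-n)$. At a minimum point this only gives $\partial_t\varphi\le n$. The missing idea is to leave over a positive multiple of $\tr_{\omega_t}\omega_X$ (the paper uses the coefficient $A+1$) and to invoke $\tr_{\omega_t}\omega_X\ge ne^{-\partial_t\varphi/n}$, which follows from $\omega_t^n=e^{\partial_t\varphi}\omega_X^n$. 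Then $(A+1)(\partial_t\varphi-n)+ne^{-\partial_t\varphi/n}\le 0$ at the minimum forces $\partial_t\varphi\ge -C$.

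In (3) your plan to show the maximum cannot occur at $t=0$ cannot work. For the smooth approximants $\psi^-_j$ is finite, so no weight prevents the maximum of $Q$ from occurring at $t=0$. In any case, (3) at $t=0$ is a statement about $T_0$ alone, which no parabolic argument can produce. You must import the elliptic Laplacian estimate $\omega_X+\dc\varphi_{0,j}\le Ce^{-\psi^-_j/(2A)}\omega_X$, uniformly in $j$; the paper takes it from \cite[Theorem 4.2]{guedj2021quasi}.

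Your closing step is also a non sequitur. At the maximum point you only obtain $\tr_{\omega_X}\omega_t\le Ce^{\partial_t\varphi}\le Ce^{-\psi^-(x_0)}$, which is not uniform. So $\sup Q$ is bounded only if the coefficient of $\psi^-$ in $Q$ is at least $1$. The exponent in the final bound is that coefficient, not a renaming of $1$, and a small weight breaks the argument. The paper uses $Q=\log\tr_{\omega_X}\omega_t-M\tilde\varphi+e^{-\tilde\varphi}$ with $\tilde\varphi=\varphi-\frac{1}{2A}\psi^-+C\ge 0$ and $M=2(C+A)$, so the weight is $M/(2A)\ge 1$.

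Putting $\psi^-$ inside $\tilde\varphi$ is also what handles the torsion. At the critical point $\partial\log\tr_{\omega_X}\omega_t=(M+e^{-\tilde\varphi})\partial\tilde\varphi$, and the good term $-e^{-\tilde\varphi}|\partial\tilde\varphi|^2_{\omega_t}$ produced by $e^{-\tilde\varphi}$ absorbs the torsion contribution. A correction in $\varphi$ alone, such as $1/(\tilde C+\varphi)$, gives no control of $|\partial\psi^-|_{\omega_t}$.

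Finally, $\tr_{\omega_t}\omega_X\le C$ at the maximum is not automatic. The time derivative of the $-A\varphi$ term produces $-A\partial_t\varphi$, and $\partial_t\varphi$ has no uniform lower bound. The paper splits according to the sign of $\partial_t\varphi$. Alternatively, you can absorb $-\partial_t\varphi\le n\log(\tr_{\omega_t}\omega_X/n)$ into the negative multiple of $\tr_{\omega_t}\omega_X$.
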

           \begin{proof} We choose $A>1$ so that $\psi^{\pm}\in\PSH(X,A\omega_X)$. We normalize $\sup_X\psi^{\pm}\leq 0$. We let $\Delta_t$ denote the Laplacian with respect to $\omega_t=\omega_X+\dc\varphi_t$.
           
\textbf{Case 1:} $\psi^{\pm}$ are smooth. 
  The (1) follows immediately from the maximum principle. 

 We prove (2). 
Consider for $(t,x)\in [0,1]\times X$,
 \[ H(t,x)=\partial_t\varphi_t(x)+\psi^-(x)-A\varphi_t(x).\]
 If $H$ attains its maximum at $t=0$ we obtain   
 \[ H\leq AC.\]
Otherwise, $H$ attains its maximum at $(t_0,x_0)$ with $t_0>0$.  At $(t_0,x_0)$, we have
\[0\leq \left(\frac{\partial}{\partial t}-\Delta_t \right)H=A(n-\partial_t\varphi_t)-\tr_{\omega_t}(A\omega_X+\dc\psi^-)\leq A(n-\partial_t\varphi_t)  \]
 hence $\partial_{t}\varphi_{t_0}(x_0)\leq n$. This implies that
 \[H\leq n+AC. \]
Since $|\varphi_t(x)|\leq C$, we get the upper bound for $\partial_t\varphi_t$.

For the lower bound of $\partial_t\varphi_t$, we consider
\[ G(t,x)=\partial_t\varphi_t(x)-\psi^+ +(A+1)\varphi_t(x).\]
 If $G$ attains its minimum at $t=0$ then 
 \[G\geq \partial_t\varphi_{0}-\psi^+-(A+1)C\geq -\sup_X\psi^--(A+1)C\geq -C'. \]
Otherwise, $G$ attains its minimum at $(t_0,x_0)$ with $t_0>0$. 
Recall that the elementary inequality $\tr_\alpha\beta\geq n(\frac{\beta^n}{\alpha^n})^{1/n}$ for (1,1) forms $\alpha,\beta$. We have at $(t_0,x_0)$,
\[0\geq \left(\frac{\partial}{\partial t}-\Delta_t \right)G=(A+1)[\partial_t\varphi_t-n]-\tr_{\omega_t}(\omega_X)\geq(A+1)[\partial_t\varphi_t-n]+ne^{-\frac{1}{n}\partial_t\varphi_t}.  \] Thus $\partial_t\varphi_{t_0}(x_0)\geq -C(A,n)$. We infer that
\[G\geq -C(A,n)-\sup_X\psi^+-(A+1)C, \] which implies the desired estimate. 

The proof of (3) is almost to that of ~\cite[Lemma 5.5]{dang2026singularities}, we include here for readers' convenience. We set $\Tilde\varphi_t:=\varphi_t-\frac{1}{2A}\psi^{-}+C\geq 0$. We consider
\[Q(t,x)=\log\tr_{\omega_X}\omega_t(x)-M\Tilde{\varphi}_t(x)+e^{-\Tilde{\varphi}_t(x)}. \]
If $Q$  attains its maximum at $t=0$, we are done since ${\omega_X}+\dc\varphi_0\leq Ce^{-(2A)^{-1}\psi^-}\omega_X$ by~\cite[Theorem 4.2]{guedj2021quasi}. Otherwise, $Q$ attains its maximum at $(t_0,x_0)$ with $t_0>0$. In what follows, we compute at this point. For simplicity, we suppress the subscript $t$.
From~\cite[Proposition 3.1]{tosatti2015evolution} (also~\cite[(4.2)]{tosatti2015evolution}) we have
		\[ \left(\frac{\partial}{\partial t}-\Delta_{t}\right)\log\tr_{\omega_X}\omega\leq \frac{2}{(\tr_{\omega_X}\omega)^2}\textrm{Re}(g^{\Bar{q}k}(T_0)^p_{kp}\partial_{\Bar{q}}\tr_{\omega_X}\omega)+C\tr_\omega \omega_X,\]
		where $(T_0)^p_{kp}$ denote the torsion terms corresponding to $\omega_X$. At the maximum point $(t_0, x_0)$ of $Q$, we have $\partial_iQ=0$, hence \[\frac{1}{\tr_{\omega_X}\omega_t}\partial_i\tr_{\omega_X}\omega-M\partial_i\Tilde{\varphi}-e^{-\Tilde{\varphi}}\partial_i\Tilde{\varphi}=0.\]
		Thus, the Cauchy-Schwarz inequality yields
		\begin{equation*}
		\begin{split}
		\left| \frac{2}{(\tr_{\omega_X}\omega)^2}\textrm{Re}(g^{\Bar{q}k}(T_0)^p_{kp}\partial_{\Bar{q}}\tr_{\omega_X}\omega)\right|&\leq \left| \frac{2}{(\tr_{\omega_X}\omega)^2}\textrm{Re}((M+e^{-\Tilde{\varphi}})g^{\Bar{q}k}(T_0)^p_{kp}\partial_{\Bar{q}}\Tilde{\varphi}\right|\\
		&\leq e^{-\Tilde{\varphi}}|\partial\Tilde{\varphi}|^2_\omega+C(M+1)^2 e^{
        \Tilde{\varphi}}\frac{\tr_\omega \omega_X}{(\tr_{\omega_X}\omega)^2}
		\end{split}
		\end{equation*} for uniform $C>0$ only depending on the torsion term.
		It thus follows that, at the point $(t_0,x_0)$,
		\begin{equation}\label{eq: c2Q}
		\begin{split}
		0\leq \left(\frac{\partial}{\partial t}-\Delta_{t}\right)Q &\leq C(M+1)^2e^{\Tilde{\varphi}}\frac{\tr_\omega \omega_X}{(\tr_{\omega_X}\omega)^2}+C\tr_\omega \omega_X\\
		&\quad -(M+e^{-\Tilde{\varphi}})\partial_t{\f} +(M+e^{-\Tilde{\varphi}})\tr_\omega(\omega-(\omega_X+(2A)^{-1}\dc\psi^{-}))\\
		&\leq C(M+1)^2e^{\Tilde{\varphi}} \frac{\tr_\omega \omega_X}{(\tr_{\omega_X}\omega)^2}+(C-M/2)\tr_\omega \omega_X\\
        &\quad-(M+e^{-\Tilde{\varphi}})\partial_t\varphi+(M+1)n
		\end{split}
		\end{equation}
         If at $(t_0,x_0)$, we have $(\tr_{\omega_X}   \omega)^2\leq e^{\Tilde{\varphi}} C(M+1)^2$ then at the same point \[Q\leq C+\frac{1}{2}\Tilde{\varphi}-M\Tilde{\varphi}+e^{-\Tilde{\varphi}}\leq C+1\] since $\Tilde{\varphi}\geq 0$, we are done. Otherwise, we choose $M=2(C+A)$. Hence, from~\eqref{eq: c2Q} one gets \begin{equation}\label{eq: Lap-final}
             \tr_{\omega}\omega_X\leq- (2C+2A+e^{-\Tilde\varphi})\partial_t\varphi_t+n(2C+2A+1). 
         \end{equation} 
         Recall that \[\tr_{\omega_X}\omega\leq n\left(\frac{\omega^n}{\omega_X^n} \right)(\tr_{\omega_t}\omega_X)^{n-1}=e^{\partial_t\varphi}(\tr_{\omega_t}\omega_X)^{n-1}. \]
We divide into two cases.
\begin{itemize}
    \item If $\partial_t\varphi_{t_0}(x_0)\geq 0$ then  \eqref{eq: Lap-final} becomes $tr_{\omega}\omega_X(t_0,x_0)\leq n(2C+2A+1)$. This implies that at $(t_0,x_0)$ \[{\rm tr}_{\omega_X}\omega\leq C'e^{C-\psi^-} \]
    because by (2) $\partial_t\varphi\leq C-\psi^-$. It follows that
    \[Q\leq Q(t_0,x_0)\leq C'-\psi^- - 2(C+A)\varphi+(C/A+1)\psi^-+1\leq C''. \]
\item  If $\partial_t\varphi_{t_0}(x_0)\leq 0$ then at $(t_0,x_0)$
\[ \tr_{\omega_X}\omega\leq e^{\partial_t\varphi}(-C'\partial_t\varphi+C')^{n-1}\leq C''.\] Thus $Q$ is uniformly bounded from above.  
\end{itemize}
Since $\varphi_t$ is uniformly bounded, we obtain the desired estimate.

\medskip
\textbf{Case 2:} $\psi^{\pm}$ are merely $A\omega_X$-psh function. Thanks to the regularization theorem~\cite{demailly1992regularization,blocki2007regularization}, we can find sequences $(\psi^{\pm}_j)_{j\geq 1}$ of smooth $A\omega_X$-psh functions decreasing pointwise to $\psi^{\pm}$. It follows from Tosatti-Weinkove's main theorem~\cite{tosatti2010complex} that there exist $\varphi_{0,j}\in\PSH(X,\omega_X)\cap\mathcal{C}^\infty(X)$ and $c_j\in\mathbb R$ which solve \[(\omega_X+\dc\varphi_{0,j})^n=e^{\psi^+_j-\psi^-_j+c_j}\omega_X^n,\quad\sup_X\varphi_{0,j}=0. \] We see that for $p>1$, $\|e^{-\psi^-_j}\|_p$ are uniformly bounded. It thus follows from~\cite{kolodziej2015weak} that $\|\varphi_{0,j}\|_{\infty}$ are uniformly (independent of $j$).
From~\cite{tosatti2015evolution,to2018regularizing}, there exists a smooth solution to the corresponding complex Monge-Amp\`ere flow
\[(\omega_X+\dc\varphi_{t,j})^n=e^{\partial_t\varphi_{t,j}}\omega_X^n,\quad \varphi_{t,j}|_{t=0}=\varphi_{0,j}. \]
We observe that $\varphi_{t,j}$ converges to $\varphi_t$ in $\mathcal{C}^\infty((0,T)\times X)$; see, for example, \cite[Theorem B]{to2018regularizing}. 
Since $\psi_j^{\pm}$ are decreasing, we obtain the desired estimates for $\varphi_{t,j}$, and hence also for $\varphi_t$.
           \end{proof}
        From our previous estimates, we apply the complex parabolic Evans-Krylov theory together with Schauder’s estimates (see also~\cite{sherman-weinkove13-estimates}) to obtain the following higher order a priori estimates.
\begin{proposition}
   Assume $\psi^{\pm}$ is smooth in some open Zariski set $\Omega\subset X$. For any compact subset $K\subset \Omega$ and $0<S<+\infty$, there exists constants $C(k,S)$ such that \[\|\varphi\|_{\mathcal{C}^k([0,S]\times K)}\leq C(k,S). \]
   Moreover, $\varphi_t$ converges to $\varphi_0$ in $\mathcal{C}^{\infty}_{\rm loc}(\Omega)$ as $t\to 0$.
\end{proposition}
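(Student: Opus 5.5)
The plan is to localize and reduce to the standard parabolic regularity theory for the (twisted) Chern--Ricci flow on the Zariski open set $\Omega$. Fix a compact $K\subset\Omega$ and choose open sets $K\subset U_2\Subset U_1\Subset\Omega$. On $\overline{U_1}$ the functions $\psi^\pm$ are smooth, hence bounded. Proposition~\ref{prop: estimates} then gives, uniformly for $t\in[0,1]$:
\begin{itemize}
\item a uniform $C^0$ bound on $\varphi_t$;
\item a two-sided bound on $\partial_t\varphi_t$ on $U_1$;
\item an upper bound $\omega_t\leq C\omega_X$ on $U_1$.
\end{itemize}
Since $\omega_t^n=e^{\partial_t\varphi}\omega_X^n$ and $\partial_t\varphi$ is bounded below on $U_1$, we get $\omega_t^n\geq c\,\omega_X^n$. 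Together with the trace upper bound this yields $\omega_t\geq C^{-1}\omega_X$. So $\omega_t$ is uniformly equivalent to $\omega_X$ on $[0,1]\times U_1$, including up to $t=0$. For $t\in[1,S]$ we are in the smooth regime, where the bounds of~\cite{to2018regularizing} apply directly.

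Next, I would work with the smooth approximants $\varphi_{t,j}$ from Case~2 of the proof of Proposition~\ref{prop: estimates}. Their initial data $\varphi_{0,j}$ solve $(\omega_X+\dc\varphi_{0,j})^n=e^{\psi^+_j-\psi^-_j+c_j}\omega_X^n$. On $U_1$ these right-hand sides converge smoothly, since $\psi^\pm_j\to\psi^\pm$ locally smoothly there; if necessary I would choose the regularizations so that this holds. By local elliptic theory (Laplacian bound, Evans--Krylov, Schauder), $\varphi_{0,j}$ is then bounded in $C^k(U_2)$, and its limit $\varphi_0$ is smooth on $\Omega$.

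For the flow I would write it locally as $\partial_t\varphi=\log\det(\hat\omega_t+\dc\varphi)-\log\det\omega_X$. This operator is concave and uniformly parabolic on $[0,1]\times U_1$ by the equivalence above. The local complex parabolic Evans--Krylov theorem (as in~\cite{sherman-weinkove13-estimates}, adapted to Hermitian torsion terms, which are lower order) gives $C^{2+\beta,1+\beta/2}$ bounds on $[0,S]\times U_2'$. Here the parabolic boundary at $t=0$ is handled by the smooth local bounds on $\varphi_{0,j}$. Bootstrapping by parabolic Schauder estimates, after differentiating the equation in space and time, then yields $C^k$ bounds on $[0,S]\times K$ uniform in $j$. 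Passing to the limit gives $\|\varphi\|_{\mathcal C^k([0,S]\times K)}\leq C(k,S)$.

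The convergence $\varphi_t\to\varphi_0$ in $\mathcal C^\infty_{\rm loc}(\Omega)$ follows from these bounds by Arzelà--Ascoli, since $\varphi_t\to\varphi_0$ uniformly. The main obstacle is the estimate near $t=0$, that is, the local Evans--Krylov step at the parabolic boundary. The interior estimates need two ingredients there:
\begin{itemize}
\item the uniform equivalence of $\omega_t$ and $\omega_X$ at small times, which is exactly where items (2) and (3) of Proposition~\ref{prop: estimates} enter;
\item uniform smooth control of the initial data on $U_1$.
\end{itemize}
If a boundary-in-time Evans--Krylov is not directly available, I would instead use the fact that $\partial_t\varphi$ and the spatial derivatives satisfy linear parabolic equations. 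I would then apply Krylov--Safonov with Hölder initial data to get $C^\beta$ bounds up to $t=0$, and follow this with Schauder estimates.
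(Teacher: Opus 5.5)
Your proposal is correct and follows the paper's route: the paper simply applies the complex parabolic Evans--Krylov theory and Schauder estimates (citing \cite{sherman-weinkove13-estimates}) to the $C^0$, $\partial_t\varphi$ and trace bounds of Proposition~\ref{prop: estimates}; you additionally make explicit the lower bound $\omega_t\geq C^{-1}\omega_X$ on compact subsets of $\Omega$ and the use of the smooth approximants $\varphi_{t,j}$ to reach $t=0$. One caveat is that your fallback in the last paragraph does not close as written: Krylov--Safonov on the differentiated equations only gives $C^{1,\beta}$-type control, but Schauder needs the coefficients $g^{i\bar j}$, i.e.\ second derivatives of $\varphi$, to be H\"older, so an Evans--Krylov-type $C^{2,\beta}$ step cannot be skipped (either parabolic up to $t=0$ with the smooth initial data, or elliptic on each time slice using a H\"older bound on $\partial_t\varphi$).
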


    \begin{theorem}
        For $0<S<+\infty$ and any $x,y\in X$, we have
        \[ \textrm{diam}(X,\omega_t)\leq C\qquad d_{\omega_t}(x,y)\leq Cd_{\omega_X}(x,y)^\alpha,\;\forall\, t\in [0,S],\]
        for some constants $C,\alpha>0$ that only depend on $X$, $\omega_X$, $p$ and an upper bound for $\|e^{-\psi^-}\|_p$.
    \end{theorem}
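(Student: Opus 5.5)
The plan is to treat each time slice $\omega_t=\widehat\omega_t+\dc\varphi_t$ as the solution of an elliptic Monge--Amp\`ere equation with $L^{p'}$ right-hand side, uniformly in $t\in[0,S]$. Then Theorem~\ref{thm: stability-L1} gives a uniform H\"older modulus for $\varphi_t$, and Proposition~\ref{prop: diameter} turns this into a H\"older bound on $d_{\omega_t}$. Since Proposition~\ref{prop: diameter} is stated on an open set $U$ where $\omega$ is Hermitian, I would take $U=X$ for $t>0$, where the flow is smooth. For $t=0$ I would pass to the limit $t\to0^+$, using $\|\varphi_t-\varphi_0\|_\infty\to0$ together with the uniform estimate. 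By compactness of $X$, the H\"older bound $d_{\omega_t}(x,y)\le C d_{\omega_X}(x,y)^\alpha$ immediately gives $\mathrm{diam}(X,\omega_t)\le C\,\mathrm{diam}(X,\omega_X)^\alpha$.

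\emph{Step 1 (uniform $L^{p'}$ density).} By Proposition~\ref{prop: estimates}(1)--(2), for $t\in[0,S]$ we have $|\varphi_t|\le C$ and $\partial_t\varphi_t\le C-\psi^-$. Hence
\[(\widehat\omega_t+\dc\varphi_t)^n=e^{\partial_t\varphi_t}\omega_X^n\le Ce^{-\psi^-}\omega_X^n=:f_t\,\omega_X^n,\]
with $\|f_t\|_p\le C\|e^{-\psi^-}\|_p$ uniformly in $t$. The density is only bounded above rather than known exactly, but the stability argument in Theorem~\ref{thm :stability-big} only uses the bound $\omega_\varphi^n\le c f\,dV_X$ on the set $\{\varphi<u-\delta-C_0\lambda\}$ in the domination step. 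So the proof of Theorem~\ref{thm: stability-L1} goes through verbatim for the form $\widehat\omega_t$. It remains to note that the family $\widehat\omega_t$ is uniformly equivalent to $\omega_X$ (the Remark before Proposition~\ref{prop: estimates}), so that the constants in Lemma~\ref{lem: subsolution}, the Kiselman--Legendre estimate~\eqref{eq: regularization} and~\eqref{eq: integral} are uniform in $t$. One small point: $\widehat\omega_t$ is not the metric used to build $\rho_\delta$. I would keep $\rho_\delta$ defined via $\omega_X$ and use $\widehat\omega_t\ge C_S^{-1}\omega_X$ to absorb the negative part $-(Ac+2K\delta)\omega_X$ after rescaling, exactly as in the proof of Theorem~\ref{thm: stability-L1}. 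The output is $\rho_\delta\varphi_t-\varphi_t\le C\delta^{2\alpha}$ uniformly in $t$, hence $m_{\varphi_t}(r)\le Cr^{2\alpha}$ uniformly.

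\emph{Step 2 (distance bound).} For $t>0$, apply Proposition~\ref{prop: diameter} with $U=X$ and $K=X$ to the Hermitian form $\omega_t$, with background $\widehat\omega_t$ in place of $\omega_X$. The modulus $m(r)=Cr^{2\alpha}$ satisfies the Dini condition, with $m_1(r)=\frac{C}{2\alpha}r^{2\alpha}$. The main obstacle, I expect, is checking that the constant $C_K$ in Proposition~\ref{prop: diameter} is uniform in $t$. I would retrace its proof: the cutoff bounds~\eqref{eq:good_cutoff}, the torsion bounds~\eqref{eq:cst_B}, Cauchy--Schwarz~\eqref{eq:cauchy_schwarz} and the Poincar\'e constants depend only on $\widehat\omega_t$, which stays in a compact family of smooth forms for $t\in[0,S]$. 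The integral $I$ is then controlled by $\sup|\varphi_t-\varphi_t(p)|$ on the ball, i.e.\ by the uniform modulus from Step 1. Because $K=X$ is compact, I would also avoid the radius $r_p$: the balls $B_{\omega_X}(p,r)$ with $r$ below the injectivity radius suffice, and larger distances are handled by chaining finitely many balls. The Morrey--Campanato step then gives $d_{\omega_t}(p,x)\le C d_{\omega_X}(p,x)^{2\alpha}$ uniformly; renaming $2\alpha$ as $\alpha$ gives the claimed bound.

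\emph{Step 3 ($t=0$).} Here $\varphi_0$ is continuous and $T_0$ may be degenerate. I would define $d_{T_0}$ as the limit of $d_{\omega_t}$, or equivalently apply the uniform estimate on the smooth locus $\Omega$ and extend to $X$ by density using the uniform H\"older bound, which is uniformly equicontinuous in $t$. This yields the statement for all $t\in[0,S]$ with constants depending only on $X$, $\omega_X$, $p$ and $\|e^{-\psi^-}\|_p$.
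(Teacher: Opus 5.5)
Your proposal is correct and follows the same route as the paper. The paper's proof just notes that Proposition~\ref{prop: estimates}(2) gives the $t$-uniform bound $\|e^{\partial_t\varphi_t}\|_p\le C\|e^{-\psi^-}\|_p$, and then invokes Theorem~\ref{thm: stability-L1} and Proposition~\ref{prop: diameter}, which are exactly your Steps~1--2. Your extra bookkeeping fills in points the paper leaves implicit: uniformity of the constants in $t$ for the reference forms $\widehat\omega_t$, chaining balls on the compact $X$, and how to read the statement at $t=0$. One small note: the density $e^{\partial_t\varphi_t}$ is known exactly, so your detour through the upper bound $Ce^{-\psi^-}$ is harmless but unnecessary.
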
   
    \begin{proof}
        By Proposition~\ref{prop: estimates} (2), $\|e^{\partial_t\varphi_t}\|_p\leq C\|e^{-\psi^-}\|_p$ is uniformly bounded in $t\in[0,S]$. The
result now follows from Theorem~\ref{thm: stability-L1} and Proposition~\ref{prop: diameter}. 
    \end{proof}
\bibliographystyle{alpha}
	\bibliography{bibfile}	

@article {bedford1976dirichlet,
    AUTHOR = {Bedford, E. and Taylor, B. A.},
     TITLE = {The {D}irichlet problem for a complex {M}onge-{A}mp\`ere
              equation},
   JOURNAL = {Invent. Math.},
  FJOURNAL = {Inventiones Mathematicae},
    VOLUME = {37},
      YEAR = {1976},
    NUMBER = {1},
     PAGES = {1--44},
      ISSN = {0020-9910},
   MRCLASS = {32F05},
MRREVIEWER = {Jacques Vauthier},
       DOI = {10.1007/BF01418826},
       URL = {https://doi.org/10.1007/BF01418826},
}

@article {bedford1982new,
    AUTHOR = {Bedford, E. and Taylor, B. A.},
     TITLE = {A new capacity for plurisubharmonic functions},
   JOURNAL = {Acta Math.},
  FJOURNAL = {Acta Mathematica},
    VOLUME = {149},
      YEAR = {1982},
    NUMBER = {1-2},
     PAGES = {1--40},
      ISSN = {0001-5962},
   MRCLASS = {32F05 (31C10 32C30)},
MRREVIEWER = {Guy Laville},
       DOI = {10.1007/BF02392348},
       URL = {https://doi.org/10.1007/BF02392348},
}

@article {blocki2007regularization,
    AUTHOR = {B{\l}ocki, Z. and Ko{\l}odziej, S.},
     TITLE = {On regularization of plurisubharmonic functions on manifolds},
   JOURNAL = {Proc. Amer. Math. Soc.},
  FJOURNAL = {Proceedings of the American Mathematical Society},
    VOLUME = {135},
      YEAR = {2007},
    NUMBER = {7},
     PAGES = {2089--2093},
      ISSN = {0002-9939},
   MRCLASS = {32U05 (32U25)},
MRREVIEWER = {Norman Levenberg},
       DOI = {10.1090/S0002-9939-07-08858-2},
       URL = {https://doi.org/10.1090/S0002-9939-07-08858-2},
}

@article {boucksom2010monge,
    AUTHOR = {Boucksom, S. and Eyssidieux, P. and Guedj,
              V. and Zeriahi, A.},
     TITLE = {Monge-{A}mp\`ere equations in big cohomology classes},
   JOURNAL = {Acta Math.},
  FJOURNAL = {Acta Mathematica},
    VOLUME = {205},
      YEAR = {2010},
    NUMBER = {2},
     PAGES = {199--262},
      ISSN = {0001-5962},
   MRCLASS = {32U40 (32Q20 32U15 32W20)},
  MRNUMBER = {2746347},
MRREVIEWER = {S\l awomir Dinew},
       DOI = {10.1007/s11511-010-0054-7},
       URL = {https://doi.org/10.1007/s11511-010-0054-7},
}

@article {cherrier1987equation,
    AUTHOR = {Cherrier, P.},
     TITLE = {\'{E}quations de {M}onge-{A}mp\`ere sur les vari\'{e}t\'{e}s hermitiennes
              compactes},
   JOURNAL = {Bull. Sci. Math. (2)},
  FJOURNAL = {Bulletin des Sciences Math\'{e}matiques. 2e S\'{e}rie},
    VOLUME = {111},
      YEAR = {1987},
    NUMBER = {4},
     PAGES = {343--385},
      ISSN = {0007-4497},
   MRCLASS = {58G30 (32C10 35J60 53C55)},
  MRNUMBER = {921559},
MRREVIEWER = {John M. Lee},
}

@article {hanani1996generalisation,
    AUTHOR = {Hanani, A.},
     TITLE = {Une g\'{e}n\'{e}ralisation de l'\'{e}quation de {M}onge-{A}mp\`ere sur les
              vari\'{e}t\'{e}s hermitiennes compactes},
   JOURNAL = {Bull. Sci. Math.},
  FJOURNAL = {Bulletin des Sciences Math\'{e}matiques},
    VOLUME = {120},
      YEAR = {1996},
    NUMBER = {2},
     PAGES = {215--252},
      ISSN = {0007-4497},
   MRCLASS = {32F07 (32J99 35J60)},
  MRNUMBER = {1387422},
MRREVIEWER = {Thierry Bouche},
}

@article{hanani1996equation,
    AUTHOR = {Hanani, A.},
     TITLE = {\'{E}quations du type de {M}onge-{A}mp\`ere sur les vari\'{e}t\'{e}s
              hermitiennes compactes},
   JOURNAL = {J. Funct. Anal.},
  FJOURNAL = {Journal of Functional Analysis},
    VOLUME = {137},
      YEAR = {1996},
    NUMBER = {1},
     PAGES = {49--75},
      ISSN = {0022-1236},
   MRCLASS = {32F07 (58G30)},
  MRNUMBER = {1383012},
MRREVIEWER = {S\l awomir Ko\l odziej},
       DOI = {10.1006/jfan.1996.0040},
       URL = {https://doi.org/10.1006/jfan.1996.0040},
}

@article {demailly1992regularization,
    AUTHOR = {Demailly, J.-P.},
     TITLE = {Regularization of closed positive currents and intersection
              theory},
   JOURNAL = {J. Algebraic Geom.},
  FJOURNAL = {Journal of Algebraic Geometry},
    VOLUME = {1},
      YEAR = {1992},
    NUMBER = {3},
     PAGES = {361--409},
      ISSN = {1056-3911},
   MRCLASS = {32C30 (32C17 32J25)},
MRREVIEWER = {Takeo Ohsawa},
}

@incollection {demailly1994regularization,
    AUTHOR = {Demailly, J.-P.},
     TITLE = {Regularization of closed positive currents of type {$(1,1)$}
              by the flow of a {C}hern connection},
 BOOKTITLE = {Contributions to complex analysis and analytic geometry},
    SERIES = {Aspects Math., E26},
     PAGES = {105--126},
 PUBLISHER = {Friedr. Vieweg, Braunschweig},
      YEAR = {1994},
   MRCLASS = {32C30 (32J25)},
  MRNUMBER = {1319346},
MRREVIEWER = {Mongi Blel},
}

@article {demailly2004numerical,
    AUTHOR = {Demailly, J.-P. and Paun, M.},
     TITLE = {Numerical characterization of the {K}\"{a}hler cone of a compact
              {K}\"{a}hler manifold},
   JOURNAL = {Ann. of Math. (2)},
  FJOURNAL = {Annals of Mathematics. Second Series},
    VOLUME = {159},
      YEAR = {2004},
    NUMBER = {3},
     PAGES = {1247--1274},
      ISSN = {0003-486X},
   MRCLASS = {32J27 (32Q15)},
  MRNUMBER = {2113021},
MRREVIEWER = {Philippe P. Eyssidieux},
       DOI = {10.4007/annals.2004.159.1247},
       URL = {https://doi.org/10.4007/annals.2004.159.1247},
}

@article {demailly2014holder,
    AUTHOR = {Demailly, J.-P. and Dinew, S. and Guedj, V.
              and Pham, H. H. and Ko{\l}odziej, S. and Zeriahi,
              A.},
     TITLE = {H\"{o}lder continuous solutions to {M}onge-{A}mp\`ere equations},
   JOURNAL = {J. Eur. Math. Soc. (JEMS)},
  FJOURNAL = {Journal of the European Mathematical Society (JEMS)},
    VOLUME = {16},
      YEAR = {2014},
    NUMBER = {4},
     PAGES = {619--647},
      ISSN = {1435-9855},
   MRCLASS = {32W20 (32Q15 32U05 32U15 32U40 35B65 35J96 53C55)},
  MRNUMBER = {3191972},
MRREVIEWER = {Muhammed Ali Alan},
       DOI = {10.4171/JEMS/442},
       URL = {https://doi.org/10.4171/JEMS/442},
}

@article{demaillycomplex,
  title={Complex Analytic and Differential Geometry},
  author={Demailly, J.-P.},
  journal={available at \href{https://www-fourier.ujf-grenoble.fr/~demailly/manuscripts/agbook.pdf}{https://www-fourier.ujf-grenoble.fr/~demailly/manuscripts/agbook.pdf}},
  year={2012},
URL={https://www-fourier.ujf-grenoble.fr/~Demailly/manuscripts/agbook.pdf}
}

@incollection {dinew2012pluripotential,
    AUTHOR = {Dinew, S. and Ko{\l}odziej, S.},
     TITLE = {Pluripotential estimates on compact {H}ermitian manifolds},
 BOOKTITLE = {Advances in geometric analysis},
    SERIES = {Adv. Lect. Math. (ALM)},
    VOLUME = {21},
     PAGES = {69--86},
 PUBLISHER = {Int. Press, Somerville, MA},
      YEAR = {2012},
   MRCLASS = {32U05 (32U40 32W20 53C55)},
MRREVIEWER = {Slimane Benelkourchi},
}

@article {di2017uniqueness,
    AUTHOR = {Di{ }Nezza, E. and Lu, C. H.},
     TITLE = {Uniqueness and short time regularity of the weak
              {K}\"{a}hler-{R}icci flow},
   JOURNAL = {Adv. Math.},
  FJOURNAL = {Advances in Mathematics},
    VOLUME = {305},
      YEAR = {2017},
     PAGES = {953--993},
      ISSN = {0001-8708},
   MRCLASS = {32Q15 (32U40 32W20 53C44 53C55)},
  MRNUMBER = {3570152},
MRREVIEWER = {Valentino Tosatti},
       DOI = {10.1016/j.aim.2016.10.011},
       URL = {https://doi.org/10.1016/j.aim.2016.10.011},
}

@article {eyssidieux2009singular,
    AUTHOR = {Eyssidieux, P. and Guedj, V. and Zeriahi, A.},
     TITLE = {Singular {K}\"{a}hler-{E}instein metrics},
   JOURNAL = {J. Amer. Math. Soc.},
  FJOURNAL = {Journal of the American Mathematical Society},
    VOLUME = {22},
      YEAR = {2009},
    NUMBER = {3},
     PAGES = {607--639},
      ISSN = {0894-0347},
   MRCLASS = {32Q20 (31C10 32J27 32Q25 32W20)},
  MRNUMBER = {2505296},
MRREVIEWER = {Zhou Zhang},
       DOI = {10.1090/S0894-0347-09-00629-8},
       URL = {https://doi.org/10.1090/S0894-0347-09-00629-8},
}

@article {fornaess1980levi,
    AUTHOR = {Forn{\ae}ss, J. E. and Narasimhan, R.},
     TITLE = {The {L}evi problem on complex spaces with singularities},
   JOURNAL = {Math. Ann.},
  FJOURNAL = {Mathematische Annalen},
    VOLUME = {248},
      YEAR = {1980},
    NUMBER = {1},
     PAGES = {47--72},
      ISSN = {0025-5831},
   MRCLASS = {32E10},
  MRNUMBER = {569410},
MRREVIEWER = {Peter Pflug},
       DOI = {10.1007/BF01349254},
       URL = {https://doi.org/10.1007/BF01349254},
}

@article {guedj2008holder,
    AUTHOR = {Guedj, V. and Kolodziej, S. and Zeriahi, A.},
     TITLE = {H\"{o}lder continuous solutions to {M}onge-{A}mp\'{e}re equations},
   JOURNAL = {Bull. Lond. Math. Soc.},
  FJOURNAL = {Bulletin of the London Mathematical Society},
    VOLUME = {40},
      YEAR = {2008},
    NUMBER = {6},
     PAGES = {1070--1080},
      ISSN = {0024-6093},
   MRCLASS = {32W20 (32U15 35J60)},
  MRNUMBER = {2471956},
MRREVIEWER = {Filippo Bracci},
       DOI = {10.1112/blms/bdn092},
       URL = {https://doi.org/10.1112/blms/bdn092},
}

@article {guedj2012stability,
    AUTHOR = {Guedj, V. and Zeriahi, A.},
     TITLE = {Stability of solutions to complex {M}onge-{A}mp\`ere equations
              in big cohomology classes},
   JOURNAL = {Math. Res. Lett.},
  FJOURNAL = {Mathematical Research Letters},
    VOLUME = {19},
      YEAR = {2012},
    NUMBER = {5},
     PAGES = {1025--1042},
      ISSN = {1073-2780},
   MRCLASS = {32W20 (32Q15 32U40)},
  MRNUMBER = {3039828},
MRREVIEWER = {S\l awomir Ko\l odziej},
       DOI = {10.4310/MRL.2012.v19.n5.a6},
       URL = {https://doi.org/10.4310/MRL.2012.v19.n5.a6},
}

@book {guedj2017degenerate,
    AUTHOR = {Guedj, V. and Zeriahi, A.},
     TITLE = {Degenerate complex {M}onge-{A}mp\`ere equations},
    SERIES = {EMS Tracts in Mathematics},
    VOLUME = {26},
 PUBLISHER = {European Mathematical Society (EMS), Z\"{u}rich},
      YEAR = {2017},
     PAGES = {xxiv+472},
      ISBN = {978-3-03719-167-5},
   MRCLASS = {32W20 (32Q20 32U15 32U20 32U40 35J96)},
MRREVIEWER = {Slimane Benelkourchi},
       DOI = {10.4171/167},
       URL = {https://doi.org/10.4171/167},
}

@article {guedj2017regularizing,
    AUTHOR = {Guedj, V. and Zeriahi, A.},
     TITLE = {Regularizing properties of the twisted {K}\"{a}hler-{R}icci flow},
   JOURNAL = {J. Reine Angew. Math.},
  FJOURNAL = {Journal f\"{u}r die Reine und Angewandte Mathematik. [Crelle's
              Journal]},
    VOLUME = {729},
      YEAR = {2017},
     PAGES = {275--304},
      ISSN = {0075-4102},
   MRCLASS = {53C44 (53C55)},
  MRNUMBER = {3680377},
MRREVIEWER = {Yuanqi Wang},
       DOI = {10.1515/crelle-2014-0105},
       URL = {https://doi.org/10.1515/crelle-2014-0105},
}

@article {guedj2020pluripotential,
    AUTHOR = {Guedj, V. and Lu, C. H. and Zeriahi, A.},
     TITLE = {Pluripotential {K}\"{a}hler-{R}icci flows},
   JOURNAL = {Geom. Topol.},
  FJOURNAL = {Geometry \& Topology},
    VOLUME = {24},
      YEAR = {2020},
    NUMBER = {3},
     PAGES = {1225--1296},
      ISSN = {1465-3060},
   MRCLASS = {53E30 (32W20 58J35)},
       DOI = {10.2140/gt.2020.24.1225},
       URL = {https://doi.org/10.2140/gt.2020.24.1225},
}

@article {kolodziej1998complex,
    AUTHOR = {Ko{\l}odziej, S.},
     TITLE = {The complex {M}onge-{A}mp\`ere equation},
   JOURNAL = {Acta Math.},
  FJOURNAL = {Acta Mathematica},
    VOLUME = {180},
      YEAR = {1998},
    NUMBER = {1},
     PAGES = {69--117},
      ISSN = {0001-5962},
   MRCLASS = {32F07 (32C17 35J60)},
  MRNUMBER = {1618325},
MRREVIEWER = {M. Klimek},
       DOI = {10.1007/BF02392879},
       URL = {https://doi.org/10.1007/BF02392879},
}

@incollection {kolodziej2015weak,
    AUTHOR = {Ko{\l}odziej, S. and Nguyen, N. C.},
     TITLE = {Weak solutions to the complex {M}onge-{A}mp\`ere equation on
              {H}ermitian manifolds},
 BOOKTITLE = {Analysis, complex geometry, and mathematical physics: in honor
              of {D}uong {H}. {P}hong},
    SERIES = {Contemp. Math.},
    VOLUME = {644},
     PAGES = {141--158},
 PUBLISHER = {Amer. Math. Soc., Providence, RI},
      YEAR = {2015},
   MRCLASS = {32W20 (32U40)},
MRREVIEWER = {Ly Kim Ha},
       DOI = {10.1090/conm/644/12775},
       URL = {https://doi.org/10.1090/conm/644/12775},
}

@article {kolodziej2018holder,
    AUTHOR = {Ko{\l}odziej, S. and Nguyen, N. C.},
     TITLE = {H\"{o}lder continuous solutions of the {M}onge-{A}mp\`ere equation
              on compact {H}ermitian manifolds},
   JOURNAL = {Ann. Inst. Fourier (Grenoble)},
  FJOURNAL = {Universit\'{e} de Grenoble. Annales de l'Institut Fourier},
    VOLUME = {68},
      YEAR = {2018},
    NUMBER = {7},
     PAGES = {2951--2964},
      ISSN = {0373-0956},
   MRCLASS = {53C55 (32U40 35J96 35R01)},
MRREVIEWER = {Nguyen Xuan Hong},
       URL = {http://aif.cedram.org/item?id=AIF_2018__68_7_2951_0},
}

@article {kolodziej2019stability,
    AUTHOR = {Ko{\l}odziej, S. and Nguyen, N. C.},
     TITLE = {Stability and regularity of solutions of the {M}onge-{A}mp\`ere
              equation on {H}ermitian manifolds},
   JOURNAL = {Adv. Math.},
  FJOURNAL = {Advances in Mathematics},
    VOLUME = {346},
      YEAR = {2019},
     PAGES = {264--304},
      ISSN = {0001-8708},
   MRCLASS = {53C55 (32U40 32W20 35J96)},
MRREVIEWER = {Masaya Kawamura},
       DOI = {10.1016/j.aim.2019.02.004},
       URL = {https://doi.org/10.1016/j.aim.2019.02.004},
}

@article {kolodziej2021continuous,
    AUTHOR = {Ko{\l}odziej, S. and Nguyen, N. C.},
     TITLE = {Continuous solutions to {M}onge-{A}mp\`ere equations on
              {H}ermitian manifolds for measures dominated by capacity},
   JOURNAL = {Calc. Var. Partial Differential Equations},
  FJOURNAL = {Calculus of Variations and Partial Differential Equations},
    VOLUME = {60},
      YEAR = {2021},
    NUMBER = {3},
     PAGES = {Paper No. 93, 18},
      ISSN = {0944-2669},
   MRCLASS = {32U40 (32W20 53C55)},
  MRNUMBER = {4249871},
       DOI = {10.1007/s00526-021-01944-4},
       URL = {https://doi.org/10.1007/s00526-021-01944-4},
}

@article {nguyen2016complex,
    AUTHOR = {Nguyen, N. C.},
     TITLE = {The complex {M}onge-{A}mp\`ere type equation on compact
              {H}ermitian manifolds and applications},
   JOURNAL = {Adv. Math.},
  FJOURNAL = {Advances in Mathematics},
    VOLUME = {286},
      YEAR = {2016},
     PAGES = {240--285},
      ISSN = {0001-8708},
   MRCLASS = {32W20 (32U05 32U40 53C55)},
MRREVIEWER = {Bianca Santoro},
       DOI = {10.1016/j.aim.2015.09.009},
       URL = {https://doi.org/10.1016/j.aim.2015.09.009},
}

@article {to2018regularizing,
    AUTHOR = {Tô, T. D.},
     TITLE = {Regularizing properties of complex {M}onge-{A}mp\`ere flows
              {II}: {H}ermitian manifolds},
   JOURNAL = {Math. Ann.},
  FJOURNAL = {Mathematische Annalen},
    VOLUME = {372},
      YEAR = {2018},
    NUMBER = {1-2},
     PAGES = {699--741},
      ISSN = {0025-5831},
   MRCLASS = {32W20 (32U25 53C44)},
MRREVIEWER = {Ngoc Cuong Nguyen},
       DOI = {10.1007/s00208-017-1574-7},
       URL = {https://doi.org/10.1007/s00208-017-1574-7},
}

@article {tosatti2010complex,
    AUTHOR = {Tosatti, V. and Weinkove, B.},
     TITLE = {The complex {M}onge-{A}mp\`ere equation on compact {H}ermitian
              manifolds},
   JOURNAL = {J. Amer. Math. Soc.},
  FJOURNAL = {Journal of the American Mathematical Society},
    VOLUME = {23},
      YEAR = {2010},
    NUMBER = {4},
     PAGES = {1187--1195},
      ISSN = {0894-0347},
   MRCLASS = {32W20 (32Q15)},
  MRNUMBER = {2669712},
MRREVIEWER = {S\l awomir Ko\l odziej},
       DOI = {10.1090/S0894-0347-2010-00673-X},
       URL = {https://doi.org/10.1090/S0894-0347-2010-00673-X},
}

@article {tosatti2010estimates,
    AUTHOR = {Tosatti, V. and Weinkove, B.},
     TITLE = {Estimates for the complex {M}onge-{A}mp\`ere equation on
              {H}ermitian and balanced manifolds},
   JOURNAL = {Asian J. Math.},
  FJOURNAL = {Asian Journal of Mathematics},
    VOLUME = {14},
      YEAR = {2010},
    NUMBER = {1},
     PAGES = {19--40},
      ISSN = {1093-6106},
   MRCLASS = {32W20 (32Q25 35J96 35R01)},
  MRNUMBER = {2726593},
MRREVIEWER = {Yanir A. Rubinstein},
       DOI = {10.4310/AJM.2010.v14.n1.a3},
       URL = {https://doi.org/10.4310/AJM.2010.v14.n1.a3},
}

@article {tosatti2015evolution,
    AUTHOR = {Tosatti, V. and Weinkove, B.},
     TITLE = {On the evolution of a {H}ermitian metric by its
              {C}hern-{R}icci form},
   JOURNAL = {J. Differential Geom.},
  FJOURNAL = {Journal of Differential Geometry},
    VOLUME = {99},
      YEAR = {2015},
    NUMBER = {1},
     PAGES = {125--163},
      ISSN = {0022-040X},
   MRCLASS = {53C44 (53C55)},
MRREVIEWER = {Chengjie Yu},
       URL = {http://projecteuclid.org/euclid.jdg/1418345539},
}

@article {yau1978ricci,
    AUTHOR = {Yau, S. T.},
     TITLE = {On the {R}icci curvature of a compact {K}\"{a}hler manifold and
              the complex {M}onge-{A}mp\`ere equation. {I}},
   JOURNAL = {Comm. Pure Appl. Math.},
  FJOURNAL = {Communications on Pure and Applied Mathematics},
    VOLUME = {31},
      YEAR = {1978},
    NUMBER = {3},
     PAGES = {339--411},
      ISSN = {0010-3640},
   MRCLASS = {53C55 (32C10 35J60)},
  MRNUMBER = {480350},
MRREVIEWER = {Robert E. Greene},
       DOI = {10.1002/cpa.3160310304},
       URL = {https://doi.org/10.1002/cpa.3160310304},
}

@article{guedj2021quasi,
   AUTHOR = {Guedj, V. and Lu, C. H.},
     TITLE = {Quasi-plurisubharmonic envelopes 3: {S}olving
              {M}onge-{A}mp\`ere equations on hermitian manifolds},
   JOURNAL = {J. Reine Angew. Math.},
  FJOURNAL = {Journal f\"ur die Reine und Angewandte Mathematik. [Crelle's
              Journal]},
    VOLUME = {800},
      YEAR = {2023},
     PAGES = {259--298},
      ISSN = {0075-4102,1435-5345},
   MRCLASS = {32W20},
  MRNUMBER = {4609828},
MRREVIEWER = {S\l awomir\ Dinew},
       DOI = {10.1515/crelle-2023-0030},
       URL = {https://doi.org/10.1515/crelle-2023-0030},
}

@article{dang2026singularities,
    AUTHOR = {Dang, Q.-T.},
     TITLE = {{Singularities of the Chern-Ricci flow}},
   JOURNAL = {Anal. PDE},
  FJOURNAL = {Analysis \& PDE},
    VOLUME = {19},
      YEAR = {2026},
    NUMBER = {3},
     PAGES = {449--483},
}

@article {sherman-weinkove13-estimates,
    AUTHOR = {Sherman, M. and Weinkove, B.},
     TITLE = {Local {C}alabi and curvature estimates for the {C}hern-{R}icci
              flow},
   JOURNAL = {New York J. Math.},
  FJOURNAL = {New York Journal of Mathematics},
    VOLUME = {19},
      YEAR = {2013},
     PAGES = {565--582},
      ISSN = {1076-9803},
   MRCLASS = {53C44 (53C55)},
  MRNUMBER = {3119098},
MRREVIEWER = {Yanir\ A.\ Rubinstein},
       URL = {http://nyjm.albany.edu:8000/j/2013/19_565.html},
}

@article{guedj2021quasi1,
   AUTHOR = {Guedj, V. and Lu, C. H.},
     TITLE = {Quasi-plurisubharmonic envelopes 1: uniform estimates on
              {K}\"ahler manifolds},
   JOURNAL = {J. Eur. Math. Soc. (JEMS)},
  FJOURNAL = {Journal of the European Mathematical Society (JEMS)},
    VOLUME = {27},
      YEAR = {2025},
    NUMBER = {3},
     PAGES = {1185--1208},
      ISSN = {1435-9855,1435-9863},
   MRCLASS = {32Q15 (32U05 32W20 35A23)},
  MRNUMBER = {4874940},
MRREVIEWER = {Ngoc\ Cuong\ Nguyen},
       DOI = {10.4171/jems/1460},
       URL = {https://doi.org/10.4171/jems/1460},
}

@article {guedj2022quasi,
    AUTHOR = {Guedj, V. and Lu, C. H.},
     TITLE = {Quasi-plurisubharmonic envelopes 2: {B}ounds on
              {M}onge-{A}mp\`ere volumes},
   JOURNAL = {Algebr. Geom.},
  FJOURNAL = {Algebraic Geometry},
    VOLUME = {9},
      YEAR = {2022},
    NUMBER = {6},
     PAGES = {688--713},
      ISSN = {2313-1691},
   MRCLASS = {32W20 (32J18 32U05 35A23 35J96)},
  MRNUMBER = {4518244},
MRREVIEWER = {S\l awomir Dinew},
       DOI = {10.14231/ag-2022-021},
       URL = {https://doi.org/10.14231/ag-2022-021},
}

@article {lu2021stability,
    AUTHOR = {Lu, C. H. and Phung, T.-T. and T\^{o}, T.-D.},
     TITLE = {Stability and {H}\"{o}lder regularity of solutions to complex
              {M}onge-{A}mp\`ere equations on compact {H}ermitian manifolds},
   JOURNAL = {Ann. Inst. Fourier (Grenoble)},
  FJOURNAL = {Universit\'{e} de Grenoble. Annales de l'Institut Fourier},
    VOLUME = {71},
      YEAR = {2021},
    NUMBER = {5},
     PAGES = {2019--2045},
      ISSN = {0373-0956},
   MRCLASS = {32W20 (32Q15 32U05)},
  MRNUMBER = {4398254},
MRREVIEWER = {Rafa\l  Czy\.{z}},
       DOI = {10.5802/aif.3436},
       URL = {https://doi.org/10.5802/aif.3436},
}

@article {ChengXu2025-viscosity-hessian,
    AUTHOR = {Cheng, J. and Xu, Y.},
     TITLE = {Viscosity solution to complex {H}essian equations on compact
              {H}ermitian manifolds},
   JOURNAL = {J. Funct. Anal.},
  FJOURNAL = {Journal of Functional Analysis},
    VOLUME = {289},
      YEAR = {2025},
    NUMBER = {5},
     PAGES = {Paper No. 110936, 52},
      ISSN = {0022-1236,1096-0783},
   MRCLASS = {32W50 (53C55 58J05)},
  MRNUMBER = {4885961},
MRREVIEWER = {S\l awomir\ Dinew},
       DOI = {10.1016/j.jfa.2025.110936},
       URL = {https://doi.org/10.1016/j.jfa.2025.110936},
}

@article{GuoPhongTongWang2021-modulus,
  title={{On the modulus of continuity of solutions to complex
 Monge-Amp\`ere equations}},
  author={Guo, B. and Phong, D. H. and Tong, F. and Wang, C.},
  journal={arXiv:2112.02354},
  year={2021}
}

@article{Fang25-continuity-hessian,
  title={{Continuity of solutions to complex Hessian equations on compact Hermitian manifolds}},
  author={Fang, Y.},
  journal={\href{https://arxiv.org/abs/2510.14690}{arXiv:2510.14690}},
  year={2025}
}

@article {GuedjGuenanciaZeriahi25-diameter,
    AUTHOR = {Guedj, V. and Guenancia, H. and Zeriahi, A.},
     TITLE = {Diameter of {K}\"ahler currents},
   JOURNAL = {J. Reine Angew. Math.},
  FJOURNAL = {Journal f\"ur die Reine und Angewandte Mathematik. [Crelle's
              Journal]},
    VOLUME = {820},
      YEAR = {2025},
     PAGES = {115--152},
      ISSN = {0075-4102,1435-5345},
   MRCLASS = {53C55 (32Q15 32U40 32W20 58A25)},
  MRNUMBER = {4871394},
       DOI = {10.1515/crelle-2024-0092},
       URL = {https://doi.org/10.1515/crelle-2024-0092},
}

@article{BoucksomGuedjLu2025-volume,
  title={{Volumes of Bott–Chern Classes}},
  author={Boucksom, S. and Guedj, V. and Lu, C.H. },
  journal={Peking Math J},
  year={2025}
}

@article{Zeriahi20-continuity,
  title={{Remarks on the modulus of continuity of subharmonic functions}},
  author={Zeriahi, A.},
  journal={\href{https://arxiv.org/abs/2007.08399}{arXiv:2007.08399}},
  year={2020}
}

@article {ChoChoi25-continuity,
    AUTHOR = {Cho, Y.-W. L. and Choi, Y.-J.},
     TITLE = {Continuity of solutions to complex {M}onge--{A}mp\`ere
              equations on compact {K}\"ahler spaces},
   JOURNAL = {Math. Ann.},
  FJOURNAL = {Mathematische Annalen},
    VOLUME = {393},
      YEAR = {2025},
    NUMBER = {1},
     PAGES = {807--830},
      ISSN = {0025-5831,1432-1807},
   MRCLASS = {32W20 (14J17 32Q20 32U20)},
  MRNUMBER = {4966573},
       DOI = {10.1007/s00208-025-03268-6},
       URL = {https://doi.org/10.1007/s00208-025-03268-6},
}

@article {Dinew-Zhang10-stability,
    AUTHOR = {Dinew, S. and Zhang, Z.},
     TITLE = {On stability and continuity of bounded solutions of degenerate
              complex {M}onge-{A}mp\`ere equations over compact {K}\"ahler
              manifolds},
   JOURNAL = {Adv. Math.},
  FJOURNAL = {Advances in Mathematics},
    VOLUME = {225},
      YEAR = {2010},
    NUMBER = {1},
     PAGES = {367--388},
      ISSN = {0001-8708,1090-2082},
   MRCLASS = {32W20 (32Q15)},
  MRNUMBER = {2669357},
MRREVIEWER = {Bianca\ Santoro},
       DOI = {10.1016/j.aim.2010.03.001},
       URL = {https://doi.org/10.1016/j.aim.2010.03.001},
}

@article {WWZ20-estimate,
    AUTHOR = {Wang, J. and Wang, X.-J. and Zhou, B.},
     TITLE = {A priori estimate for the complex {M}onge-{A}mp\`ere equation},
   JOURNAL = {Peking Math. J.},
  FJOURNAL = {Peking Mathematical Journal},
    VOLUME = {4},
      YEAR = {2021},
    NUMBER = {1},
     PAGES = {143--157},
      ISSN = {2096-6075,2524-7182},
   MRCLASS = {32W20 (35J96)},
  MRNUMBER = {4249053},
MRREVIEWER = {Hichame\ Amal},
       DOI = {10.1007/s42543-020-00025-3},
       URL = {https://doi.org/10.1007/s42543-020-00025-3},
}

@article {Kovats99-elliptic,
    AUTHOR = {Kovats, J.},
     TITLE = {{Dini-Campanato spaces and applications to nonlinear elliptic equation}},
   JOURNAL = {Electron. J. Differential Equations},
  FJOURNAL = {Electronic Journal Differential Equations},
      YEAR = {1999},
     PAGES = {p. No. 37, 20pp},
}

@article {Li_yang_2021,
    AUTHOR = {Li, Y.},
     TITLE = {On collapsing {C}alabi-{Y}au fibrations},
   JOURNAL = {J. Differential Geom.},
  FJOURNAL = {Journal of Differential Geometry},
    VOLUME = {117},
      YEAR = {2021},
    NUMBER = {3},
     PAGES = {451--483},
      ISSN = {0022-040X},
   MRCLASS = {53C55 (32Q25 53C25)},
  MRNUMBER = {4255068},
       DOI = {10.4310/jdg/1615487004},
       URL = {https://doi.org/10.4310/jdg/1615487004},
}

@article {dang24-chern,
    AUTHOR = {Dang, Q.-T.},
     TITLE = {Pluripotential {C}hern-{R}icci flows},
   JOURNAL = {Indiana Univ. Math. J.},
  FJOURNAL = {Indiana University Mathematics Journal},
    VOLUME = {73},
      YEAR = {2024},
    NUMBER = {4},
     PAGES = {1401--1441},
      ISSN = {0022-2518,1943-5258},
   MRCLASS = {32J18 (32Uxx 32W20 53E30)},
  MRNUMBER = {4806754},
}

@article{deruelle2025k,
  title={{K\"ahler-Ricci flows coming out of metric spaces}},
  author={Deruelle, A. and Guedj, V. and Guenancia, H. and Zeriahi, A.},
  journal={\href{https://arxiv.org/abs/2511.13473v1}{arXiv:2511.13473}},
  year={2025}
}

@article {Cheng-Xu24-m-subharmonic,
    AUTHOR = {Cheng, J. and Xu, Y.},
     TITLE = {Regularization of {$m$}-subharmonic functions and {H}\"older
              continuity},
   JOURNAL = {Math. Res. Lett.},
  FJOURNAL = {Mathematical Research Letters},
    VOLUME = {31},
      YEAR = {2024},
    NUMBER = {4},
     PAGES = {951--984},
      ISSN = {1073-2780,1945-001X},
   MRCLASS = {32U05 (32W50)},
  MRNUMBER = {4831045},
MRREVIEWER = {Ngoc\ Cuong\ Nguyen},
       DOI = {10.4310/mrl.241118233017},
       URL = {https://doi.org/10.4310/mrl.241118233017},
}

@article {GGZ23-continuity,
    AUTHOR = {Guedj, V. and Guenancia, H. and Zeriahi, A.},
     TITLE = {Continuity of singular {K}\"ahler-{E}instein potentials},
   JOURNAL = {Int. Math. Res. Not. IMRN},
  FJOURNAL = {International Mathematics Research Notices. IMRN},
      YEAR = {2023},
    NUMBER = {2},
     PAGES = {1355--1377},
      ISSN = {1073-7928,1687-0247},
   MRCLASS = {32Q20 (32W20)},
  MRNUMBER = {4537327},
MRREVIEWER = {Soufian\ Abja},
       DOI = {10.1093/imrn/rnab294},
       URL = {https://doi.org/10.1093/imrn/rnab294},
}

@article {WZ24-trace,
    AUTHOR = {Wang, J. and Zhou, B.},
     TITLE = {Trace inequalities, isocapacitary inequalities, and regularity
              of the complex {H}essian equations},
   JOURNAL = {Sci. China Math.},
  FJOURNAL = {Science China. Mathematics},
    VOLUME = {67},
      YEAR = {2024},
    NUMBER = {3},
     PAGES = {557--576},
      ISSN = {1674-7283,1869-1862},
   MRCLASS = {32W20 (32U05)},
  MRNUMBER = {4731277},
MRREVIEWER = {Ngoc\ Cuong\ Nguyen},
       DOI = {10.1007/s11425-022-2100-1},
       URL = {https://doi.org/10.1007/s11425-022-2100-1},
}

@article {guo2022-local,
    AUTHOR = {Guo, B. and Song, J.},
     TITLE = {Local noncollapsing for complex {M}onge-{A}mp\`ere equations},
   JOURNAL = {J. Reine Angew. Math.},
  FJOURNAL = {Journal f\"ur die Reine und Angewandte Mathematik. [Crelle's
              Journal]},
    VOLUME = {793},
      YEAR = {2022},
     PAGES = {225--238},
      ISSN = {0075-4102,1435-5345},
   MRCLASS = {53E20 (53C55)},
  MRNUMBER = {4513167},
MRREVIEWER = {Rare\c s\ R\u asdeaconu},
       DOI = {10.1515/crelle-2022-0069},
       URL = {https://doi.org/10.1515/crelle-2022-0069},
}

@article {Guo2024-diameter2,
    AUTHOR = {Guo, B. and Phong, D. H. and Song, J. and Sturm, J.},
     TITLE = {Diameter estimates in {K}\"ahler geometry {II}: removing the
              small degeneracy assumption},
   JOURNAL = {Math. Z.},
  FJOURNAL = {Mathematische Zeitschrift},
    VOLUME = {308},
      YEAR = {2024},
    NUMBER = {3},
     PAGES = {Paper No. 43, 7},
      ISSN = {0025-5874,1432-1823},
   MRCLASS = {53C55 (53C20 53C21)},
  MRNUMBER = {4805061},
       DOI = {10.1007/s00209-024-03600-x},
       URL = {https://doi.org/10.1007/s00209-024-03600-x},
}

@article {Fu-Guo-Song2020-geometric,
    AUTHOR = {Fu, X. and Guo, B. and Song, J.},
     TITLE = {Geometric estimates for complex {M}onge-{A}mp\`ere equations},
   JOURNAL = {J. Reine Angew. Math.},
  FJOURNAL = {Journal f\"ur die Reine und Angewandte Mathematik. [Crelle's
              Journal]},
    VOLUME = {765},
      YEAR = {2020},
     PAGES = {69--99},
      ISSN = {0075-4102,1435-5345},
   MRCLASS = {53C55 (32W20 35J96 53C25)},
  MRNUMBER = {4129356},
MRREVIEWER = {Sibel\ \c Sahin},
       DOI = {10.1515/crelle-2019-0020},
       URL = {https://doi.org/10.1515/crelle-2019-0020},
}

@article {guo2022-diameter,
    AUTHOR = {Guo, B. and Phong, D. H. and Song, J. and Sturm, J.},
     TITLE = {Diameter estimates in {K}\"ahler geometry},
   JOURNAL = {Comm. Pure Appl. Math.},
  FJOURNAL = {Communications on Pure and Applied Mathematics},
    VOLUME = {77},
      YEAR = {2024},
    NUMBER = {8},
     PAGES = {3520--3556},
      ISSN = {0010-3640,1097-0312},
   MRCLASS = {53C55 (53C20 53C21)},
  MRNUMBER = {4764747},
       DOI = {10.1002/cpa.22196},
       URL = {https://doi.org/10.1002/cpa.22196},
}

@article{do2023log,
 author={Do, H.-S and Vu, D.-V.},
  title={{Log continuity of solutions of complex Monge-Ampere equations}},
  journal={\href{https://arxiv.org/abs/2312.04128}{arXiv:2312.04128}},
  year={2023}
}

@article{vu24-diameter,
 author={Vu, D.-V.},
  title={{ Uniform Diameter and Non-collapsing Estimates for K\"ahler metrics}},
  JOURNAL = {J. Geom. Anal.},
  FJOURNAL = {Journal of Geometric Analysis},
    VOLUME = {36},
      YEAR = {2026},
    NUMBER = {75},
}

@article{nguyen-vu24-diameter,
 author={Nguyen, D.-B. and Vu, D.-V.},
  title={{ Uniform diameter estimates for K\"ahler metrics in big cohomology classes}},
  journal={\href{https://arxiv.org/abs/2410.18532}{arXiv:2410.1853}},
  year={2024}}

@article {GJSS25-cscK,
    AUTHOR = {Guo, B. and Jian, W. and Shi, Y. and Song, J.},
     TITLE = {Csc{K} metrics near the canonical class},
   JOURNAL = {J. Reine Angew. Math.},
  FJOURNAL = {Journal f\"ur die Reine und Angewandte Mathematik. [Crelle's
              Journal]},
    VOLUME = {820},
      YEAR = {2025},
     PAGES = {153--165},
      ISSN = {0075-4102,1435-5345},
   MRCLASS = {32Q15},
  MRNUMBER = {4871395},
MRREVIEWER = {Rare\c s\ R\u asdeaconu},
       DOI = {10.1515/crelle-2024-0096},
       URL = {https://doi.org/10.1515/crelle-2024-0096},
}

@article {dinh2014characterization,
    AUTHOR = {Dinh, T.-C. and Nguy\^{e}n, V.-A.},
     TITLE = {Characterization of {M}onge-{A}mp\`ere measures with {H}\"{o}lder
              continuous potentials},
   JOURNAL = {J. Funct. Anal.},
  FJOURNAL = {Journal of Functional Analysis},
    VOLUME = {266},
      YEAR = {2014},
    NUMBER = {1},
     PAGES = {67--84},
      ISSN = {0022-1236},
   MRCLASS = {32W20 (32Q15 35B65 35J96)},
  MRNUMBER = {3121721},
MRREVIEWER = {Rafa\l  Czy\.{z}},
       DOI = {10.1016/j.jfa.2013.08.026},
       URL = {https://doi.org/10.1016/j.jfa.2013.08.026},
}

@article {Guo-Phong-Sturm24-green,
    AUTHOR = {Guo, B. and Phong, D. H. and Sturm, J.},
     TITLE = {Green's functions and complex {M}onge-{A}mp\`ere equations},
   JOURNAL = {J. Differential Geom.},
  FJOURNAL = {Journal of Differential Geometry},
    VOLUME = {127},
      YEAR = {2024},
    NUMBER = {3},
     PAGES = {1083--1119},
      ISSN = {0022-040X,1945-743X},
   MRCLASS = {53C55 (53C21)},
  MRNUMBER = {4773174},
MRREVIEWER = {Sibel\ \c Sahin},
       DOI = {10.4310/jdg/1721071497},
       URL = {https://doi.org/10.4310/jdg/1721071497},
}

@article {KN16-hessian,
    AUTHOR = {Ko{\l}odziej, S. and Nguyen, N. C.},
     TITLE = {Weak solutions of complex {H}essian equations on compact
              {H}ermitian manifolds},
   JOURNAL = {Compos. Math.},
  FJOURNAL = {Compositio Mathematica},
    VOLUME = {152},
      YEAR = {2016},
    NUMBER = {11},
     PAGES = {2221--2248},
      ISSN = {0010-437X,1570-5846},
   MRCLASS = {32U40 (35D30 35J96 53C55)},
  MRNUMBER = {3577893},
MRREVIEWER = {Dongrui\ Wan},
       DOI = {10.1112/S0010437X16007417},
       URL = {https://doi.org/10.1112/S0010437X16007417},
}

@article {GuNguyen16-hessian,
    AUTHOR = {Gu, D. and Nguyen, N. C.},
     TITLE = {The {D}irichlet problem for a complex {H}essian equation on
              compact {H}ermitian manifolds with boundary},
   JOURNAL = {Ann. Sc. Norm. Super. Pisa Cl. Sci. (5)},
  FJOURNAL = {Annali della Scuola Normale Superiore di Pisa. Classe di
              Scienze. Serie V},
    VOLUME = {18},
      YEAR = {2018},
    NUMBER = {4},
     PAGES = {1189--1248},
      ISSN = {0391-173X,2036-2145},
   MRCLASS = {32W50 (32Q15 32U40 35J96 53C55)},
  MRNUMBER = {3829745},
MRREVIEWER = {Yanir\ A.\ Rubinstein},
}

\end{document}